\newtheorem{Theorem}{Theorem}[section]
\newtheorem{Proposition}[Theorem]{Proposition}
\newtheorem{Lemma}[Theorem]{Lemma}
\newtheorem{Corollary}[Theorem]{Corollary}
\theoremstyle{definition}
\newtheorem{Definition}[Theorem]{Definition}
\newtheorem{Remark}[Theorem]{Remark}
\def\re{{\mathbf {Re\,}}}
\newcommand{\db}{\overline\partial}
\newcommand{\wi}{\widetilde}
\DeclareMathOperator{\ric}{Ric}
\DeclareMathOperator{\codim}{codim}
\DeclareMathOperator{\supp}{supp}
\DeclareMathOperator{\inte}{int}
\newcommand{\cali}[1]{\mathscr{#1}}
\newcommand{\cO}{\cali{O}} 
\newcommand{\cM}{\cali{M}}\newcommand{\cT}{\cali{T}}
\newcommand{\cC}{\cali{C}}\newcommand{\cA}{\cali{A}}
\newcommand{\field}[1]{\mathbb{#1}}
\newcommand{\Z}{\field{Z}}
\newcommand{\R}{\field{R}}
\newcommand{\C}{\field{C}}
\newcommand{\N}{\field{N}}
\newcommand{\comment}[1]{}
\begin{document}

\title[Equidistribution for sequences of line bundles  
on normal K\"ahler spaces]{Equidistribution for sequences
of line bundles  on normal K\"ahler spaces}
\author{Dan Coman}
\thanks{D.\ Coman is partially supported by the 
NSF Grant DMS-1300157}
\address{Department of Mathematics, 
Syracuse University, Syracuse, NY 13244-1150, USA}
\email{dcoman@syr.edu}
\author{Xiaonan Ma}
\address{Institut Universitaire de France \& 
Universit\'e Paris Diderot - Paris 7, UFR de Math\'ematiques, 
Case 7012,
75205 Paris Cedex 13, France}
\email{xiaonan.ma@imj-prg.fr}
\thanks{X.\ Ma is partially supported by Institut Universitaire 
de France}
\author{George Marinescu}
\address{Universit{\"a}t zu K{\"o}ln, Mathematisches Institut, 
Weyertal 86-90, 50931 K{\"o}ln, Deutschland   \& 
Institute of Mathematics `Simion Stoilow', Romanian Academy, 
Bucharest, Romania}
\email{gmarines@math.uni-koeln.de}
\thanks{G.\ Marinescu is partially supported by DFG funded projects 
SFB/TR 12, MA 2469/2-2}
\thanks{Funded through the Institutional Strategy of
the University of Cologne within the German Excellence Initiative}
\subjclass[2010]{Primary 32L10; 
Secondary 32A60, 32C20, 32U40, 81Q50.}
\keywords{Bergman kernel function, Fubini-Study current, 
singular Hermitian metric, compact normal K\"ahler complex space, 
zeros of random holomorphic sections}
\date{January 1, 2016}

\pagestyle{myheadings}

\begin{abstract} 
We study the asymptotics of Fubini-Study currents and 
zeros of random holomorphic sections associated to 
a sequence of singular Hermitian line bundles on a
compact normal K\"ahler complex space.
\end{abstract}

\maketitle
\tableofcontents

\section{Introduction}\label{S:intro}

\par In this paper we continue the study of equidistribution 
of Fubini-Study currents and zeros of sequences of holomorphic
sections of singular Hermitian holomorphic bundles started in 
\cite{CM11,CM13,CM13b}. We generalize our previous results 
in two directions. On the one hand, we allow the base space to 
be singular and work over K\"ahler spaces. On the other hand, 
we consider sequences $(L_p,h_p)$, $p\geq1$, 
of singular Hermitian holomorphic line bundles whose 
Chern curvature satisfy a natural growth condition, 
instead of sequences of powers $(L^p,h^p)$ of a fixed
line bundle $(L,h)$. 

\par Recall that by the results of \cite{Ti90} 
(see also \cite[Section 5.3]{MM07}), if $(X,\omega)$ is 
a compact K\"ahler manifold whose K\"ahler form is integral 
and $(L,h)$ is a prequantum line bundle (i.e.,\ the Chern curvature 
form $c_1(L,h)$ equals $\omega$), then the normalized
Fubini-Study currents $\frac1p\gamma_p$ associated to 
$H^0(X,L^p)$ converge in the $\cC^2$ topology to $\omega$. 
This result can be applied to describe the asymptotic distribution 
of zeros of random sequences of holomorphic sections. 
Indeed, it is shown in \cite{ShZ99} 
(see also \cite{NoVo:98,DS06,ShZ08,Sh08,DMS}) 
that for almost all sequences 
$\{\sigma_p\in H^0(X,L^p)\}_{p\geq1}$ the normalized 
zero-currents $\frac1p[\sigma_p=0]$ converge weakly to 
$\omega$ on $X$.   
This means that the K\"ahler form $\omega$ can be approximated 
by various algebraic/analytic objects in the semiclassical limit 
$p\to\infty$\,. Some important technical tools for higher 
dimension used in the these works were introduced in \cite{FS95}. 
We note that statistics of zeros of sections and hypersurfaces
have been studied also in the context of real manifolds and
real vector bundles, see e.g.\ \cite{GW14,NS14}.

\par In \cite{CM11,CM13,CM13b} we relaxed the smoothness
condition on $\omega$ and assumed that $\omega$ is merely 
an integral K\"ahler current. Then there exists a holomorphic
line bundle $(L,h)$ endowed with a singular Hermitian metric 
such that $c_1(L,h)$ equals $\omega$ as currents. 
We established the above approximation results in the sense 
of currents by working with the spaces of square integrable 
holomorphic sections $H^0_{(2)}(X,L^p)$. 
The setting in \cite{CM11,CM13,CM13b} allows to deal with 
several singular K\"ahler metrics such as singular metrics on 
big line bundles, metrics with Poincar\'e growth, K\"ahler-Einstein 
metrics singular along a divisor (e.\,g.\ good metrics
in the sense of Mumford on toroidal compactifications of 
arithmetic quotiens \cite{AMRT:10}) or metrics on 
orbifold line bundles.

\par We consider in this paper the following setting:

\smallskip

(A) $(X,\omega)$ is a compact (reduced) normal K\"ahler space 
of pure dimension $n$, $X_{\rm reg}$ denotes the set of 
regular points of $X$, and $X_{\rm sing}$ denotes the set 
of singular points of $X$.

\smallskip

(B) $(L_p,h_p)$, $p\geq1$, is a sequence of holomorphic line 
bundles on $X$ with singular Hermitian metrics $h_p$ whose 
curvature currents verify 
\begin{equation}\label{e:pc}
c_1(L_p,h_p)\geq a_p\,\omega \, \text{ on $X$, where 
$a_p>0$ and } \lim_{p\to\infty}a_p=\infty.
\end{equation}
Let $A_p=\int_Xc_1(L_p,h_p)\wedge\omega^{n-1}$. 
If $X_{\rm sing}\neq\emptyset$ we assume moreover that 
\begin{equation}\label{e:domin0}
\exists\,T_0\in\cT(X) \text{ such that } 
c_1(L_p,h_p)\leq A_pT_0\,,\;\forall\,p\geq1\,.
\end{equation}

\smallskip
\par Here we consider currents on the analytic space $X$
in the sense of \cite{D85}, and $\cT(X)$ is the set of positive 
closed currents of bidegree $(1,1)$ on $X$ which have a local 
plurisubharmonic (psh) potential in the neighborhood of each point
of $X$ (see Section \ref{S:prelim}). 

\par We let $H^0_{(2)}(X,L_p)$ be the Bergman space 
of $L^2$-holomorphic sections of $L_p$ relative to the metric 
$h_p$ and the volume form induced by $\omega$ on $X$, 
\begin{equation}\label{e:bs}
H^0_{(2)}(X,L_p)=\left\{S\in H^0(X,L_p):\,
\|S\|_p^2:=\int_{X_{\rm reg}}|S|^2_{h_p}\,\frac{\omega^n}{n!}
<\infty\right\}\,,
\end{equation}
endowed with the obvious inner product. Furthermore, 
we denote by $P_p$, resp. $\gamma_p$, the 
Bergman kernel function, resp. the Fubini-Study current, 
of the space $H^0_{(2)}(X,L_p)$, which are defined as follows. 
For $p\geq1$, let $S_1^p,\dots,S_{d_p}^p$ be an orthonormal 
basis of $H^0_{(2)}(X,L_p)$. If $x\in X$ and $e_p$ is a
local holomorphic frame of $L_p$ in a neighborhood $U_p$ of $x$ 
we write $S_j^p=s_j^pe_p$, where $s_j^p\in\mathcal O_X(U_p)$. 
Then 
\begin{equation}\label{e:BFS1}
P_p(x)=\sum_{j=1}^{d_p}|S^p_j(x)|_{h_p}^2\;,\;\;
\gamma_p\vert_{U_p}=\frac{1}{2}\,dd^c
\log\left(\sum_{j=1}^{d_p}|s_j^p|^2\right)\,,
\end{equation}
where $d^c=\frac{1}{2\pi i}(\partial-\overline\partial)$. 
Note that $P_p,\,\gamma_p$ are independent of the choice 
of basis. 

\par Our main results are the following theorems:

\begin{Theorem}\label{T:mt1} If $(X,\omega)$, $(L_p,h_p)$, 
    $p\geq1$, verify assumptions (A)-(B), then:

\smallskip

\par (i) $\frac{1}{A_p}\,\log P_p\to 0$ as $p\to\infty$,
in $L^1(X,\omega^n)$. 

\smallskip

\par (ii) $\frac{1}{A_p}\,(\gamma_p-c_1(L_p,h_p))\to 0$ as 
$p\to\infty$, in the weak sense of currents on $X$.

\smallskip

Moreover, if $\frac{1}{A_p}\,c_1(L_p,h_p)\to T$
for some positive closed current $T$ of bidegree $(1,1)$ on $X$, 
then $\frac{1}{A_p}\,\gamma_p\to T$ as $p\to\infty$, 
in the weak sense of currents on $X$.
\end{Theorem}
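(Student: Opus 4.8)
\emph{The plan.} I would reduce the two convergence statements, (ii) and the final (``Moreover'') statement, to part (i), which carries all the analytic content. The bridge is a pointwise identity. Fix $x\in X$ and a local holomorphic frame $e_p$ of $L_p$ on a neighborhood $U_p$, and write $|e_p|_{h_p}^2=e^{-2\varphi_p}$; since $c_1(L_p,h_p)=dd^c\varphi_p\geq0$ by \eqref{e:pc}, the weight $\varphi_p$ is plurisubharmonic. With $S_j^p=s_j^pe_p$ we have $P_p=\big(\sum_j|s_j^p|^2\big)e^{-2\varphi_p}$, so that on $U_p$ one has $\frac12\log P_p=\frac12\log\big(\sum_j|s_j^p|^2\big)-\varphi_p$, and applying $\frac12\,dd^c$ yields the global identity
\begin{equation*}
\gamma_p-c_1(L_p,h_p)=\tfrac12\,dd^c\log P_p \quad\text{on }X.
\end{equation*}
Since $dd^c$ is continuous for the weak topology of currents, and $L^1(X,\omega^n)$ convergence implies convergence of currents, once (i) gives $\frac{1}{A_p}\log P_p\to0$ in $L^1(X,\omega^n)$ we obtain $\frac{1}{A_p}(\gamma_p-c_1(L_p,h_p))=\frac12\,dd^c\big(\frac{1}{A_p}\log P_p\big)\to0$ weakly, which is (ii). The final statement then follows at once: under the hypothesis $\frac{1}{A_p}c_1(L_p,h_p)\to T$, writing $\frac{1}{A_p}\gamma_p=\frac{1}{A_p}(\gamma_p-c_1(L_p,h_p))+\frac{1}{A_p}c_1(L_p,h_p)$ and letting $p\to\infty$ gives $\frac{1}{A_p}\gamma_p\to 0+T=T$ weakly.

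\emph{Upper bound in (i).} Because $\{S_j^p\}$ is orthonormal, $\int_{X_{\rm reg}}P_p\,\frac{\omega^n}{n!}=d_p:=\dim H^0_{(2)}(X,L_p)$. Writing $V=\int_X\frac{\omega^n}{n!}$ and using concavity of the logarithm (Jensen's inequality) gives
\begin{equation*}
\frac1V\int_X\log P_p\,\frac{\omega^n}{n!}\leq\log\Big(\frac{d_p}{V}\Big).
\end{equation*}
Under \eqref{e:pc} (and \eqref{e:domin0} when $X_{\rm sing}\neq\emptyset$) one has $A_p\to\infty$, while standard $L^2$ dimension estimates give $\log d_p=o(A_p)$; hence the right-hand side, divided by $A_p$, tends to $0$, so $\frac{1}{A_p}\int_X\log P_p\,\frac{\omega^n}{n!}$ is bounded above by a null sequence.

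\emph{Lower bound in (i); the main obstacle.} Here the hypothesis $a_p\to\infty$ in \eqref{e:pc} is decisive. Using the extremal characterization $P_p(x)=\sup\{|S(x)|_{h_p}^2:\,S\in H^0_{(2)}(X,L_p),\ \norm{S}_p\leq1\}$ together with the Ohsawa--Takegoshi extension theorem (equivalently, H\"ormander's $L^2$-estimate for $\db$), one constructs at each $x\in X_{\rm reg}$ a section peaking at $x$ whose normalized value $|S(x)|_{h_p}^2/\norm{S}_p^2$ is bounded below by a positive quantity $m_p$ with $\frac{1}{A_p}\log m_p\to0$; the very large positivity furnished by $a_p\to\infty$ is what drives the corresponding error $\epsilon_p\to0$, yielding $\frac{1}{A_p}\log P_p\geq-\epsilon_p$. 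I expect this lower bound to be the hard part, for two reasons intrinsic to the singular setting: the $L^2$-extension machinery must be carried out on $X_{\rm reg}$ (or on a resolution), with normality of $X$, hence $\codim X_{\rm sing}\geq2$, guaranteeing that $L^2$ holomorphic sections extend across $X_{\rm sing}$ and that $H^0_{(2)}(X,L_p)$ is well behaved; and one must prevent anomalous concentration of $\log P_p$ near $X_{\rm sing}$, which is precisely the role of the domination hypothesis \eqref{e:domin0}, bounding $\frac{1}{A_p}c_1(L_p,h_p)$ by the fixed current $T_0\in\cT(X)$ so that the normalized weights do not blow up pathologically on a neighborhood of the singular locus. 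Finally I would combine the two bounds: setting $f_p=\frac{1}{A_p}\log P_p+\epsilon_p\geq0$, the upper estimate forces $\int_Xf_p\,\frac{\omega^n}{n!}\to0$, so $f_p\to0$ in $L^1(X,\omega^n)$ and hence $\frac{1}{A_p}\log P_p\to0$ in $L^1(X,\omega^n)$, which is (i).
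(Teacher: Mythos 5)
Your reduction of (ii) and of the ``Moreover'' statement to (i) --- via the identity $\gamma_p-c_1(L_p,h_p)=\frac12\,dd^c\log P_p$ of \eqref{e:BFS2} and the weak continuity of $dd^c$ --- is correct and coincides with the paper's argument; the gaps are both in your proof of (i). The first concerns the upper bound: the assertion that ``standard $L^2$ dimension estimates give $\log d_p=o(A_p)$'' is not justified, and in the generality of (A)--(B) it is not standard at all. The paper itself treats the analogous condition \eqref{e:logdp} as an \emph{additional hypothesis} in Theorem \ref{T:a2}, and verifies it only under extra assumptions: $X$ smooth together with equicontinuity of the normalized weights (Proposition \ref{P:logdp1}), or $X$ smooth and each $L_p$ semi-ample (Proposition \ref{P:logdp2}, which rests on the Koll\'ar--Matsusaka theorem \cite{KolMat}). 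Here the $L_p$ are genuinely different bundles with singular metrics on a possibly singular space, so making your claim rigorous would require at least: the isometry of Lemma \ref{L:bPp} to pass to the resolution, projectivity of $\widetilde X$ (Proposition \ref{P:Moishezon}), a Siegel/Matsusaka-type bound for $h^0$ of an effective line bundle in terms of its mass against a fixed ample class, and the domination \eqref{e:domin0} to bound that mass by a multiple of $A_p$ (this is exactly \eqref{e:domin}). That is a substantive missing argument, not a citation.

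The second gap is structural and more serious: your scheme hinges on a pointwise lower bound $\frac1{A_p}\log P_p\geq-\epsilon_p$ a.e.\ on $X$ with $\epsilon_p\to0$ \emph{uniformly up to $X_{\rm sing}$}, since otherwise $f_p=\frac1{A_p}\log P_p+\epsilon_p$ is not nonnegative and the trick $\|f_p\|_{L^1}=\int_X f_p\,\omega^n\to0$ collapses. The Ohsawa--Takegoshi/Demailly construction cannot deliver this uniformity. To obtain strict positivity with respect to a K\"ahler form $\Omega$ on the resolution $\widetilde X$ (needed to apply Theorem \ref{T:db}) one must twist by $\mathcal O_{\widetilde X}(-\Sigma)^{b_p}$ as in Lemma \ref{L:tildehp}, and the twisted metrics $\widetilde h_p$ compare with $\pi^* h_p$ only up to factors $\beta_{\widetilde U}^{b_p}$, where $\beta_{\widetilde U}\to\infty$ as $\widetilde U\Subset\widetilde X\setminus\Sigma$ approaches $\Sigma$ (the weight of $\theta$ tends to $-\infty$ along $\Sigma$ by Lemma \ref{L:divm}). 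Consequently the lower bound \eqref{e:lest}, \eqref{e:Bkf} holds with a constant $c=c(U)$ depending on the compact set $U\Subset X_{\rm reg}$, which yields only $L^1_{loc}(X_{\rm reg},\omega^n)$ convergence; this is precisely why the paper needs its separate Step 2, proving the integrated estimate $\frac1{A_p}\int_K|\log P_p|\,\omega^n\to0$ on a compact neighborhood $K$ of $X_{\rm sing}$ by a potential-theoretic argument on $\widetilde X$ (mass bound \eqref{e:domin}, local uniform upper-boundedness and $L^1_{loc}$-compactness of the normalized potentials across $\Sigma$, following \cite{CM13}). You correctly identify \eqref{e:domin0} as the hypothesis preventing pathologies near $X_{\rm sing}$, but invoking it is not the same as supplying this argument; as written, your plan cannot be completed in the singular case. (On a smooth $X$, where all constants are uniform, your scheme would give an alternative and rather elegant proof of Theorem \ref{T:mt2}, modulo the dimension bound discussed above.)
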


\par When $X$ is smooth we obtain:

\begin{Theorem}\label{T:mt2} Let $(X,\omega)$ be a 
    compact K\"ahler manifold of dimension $n$ and $(L_p,h_p)$,
    $p\geq1$, be a sequence of singular Hermitian holomorphic 
    line bundles on $X$ which satisfy  
    $c_1(L_p,h_p)\geq a_p\,\omega$, where $a_p>0$ and 
    $\lim_{p\to\infty}a_p=\infty$\,.
    If $A_p=\int_Xc_1(L_p,h_p)\wedge\omega^{n-1}$ then 
    $\frac{1}{A_p}\,\log P_p\to 0$ in $L^1(X,\omega^n)$ 
    and $\frac{1}{A_p}\,(\gamma_p-c_1(L_p,h_p))\to 0$ weakly 
    on $X$, as $p\to\infty$\,.
\end{Theorem}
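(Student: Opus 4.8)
The plan is to prove Theorem \ref{T:mt2} as a special case of the machinery developed for Theorem \ref{T:mt1}, since a smooth compact Kähler manifold is in particular a compact normal Kähler space with $X_{\rm sing}=\emptyset$, and assumption \eqref{e:domin0} is vacuous in that case. Thus the statement should follow directly from Theorem \ref{T:mt1}(i)--(ii). The real content, however, is the estimate in part (i), so let me sketch how I would establish $\frac{1}{A_p}\log P_p\to 0$ in $L^1$ and then deduce (ii).

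The key quantity to control is the Bergman kernel function $P_p$. The natural strategy is a two-sided estimate. For the \emph{upper bound}, I would use the extremal characterization $P_p(x)=\sup\{|S(x)|^2_{h_p}:S\in H^0_{(2)}(X,L_p),\,\|S\|_p\le 1\}$ together with a submean-value / Bochner--Kodaira type argument: since $c_1(L_p,h_p)\ge a_p\omega$ and $a_p\to\infty$, the local weights are strictly plurisubharmonic with fast-growing curvature, and a local $L^2$ estimate gives $\log P_p \le \varepsilon A_p$ outside a small set, after normalizing. The global integral $\int_X \log P_p\,\frac{\omega^n}{n!}$ can be bounded above by controlling $\sum_j\|S_j^p\|^2=d_p$ and using Jensen's inequality, so the upper bound in $L^1$ reduces to an upper bound on $\frac{1}{A_p}\log d_p$, i.e.\ on the dimension $d_p=\dim H^0_{(2)}(X,L_p)$. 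The \emph{lower bound} is the delicate half: one must show $P_p$ is not too small, i.e.\ that enough $L^2$ sections exist at each point. Here I would invoke Hörmander--Demailly $\overline\partial$-estimates (the $L^2$-existence theorem) to solve $\overline\partial u = \overline\partial(\chi\, e_p)$ for a local peak section, with the weight $h_p$ whose curvature is $\ge a_p\omega$; the positivity $a_p\to\infty$ guarantees solvability with controlled $L^2$-norm and yields a holomorphic section peaking at $x$, giving a pointwise lower bound $\log P_p(x)\ge -C - (\text{local weight})$. Integrating, $\frac{1}{A_p}\int_X\log^-P_p\,\omega^n\to 0$.

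The crucial normalization is to understand the growth of $A_p=\int_X c_1(L_p,h_p)\wedge\omega^{n-1}$ relative to $a_p$ and $d_p$; since $c_1(L_p,h_p)\ge a_p\omega$ one has $A_p\ge a_p\int_X\omega^n>0$, and the dimension estimate $d_p=O(A_p^n)$ (or at least $\log d_p=o(A_p)$) follows from holomorphic Morse inequalities or a volume comparison, which is exactly what makes $\frac{1}{A_p}\log P_p\to 0$ work. Once part (i) is in hand, part (ii) is obtained by the standard local potential-theoretic argument: on a trivializing chart write $\gamma_p-c_1(L_p,h_p)=\frac{1}{2}dd^c\log P_p$ (comparing the Fubini--Study potential $\log\sum|s_j^p|^2$ with the local weight $\varphi_p$ of $h_p$, so that $\log\sum|S_j^p|^2_{h_p}=\log P_p$ globally), and since $dd^c$ is continuous for the weak topology on currents, the $L^1$-convergence $\frac{1}{A_p}\log P_p\to 0$ immediately gives $\frac{1}{A_p}(\gamma_p-c_1(L_p,h_p))\to 0$ weakly.

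The main obstacle I anticipate is the lower bound in (i), specifically producing peak sections with uniformly controlled $L^2$ norms \emph{uniformly in $p$} despite the varying metrics $h_p$ and the merely lower-bounded (not pinned-down) curvature. The difficulty is that the standard Bergman kernel off-diagonal expansions and peak-section constructions are tailored to powers $L^p$ of a \emph{fixed} bundle with smooth metric; here the metrics $h_p$ are singular and genuinely vary, so I expect the real work is in setting up a \emph{local} $\overline\partial$-estimate whose constants depend only on the lower curvature bound $a_p\omega$ and a fixed reference geometry, then checking that the resulting error terms are $o(A_p)$ after integration. Demailly's regularization of the singular weights $\varphi_p$ (to reduce to smooth strictly psh weights while preserving the curvature lower bound up to $o(A_p)$) will likely be the technical device that makes the argument go through, and handling the singular locus of $h_p$ — where $\log P_p$ could a priori blow up — via the domination hypothesis is the point that requires the most care, though in the smooth-base case of Theorem \ref{T:mt2} this is lightened considerably since only the bundle metrics, not the base, are singular.
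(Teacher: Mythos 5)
Your opening reduction is correct and is in substance the paper's own proof: a compact K\"ahler manifold is a compact normal K\"ahler space with $X_{\rm sing}=\emptyset$, so hypothesis \eqref{e:domin0} is vacuous and Theorem \ref{T:mt1} applies verbatim; equivalently, as the paper itself puts it, one repeats Step 1 of the proof of Theorem \ref{T:mt1} working directly on $(X,L_p,h_p)$ with the K\"ahler form $\omega$, with no resolution of singularities, no Lemma \ref{L:tildehp}, and no Step 2. Your deduction of the weak convergence of $\frac{1}{A_p}(\gamma_p-c_1(L_p,h_p))$ from the $L^1$ convergence of $\frac{1}{A_p}\log P_p$ via \eqref{e:BFS2} is also exactly the paper's. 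Had you stopped there, the proof would be complete.

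The sketch you then offer of ``the real content,'' however, deviates from the paper's argument at two points where it would break down. First, your upper bound runs through Jensen's inequality and the dimension bound $\log d_p=o(A_p)$. That bound is not supplied by what you cite: holomorphic Morse inequalities concern powers of a fixed bundle, and a volume comparison needs a Riemann--Roch/Koll\'ar--Matsusaka input requiring extra positivity. Indeed the paper treats $\liminf \log d_p/A_p=0$ as an \emph{additional hypothesis} \eqref{e:logdp} in Theorem \ref{T:a2}, verified only under extra assumptions (Propositions \ref{P:logdp1} and \ref{P:logdp2}), so it cannot be taken for granted here. The paper's actual upper bound is local and dimension-free: $\log P_p(z)\le\log(C'r^{-2n})+2\bigl(\max_{B(z,r)}\psi_p-\psi_p(z)\bigr)$, where the local potentials $\psi_p$ of $c_1(L_p,h_p)$ are chosen by \cite[Proposition A.16]{DS10} so that $\{\psi_p/A_p\}$ is relatively compact in $L^1_{loc}$; the oscillation term is then handled by Hartogs' lemma and generalized dominated convergence. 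Second, and more seriously, your lower bound solves $\overline\partial u=\overline\partial(\chi e_p)$ with $e_p$ a local frame. For a genuinely singular $h_p$ this fails at the start: $|e_p|^2_{h_p}=e^{-2\psi_p}$ need not be locally integrable (take $\psi_p$ with a logarithmic pole of coefficient $\ge n$), so neither $\chi e_p$ nor the data $\overline\partial(\chi e_p)$ lies in $L^2$, and there is nothing to solve. This is precisely the difficulty created by the singular weights, and the paper's device for it is the Ohsawa--Takegoshi extension theorem: extend from the point $x$ (where $\psi_p(x)>-\infty$) to get a local holomorphic section whose $L^2$ norm is controlled independently of the singularities, and only then cut off and apply Demailly's estimate (Theorem \ref{T:db}), which holds for singular metrics as is---no regularization is needed or used. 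Your proposed substitute, Demailly regularization ``preserving the curvature lower bound up to $o(A_p)$,'' does not deliver this: global regularization loses positivity proportionally to the Lelong numbers of $c_1(L_p,h_p)$, which are only $O(A_p)$ and can dwarf $a_p$. Relatedly, the bound you state, $\log P_p(x)\ge -C-(\text{local weight})$, is too weak even if established: since $\|\psi_p\|_{L^1}$ is of order $A_p$, integrating and dividing by $A_p$ yields $O(1)$, not $o(1)$. What the Ohsawa--Takegoshi route gives, and what the argument needs, is the uniform bound $P_p(x)\ge c>0$ at a.e.\ $x$ for $p$ large, i.e.\ a lower bound with no weight term at all.
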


\par If $(L_p,h_p)=(L^p,h^p)$, where $(L,h)$ is a fixed singular 
Hermitian holomorphic line bundle, we obtain analogues of the 
equidistributions results from \cite{ShZ99,DS06,CM11,CM13,CM13b}
for compact normal K\"ahler spaces. Note that in this case 
hypothesis \eqref{e:domin0} is automatically verified as 
$c_1(L^p,h^p)=p\,c_1(L,h)$, so we can take $A_p=p$ and 
$T_0=c_1(L,h)$.  

\par In the case $(L_p,h_p)=(L^p,h^p)$, Theorem \ref{T:mt1} 
gives semiclassical approximation results for integral K\"ahler 
currents. In order to deal with the non-integral case, we consider
those currents $T$ which can be approximated by the curvatures 
of a sequence $(L_p,h_p)$, $p\geq1$, of singular Hermitian 
holomorphic line bundles. Such a sequence can be thought as a 
``prequantization'' of the non-integral positive closed 
$(1,1)$ current $T$. Theorems \ref{T:mt1}-\ref{T:mt2} 
and their consequences are a manifestation of the quantum 
ergodicity in this context. For details, see 
Sections \ref{S:zeros}, \ref{S:appl}, and Theorem \ref{T:approx}. 
Another interesting situation is when $L_p$ equals a product of
tensor powers of several holomorphic line bundles, 
$L_p=F_1^{m_{1,p}}\otimes\ldots\otimes F_k^{m_{k,p}}$, 
where $\{m_{j,p}\}_p$\,, $1\leq j\leq k$, are sequences in $\N$ 
such that $m_{j,p}=r_j\,p+o(p)$ as $p\to\infty$, where $r_j>0$ 
are given. This means that $(m_{1,p},\ldots,m_{k,p})\in\N^k$ 
approximate the semiclassical ray 
$\R_{>0}\cdot(r_1,\ldots,r_k)\in\R_{>0}^k$ with a remainder 
$o(p)$, as $p\to\infty$. For details see Corollary \ref{C:prod}.


\smallskip

\par We consider further the situation when the metrics $h_p$ on 
the bundles $L_p$ are smooth. In the case of powers $(L_p,h_p)=(L^p,h^p)$ of an ample line bundle $(L,h)$, the first order asymptotics of the Bergman kernel function was showed by Tian \cite{Ti90}. A simple proof for this was given by Berndtsson in \cite[Section 2]{Be03}. Adapting his methods to our situation we prove the following:
\begin{Theorem}\label{T:mt3}
Let $(X,\omega)$ be a compact K\"ahler manifold of dimension $n$. 
Let $(L_p,h_p)$, $p\geq1$, be a sequence of holomorphic 
line bundles on $X$ with Hermitian metrics $h_p$ of class $\cC^3$ 
whose curvature forms verify \eqref{e:pc} and such that 
\begin{equation}\label{e:3d}
\varepsilon_p:=\|h_p\|_3^{1/3}a_p^{-1/2}\to0\;
\text{ as }p\to\infty\,.
\end{equation}
Then there exist $C>0$ depending only on $(X,\omega)$ 
and $p_0\in\mathbb N$ such that  
\begin{equation}\label{e:Bexp00}
\left|P_p(x)\,\frac{\omega^n_x}{c_1(L_p,h_p)^n_x}-1\right|\leq C\varepsilon_p^{2/3}
\end{equation}
holds for every $x\in X$ and $p>p_0$. 
\end{Theorem}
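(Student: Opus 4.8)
The plan is to exploit the variational characterization of the Bergman kernel function,
\[
P_p(x)=\max\bigl\{|S(x)|^2_{h_p}:S\in H^0_{(2)}(X,L_p),\ \|S\|_p\le1\bigr\},
\]
and to prove matching upper and lower bounds for $P_p(x)\,\omega^n_x/c_1(L_p,h_p)^n_x$ by comparing the local geometry near each $x$ with the flat Bargmann--Fock model, following the scheme of \cite{Be03}. First I would fix $x$, choose holomorphic coordinates centered at $x$ that are orthonormal for $\omega$ at $x$ and that diagonalize the curvature there, together with a local holomorphic frame $e_p$ of $L_p$, writing $|e_p|^2_{h_p}=e^{-2\varphi_p}$. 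After multiplying $e_p$ by the exponential of a holomorphic polynomial I may assume the frame is normal at $x$, so that $\varphi_p(0)=0$, $d\varphi_p(0)=0$ and the holomorphic part of the $2$-jet of $\varphi_p$ vanishes, giving
\[
\varphi_p(z)=\sum_j\lambda_j\,|z_j|^2+R_p(z),\qquad \lambda_j\ge\tfrac12 a_p,\quad |R_p(z)|\le C\|h_p\|_3\,|z|^3,
\]
where the bound on the cubic remainder is exactly what the hypothesis \eqref{e:3d} controls. The natural length scale is $a_p^{-1/2}$, so I would work on a ball $|z|\le r$ with $r=t\,a_p^{-1/2}$; equivalently, rescaling $w=a_p^{1/2}z$ turns the model weight into $\sum_j\mu_j|w_j|^2$ with $\mu_j\sim1$ and bounds the remainder on $|w|\le t$ by $\|h_p\|_3\,a_p^{-3/2}|w|^3=\varepsilon_p^3|w|^3\le(\varepsilon_p t)^3$.

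For the lower bound I would construct a peak section by the Ho\"rmander/Ohsawa--Takegoshi technique. Let $\chi$ be a cutoff equal to $1$ on $|z|\le r/2$ and supported in $|z|\le r$, and set $\alpha=\chi\,e_p$. Solving $\db\beta=\db\alpha=(\db\chi)e_p$ with the $L^2$ estimate furnished by the curvature positivity \eqref{e:pc} gives $\|\beta\|_p^2\le C a_p^{-1}\int|(\db\chi)e_p|^2_{h_p}\,dV$, which is exponentially small compared with $\|\alpha\|_p^2$, since $\db\chi$ is supported where the Gaussian weight has decayed by $e^{-c\,a_p r^2}=e^{-ct^2}$. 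Adding the singular weight $2n\log|z|$ to the estimate forces the correction to vanish at $x$, so that $S=\alpha-\beta$ is holomorphic with $|S(x)|^2_{h_p}=e^{-2\varphi_p(0)}=1$ and $\|S\|_p\le\|\alpha\|_p+\|\beta\|_p$. Evaluating the Gaussian integral identifies the leading term of $\|\alpha\|_p^2$ with $\omega^n_x/c_1(L_p,h_p)^n_x$ in the normalization of \eqref{e:BFS1}, whence the variational principle yields the matching lower bound for $P_p(x)\,\omega^n_x/c_1(L_p,h_p)^n_x$ up to the model-comparison error.

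For the upper bound I would take an arbitrary $S$ with $\|S\|_p\le1$, write $S=s\,e_p$ locally, and use that $|s|^2$ is plurisubharmonic. The point is to upgrade the crude sub-mean value inequality over a ball into the reproducing inequality for the Bargmann--Fock space attached to $\sum_j\lambda_j|z_j|^2$: integrating the sub-mean value property against a radial profile that reconstructs the model Gaussian on $|z|\le r$ yields
\[
|s(0)|^2\le\kappa_p\int_{|z|\le r}|s|^2\,e^{-2\sum_j\lambda_j|z_j|^2}\,dV\cdot\bigl(1+O(e^{-ct^2})\bigr),
\]
where the model reproducing constant $\kappa_p$ equals $c_1(L_p,h_p)^n_x/\omega^n_x$ in the normalization of \eqref{e:BFS1}. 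Bounding $e^{-2\sum_j\lambda_j|z_j|^2}\le e^{-2\varphi_p}\,e^{2C\|h_p\|_3 r^3}$ on the ball turns the right-hand side into $\kappa_p\,e^{2C\|h_p\|_3 r^3}\|S\|_p^2$ after multiplying by $e^{-2\varphi_p(0)}=1$, so that $P_p(x)\,\omega^n_x/c_1(L_p,h_p)^n_x\le e^{2C\|h_p\|_3 r^3}\bigl(1+O(e^{-ct^2})\bigr)$.

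It then remains to choose the scale $t$ to balance the two competing errors, the cubic mismatch $\|h_p\|_3 r^3=(\varepsilon_p t)^3$ and the cutoff error $e^{-ct^2}$, and to carry out this optimization uniformly in $x$ and $p$ so as to obtain the rate $\varepsilon_p^{2/3}$ in \eqref{e:Bexp00}. I expect the main obstacle to be the sharpness and uniformity of the upper bound: unlike the lower bound, where one exhibits a single well-chosen section, here one must control \emph{every} unit section, and the Bargmann--Fock comparison is delicate because the quadratic weight $\sum_j\lambda_j|z_j|^2$ underestimates $\varphi_p$ far from $x$, so the contribution of $|s|^2$ from outside the ball cannot be discarded for free. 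Confining the argument to $|z|\le r$ while keeping the prefactor equal to $c_1(L_p,h_p)^n_x/\omega^n_x$ up to the stated error, with all constants depending only on $(X,\omega)$ and not on the individual bundles $(L_p,h_p)$, is the heart of the matter; the $\cC^3$ hypothesis enters precisely to render the cubic remainder, and hence this error, explicit and uniform.
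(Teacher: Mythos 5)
Your proposal is correct, and its skeleton is the same Berndtsson-style argument the paper uses: your local normalization is exactly the paper's Lemma \ref{L:rc} (K\"ahler coordinates plus a frame whose weight is $\sum_j\lambda_j|z_j|^2$ up to a remainder bounded by $C\|h_p\|_3|z|^3$), your upper bound is the paper's sub-averaging inequality against the truncated Gaussian (its \eqref{e:bk1}--\eqref{e:bk2}), and your lower bound is a cutoff section corrected by solving a $\db$-equation via the $L^2$ estimate of Theorem \ref{T:db}, at the same scale $r=t\,a_p^{-1/2}$. The one genuine divergence is how the correction is tamed at $x$. The paper solves $\db G=\db(\chi_p e_p)$ with the plain weight, so $G(x)$ need not vanish; it notes that $G$ is holomorphic on the polydisk where $\chi_p\equiv1$ and applies the upper-bound estimate \eqref{e:bk1} to $G$ itself, and since it bounds $|\db\chi_p|$ crudely by $\|\db\chi\|/r_p$ without using where $\db\chi_p$ is supported, its correction error is of size $1/(r_p\sqrt{a_p})=1/t$; balancing this against the volume-comparison error $t^2/a_p$ is what dictates $t=\varepsilon_p^{-2/3}$ and the rate $\varepsilon_p^{2/3}$. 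You instead kill the correction at $x$ by adding the singular weight $2n\log|z|$ (legitimate here: globalizing it with a cutoff costs a curvature loss $O(1/r^2)=O(a_p/t^2)$, absorbed by \eqref{e:pc} once $t$ exceeds a dimensional constant), and you exploit that $\db\chi$ lives in the annulus where the Gaussian has already decayed, making the correction $O(e^{-ct^2})$ rather than $O(1/t)$. That is genuinely sharper: your surviving errors are $(\varepsilon_pt)^3$, $e^{-ct^2}$, and one term your final balance omits, namely the comparison of $dm$ with $\omega^n/n!$, of size $O(r^2)=O(t^2/a_p)\le O(\varepsilon_p^2t^2)$ because $\|h_p\|_3\ge1$; with $t=\varepsilon_p^{-2/3}$ (or even $t\sim\sqrt{\log(1/\varepsilon_p)}$) all of these are $O(\varepsilon_p^{2/3})$, in fact better, so the stated estimate follows with constants depending only on the reference cover.

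Two small corrections to your commentary. First, your worry about the upper bound is unfounded: the weighted sub-mean value inequality is used only on $\{|z|\le r\}$, restricting the $L^2$ norm of $S$ to the ball only decreases it, and truncating the Gaussian in the denominator costs precisely the factor $(1-e^{-2t^2})^{-n}$ you already budgeted, so no control of $|s|^2$ outside the ball is ever needed; this step is the least delicate part of the proof. Second, after rescaling, the eigenvalues $\mu_j=\lambda_j/a_p$ are only bounded \emph{below} by $1$ (they can be as large as $\sim\|h_p\|_3/a_p$), so the claim ``$\mu_j\sim1$'' should not be relied on; this is harmless, since every Gaussian factor in the argument is estimated using only $\lambda_j\ge a_p$ coordinate by coordinate.
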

\par Here $\|h_p\|_3$ denotes the sup-norm of the derivatives of $h_p$ of order at most three with respect to a reference cover of $X$ as defined in Section \ref{SS:refcov}. 
Theorem \ref{T:mt3} is a generalization of the first order 
asymptotic expansion of the Bergman kernel
\cite{Be03,Ca99,DLM04a,MM07,MM08,Ru98,Ti90,Z98}
for $(L_p,h_p)=(L^p,h^p)$, 
where $(L,h)$ is a positive line bundle with smooth metric $h$
(see Remark \ref{R:Bka}).
\par The paper is organized as follows. In Section \ref{S:prelim} 
we collect the necessary technical facts about complex spaces, 
plurisubharmonic functions, currents and singular Hermitian metrics 
on holomorphic line bundles. Section \ref{S:pfmt} is devoted to the 
proofs of Theorems \ref{T:mt1}, \ref{T:mt2}, \ref{T:mt3}.
In Section \ref{S:zeros} we give some applications of these
theorems to the equidistribution of zeros of random sequences 
of holomorphic sections and to the approximation of certain positive 
closed currents on $X$ by currents of integration along zeros of 
holomorphic sections. In Section \ref{S:appl} we specialize these 
results to the cases when $(L_p,h_p)$ are the powers of a single 
line bundle, or tensor products of powers of several line bundles. 
We also show that some interesting cases like Satake-Baily-Borel 
compactifications, 
singular K\"ahler-Einstein metrics on varieties of general type 
or K\"ahler metrics with conical singularities 
fit into this framework.

\section{Preliminaries}\label{S:prelim} 

\par We recall here a few notions and results that will be 
needed throughout the paper. 

\subsection{Plurisubharmonic functions and currents on analytic 
spaces}\label{SS:psh} 
Let $X$ be a complex space. A chart $(U,\tau,V)$ on $X$ is a 
triple consisting of an open set $U\subset X$, a closed complex 
space $V\subset G\subset\C^N$ in an open set $G$ of $\C^N$ 
and a biholomorphic map $\tau:U\to V$ (in the category of complex 
spaces). The map $\tau:U\to G\subset\C^N$ is called a local 
embedding of the complex space $X$. We write
\[X=X_{\rm reg}\cup X_{\rm sing}\,,\]
where $X_{\rm reg}$ (resp.\ $X_{\rm sing}$) is the set of 
regular (resp.\ singular) points of $X$. 
Recall that a reduced complex space $(X,\cO)$ is called normal
if for every $x\in X$ the local ring $\cO_x$ is integrally closed in 
its quotient field $\cM_x$. Every normal complex space is 
locally irreducible and locally pure dimensional, 
cf.\ \cite[p.\,125]{GR84}, $X_{\rm sing}$ is a closed complex 
subspace of $X$ with $\codim X_{\rm sing}\geq2$. Moreover, 
Riemann's second extension theorem holds on normal complex 
spaces \cite[p.\,143]{GR84}. In particular, every holomorphic 
function on $X_{\rm reg}$ extends uniquely to a holomorphic 
function on $X$.   

Let $X$ be a complex space. A continuous (resp.\ smooth) function 
on $X$ is a function $\varphi:X\to\C$ such that for every 
$x\in X$ there exists a local embedding $\tau:U\to G\subset\C^N$ 
with $x\in U$ and a continuous (resp.\ smooth) function 
$\widetilde\varphi:G\to\C$ such that 
$\varphi|_U=\widetilde\varphi\circ\tau$. 
  
A \emph{(strictly) plurisubharmonic (psh)} function on $X$ 
is a function $\varphi:X\to[-\infty,\infty)$ such that for every 
$x\in X$ there exists a local embedding $\tau:U\to G\subset\C^N$ 
with $x\in U$ and a (strictly) psh function 
$\widetilde\varphi:G\to[-\infty,\infty)$ such that 
$\varphi|_U=\widetilde\varphi\circ\tau$. If $\widetilde\varphi$ 
can be chosen continuous (resp.\ smooth), then $\varphi$ is 
called a continuous (resp.\ smooth) psh function. 
The definition is independent of the chart, as is seen from 
\cite[Lemma\,4]{Nar62}. 
It is clear that a continuous psh function is continuous; 
by a theorem of Richberg \cite{Ric68} the converse also holds true,
i.e.,\ a continuous function which is (strictly) psh is also continuous 
(strictly) psh. The analogue of Riemann's second extension theorem 
for psh functions holds on normal complex spaces 
\cite[Satz\,4]{GR56}. In particular, every psh function on 
$X_{\rm reg}$ extends uniquely to a psh function on $X$. 
We let $PSH(X)$ denote the set of psh functions on $X$, 
and refer to \cite{GR56}, \cite{Nar62}, \cite{FN80}, \cite{D85} 
for the properties of psh functions on $X$. We recall here that 
psh functions on $X$ are locally integrable with respect to the 
area measure on $X$ given by any local embedding 
$\tau:U\to G\subset\C^N$ \cite[Proposition 1.8]{D85}.

Let $X$ be a complex space of pure dimension $n$. 
We consider currents on $X$ as defined in \cite{D85}.
The sheaf of smooth $(p,q)$-forms on $X$ is defined at first locally. 
Let $\tau:U\to G\subset\C^N$ be a local embedding. We define 
$\Omega^{p,q}(U)$ to be the image of the morphism
$\tau^*:\Omega^{p,q}(G)\to\Omega^{p,q}(U_{\rm reg})$. 
It can be easily seen that there exists a sheaf $\Omega^{p,q}$ 
on $X$ whose space of sections on any domain $U$ of local 
embedding is $\Omega^{p,q}(U)$. Let 
$\mathcal D^{p,q}(X)\subset\Omega^{p,q}(X)$ be the space
of forms with compact support, endowed with the inductive limit
topology. The dual $\mathcal D'_{p,q}(X)$ of 
$\mathcal D_{p,q}(X)$ is the space of currents of bidimension 
$(p,q)$, or bidegree $(n-p,n-q)$, on $X$.     
In particular, if $v\in PSH(X)$ then 
$dd^cv\in\mathcal D'_{n-1,n-1}(X)$ is positive and closed. 

\par Let $\cT(X)$ be the space of positive closed currents 
of bidegree $(1,1)$ on $X$ which have local psh potentials: 
$T\in\cT(X)$ if every $x\in X$ has a neighborhood $U$
(depending on $T$) such that there exists a psh function $v$ on 
$U$ with $T=dd^cv$ on $U\cap X_{\rm reg}$. 
Most of the currents considered here, like the curvature currents 
$c_1(L_p,h_p)$ and the Fubini-Study currents $\gamma_p$ from 
Theorem \ref{T:mt1}, belong to $\cT(X)$. Suppose now that 
$Y$ is a normal analytic space, $f:Y\longrightarrow X$ is 
a holomorphic map, and $T\in\cT(X)$ is such that if $v$ is a local 
psh potential of $T$ then $v\circ f$ is not identically $-\infty$ 
on an open set of $Y$. Then the pull-back $f^* T\in\cT(Y)$ is 
a well-defined current whose local psh potentials are $v\circ f$. 
Some interesting open questions that we will not pursue here are 
the following: Does every positive closed current of bidegree 
$(1,1)$ on $X$ belong to $\cT(X)$? Is $\cT(X)$ closed in 
the weak$^*$ topology on currents? If $T_k,\,T\in\cT(X)$ 
and $T_k\to T$ weakly on $X$, does $\{f^* T_k\}$ converge to 
$f^* T$ weakly on $Y$?

\par A K\"ahler form on $X$ is a current $\omega\in\cT(X)$ 
whose local potentials 
extend to smooth strictly psh functions in local embeddings of $X$ 
to Euclidean spaces. We call $X$ a K\"ahler space if $X$ admits
 a K\"ahler form (see also \cite[p.\,346]{Gra:62}, 
 \cite{O87}, \cite[Sec. 5]{EGZ09}). 
 A K\"ahler form is a particular case of a Hermitian form on 
 a complex space. 
 Recall that a Hermitian form on a complex manifold is a smooth 
 positive $(1,1)$-form and can be identified to a Hermitian metric. 
Now, a Hermitian form on a complex space $X$ is defined as 
a smooth $(1,1)$-form $\omega$ on $X$ such that for every point 
$x\in X$ there exists a local embedding 
$\tau:U\to G\subset\C^N$ 
with $x\in U$ and a Hermitian form $\widetilde\omega$ on $G$ with 
  $\omega=\tau^*\widetilde\omega$ on $U\cap X_{\rm reg}$\,.
A Hermitian form on a paracompact complex space $X$ is 
constructed as usual by a partition of unity argument. 
A Hermitian form $\omega$ on $X$ clearly induces a Hermitian form 
in the usual sense (and thus a Hermitian metric) on $X_{\rm reg}$\,. 
Note that $\omega^n/n!$ gives locally an area measure on $X$.

\subsection{Singular Hermitian holomorphic line bundles 
on analytic spaces}\label{SS:lb} 
Let $L$ be a holomorphic line bundle on a normal K\"ahler space
$(X,\omega)$. 
The notion of singular Hermitian metric $h$ on $L$ is defined 
exactly as in the smooth case 
(see \cite{D90}, \cite[p.\,97]{MM07})): if $e_\alpha$
  is a holomorphic frame of $L$ over an open set 
  $U_\alpha\subset X$
   then $|e_\alpha|^2_h=e^{-2\varphi_\alpha}$ where 
   $\varphi_\alpha\in L^1_{loc}(U_\alpha,\omega^n)$.
 If $g_{\alpha\beta}=e_\beta/e_\alpha\in\mathcal O^*_X
 (U_\alpha\cap U_\beta)$ 
    are the transition functions of $L$ then 
    $\varphi_\alpha=\varphi_\beta+\log|g_{\alpha\beta}|$. 
The curvature current $c_1(L,h)\in\mathcal D'_{n-1,n-1}(X)$ of $h$
 is defined by $c_1(L,h)=dd^c\varphi_\alpha$ on 
 $U_\alpha\cap X_{\rm reg}$. We will denote by $h^p$ 
 the singular Hermitian metric induced by $h$ on 
$L^p:=L^{\otimes p}$. If $c_1(L,h)\geq0$ then the weight 
$\varphi_\alpha$
is psh on $U_\alpha\cap X_{\rm reg}$ and since $X$ is normal
it extends to a psh function on $U_\alpha$ \cite[Satz\,4]{GR56}, 
        hence $c_1(L,h)\in\cT(X)$. 
\par Let $L$ be a holomorphic line bundle on a compact normal 
K\"ahler space $(X,\omega)$. Then the space $H^0(X,L)$ of 
holomorphic sections of $L$ is finite dimensional. 
This is a special case of the Cartan-Serre
 finiteness theorem; an elementary proof using the 
 Schwarz lemma can be found in 
 \cite[Th\'eor\`eme 1,\,p.27]{An:63}. The space $H^0_{(2)}(X,L)$
  defined as in \eqref{e:bs} is therefore also finite dimensional.

\par If $P_p$ and $\gamma_p$ are the Bergman kernel functions, 
resp. the Fubini-Study currents, of the spaces $H^0_{(2)}(X,L_p)$ 
from Theorem \ref{T:mt1}, it follows from \eqref{e:BFS1} that 
$\log P_p\in L^1(X,\omega^n)$ and 
\begin{equation}\label{e:BFS2}
\gamma_p-c_1(L_p,h_p)=\frac{1}{2}\,dd^c\log P_p\,.
\end{equation}
Moreover, as in \cite{CM11,CM13}, one has the following variational formula,  
$$P_p(x)=\max\big\{|S(x)|^2_{h_p}:\,S\in H^0_{(2)}(X,L_p),\;
\|S\|_p=1\big\}.$$
This is valid for all $x\in X$ such that $\varphi_p(x)>-\infty$, where $\varphi_p$ is a local weight of the metric $h_p$ near $x$.

\subsection{Resolution of singularities}\label{SS:desing} 
Bierstone and Milman constructed a resolution of singularities of a 
compact analytic space $X$, $\pi:\widetilde X\longrightarrow X$, 
by a finite sequence of blow-ups with smooth center 
$\sigma_j:X_{j+1}\longrightarrow X_j$, $X_0=X$, with 
the property that for any local embedding 
$X\vert_U\hookrightarrow{\mathbb C}^N$ this sequence of 
blow-ups is induced by the embedded desingularization of 
$X\vert_U$ \cite[Theorem 13.2]{BM97}. In \cite[sec. 6]{GM06}
it is shown that the embedded desingularization of 
$X\vert_U\hookrightarrow{\mathbb C}^N$ by a finite sequence 
of blow-ups with smooth center is equivalent to a single blow-up 
along a coherent sheaf of ideals $\mathcal I$ whose support is 
$X_{\rm sing}\vert_U$. It follows that every point $x\in X$ has 
a neighborhood $U\subset X$ for which there exists an ideal 
$\mathcal I_U$ generated by finitely many holomorphic functions 
on $U$ such that $\pi:\pi^{-1}(U)\longrightarrow U$ is equivalent 
to the blow-up of $X\vert_U$ along $\mathcal I_U$. 

\par We fix throughout the paper a resolution of singularities 
$\pi:\widetilde X\longrightarrow X$ of our compact normal space 
$X$ as described above, and we denote by 
$\Sigma=\pi^{-1}(X_{\rm sing})$ the exceptional divisor. 
Note that $\pi:\widetilde X\setminus\Sigma\longrightarrow 
X_{\rm reg}$ is a biholomorphism. We will need to consider the 
singular Hermitian holomorphic line bundles $(\pi^* L_p,\pi^* h_p)$ 
obtained by pulling back $(L_p,h_p)$ to $\widetilde X$ by the map 
$\pi$, and their spaces of $L^2$-holomorphic sections 
$$H^0_{(2)}(\widetilde X,\pi^* L_p)=
\left\{\widetilde S\in H^0(\widetilde X,\pi^* L_p):\,
\int_{\widetilde X}|\widetilde S|^2_{\pi^* h_p}\,\frac{\pi^*\omega^n}{n!}
<\infty\right\}.$$

\begin{Lemma}\label{L:bPp} The map 
    $\pi^*:H^0_{(2)}(X,L_p)\longrightarrow 
    H^0_{(2)}(\widetilde X,\pi^* L_p)$ is an isometry and 
    the Bergman kernel function of 
    $H^0_{(2)}(\widetilde X,\pi^* L_p)$ is 
    $\widetilde P_p=P_p\circ\pi$. 
\end{Lemma}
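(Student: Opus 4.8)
The plan is to exploit the fact that $\pi\colon\wi X\setminus\Sigma\to X_{\rm reg}$ is a biholomorphism together with the normality of $X$, so that pulling back sections loses nothing. First I would verify that $\pi^*$ lands in the correct space and is norm-preserving. Given $S\in H^0_{(2)}(X,L_p)$, the pullback $\pi^*S$ is a holomorphic section of $\pi^*L_p$ on $\wi X$, and since the pointwise norm is a local metric computation that transports under $\pi^*h_p$, we have $|\pi^*S|^2_{\pi^*h_p}=|S|^2_{h_p}\circ\pi$ on $\wi X\setminus\Sigma$. Because $\Sigma=\pi^{-1}(X_{\rm sing})$ has measure zero for $\pi^*\omega^n$ and $X_{\rm sing}$ has measure zero for $\omega^n$, the change-of-variables formula for the biholomorphism $\pi\colon\wi X\setminus\Sigma\to X_{\rm reg}$ gives
\[
\int_{\wi X}|\pi^*S|^2_{\pi^*h_p}\,\frac{\pi^*\omega^n}{n!}
=\int_{\wi X\setminus\Sigma}|S|^2_{h_p}\circ\pi\,\frac{\pi^*\omega^n}{n!}
=\int_{X_{\rm reg}}|S|^2_{h_p}\,\frac{\omega^n}{n!}=\|S\|_p^2\,,
\]
so $\pi^*$ is an isometry onto its image.

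The key step, and where I expect the main obstacle to lie, is surjectivity: I must show every $\wi S\in H^0_{(2)}(\wi X,\pi^*L_p)$ is of the form $\pi^*S$ for some $S\in H^0_{(2)}(X,L_p)$. Away from the exceptional set, $\pi$ is biholomorphic, so $\wi S$ transports to a holomorphic section $S_0$ of $L_p$ on $X_{\rm reg}$. The heart of the argument is to extend $S_0$ holomorphically across $X_{\rm sing}$. Here I would invoke the Riemann second extension theorem on normal complex spaces (quoted in the excerpt, \cite[p.\,143]{GR84}), which guarantees that a holomorphic function on $X_{\rm reg}$ extends uniquely across $X_{\rm sing}$ precisely because $\codim X_{\rm sing}\geq 2$. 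Working in a local frame $e_p$ of $L_p$ where $S_0=s_0\,e_p$ with $s_0\in\cO_X(U\cap X_{\rm reg})$, the coefficient $s_0$ extends to a holomorphic function on $U$, and these local extensions are compatible (they must agree on $X_{\rm reg}$, which is dense and over which $L_p$ is already trivialized consistently), yielding a global $S\in H^0(X,L_p)$ with $S|_{X_{\rm reg}}=S_0$ and hence $\pi^*S=\wi S$ on $\wi X\setminus\Sigma$, and by continuity everywhere. The delicate point is ensuring the extension argument is legitimate: one must check that the $L^2$ bound on $\wi S$ is what lets the transported section $S_0$ be the restriction of an honest element of the Bergman space, so that the isometry already established certifies $S\in H^0_{(2)}(X,L_p)$.

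Once the isometry $\pi^*\colon H^0_{(2)}(X,L_p)\to H^0_{(2)}(\wi X,\pi^*L_p)$ is shown to be bijective, the statement about Bergman kernels follows formally. The plan is to take an orthonormal basis $S_1^p,\dots,S_{d_p}^p$ of $H^0_{(2)}(X,L_p)$; by the isometry property, $\pi^*S_1^p,\dots,\pi^*S_{d_p}^p$ is an orthonormal basis of $H^0_{(2)}(\wi X,\pi^*L_p)$. Then, using the defining formula \eqref{e:BFS1} together with the pointwise identity $|\pi^*S_j^p|^2_{\pi^*h_p}=|S_j^p|^2_{h_p}\circ\pi$, I would compute
\[
\wi P_p=\sum_{j=1}^{d_p}|\pi^*S_j^p|^2_{\pi^*h_p}
=\sum_{j=1}^{d_p}|S_j^p|^2_{h_p}\circ\pi=P_p\circ\pi
\]
on $\wi X\setminus\Sigma$, and the equality persists on all of $\wi X$ since the Bergman kernel function is independent of the choice of orthonormal basis and both sides are determined by the respective $L^2$ structures. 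This completes the proof; the only genuine content is the normality-driven extension in the surjectivity step, with everything else being bookkeeping about change of variables and measure-zero exceptional sets.
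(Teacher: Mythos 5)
Your proposal is correct and follows essentially the same route as the paper: the change-of-variables isometry on $\wi X\setminus\Sigma$ combined with Riemann's second extension theorem on the normal space $X$ to pass sections back from $\wi X$ to $X$, and then the orthonormal-basis computation for the Bergman kernels. The only cosmetic difference is that you establish surjectivity of $\pi^*$ directly, whereas the paper phrases the same extension argument as showing that any $\wi S$ orthogonal to the pulled-back orthonormal basis must vanish; the two formulations are equivalent.
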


\begin{proof} Let $S^p_1,\dots,S^p_{d_p}$ be an orthonormal 
    basis of $H^0_{(2)}(X,L_p)$ and $\widetilde S^p_j=\pi^* S^p_j$ 
    be the induced sections of $\pi^* L^p$. Then 
    $|\widetilde S^p_j|_{\pi^* h_p}=|S^p_j|_{h_p}\circ\pi$ and 
$$\int_{\widetilde X}|\widetilde S^p_j|^2_{\pi^* h_p}\,
\frac{\pi^*\omega^n}{n!}=\int_{\widetilde X\setminus\Sigma}
|\widetilde S^p_j|^2_{\pi^* h_p}\,\frac{\pi^*\omega^n}{n!}
=\int_{X_{\rm reg}}|S^p_j|^2_{h_p}\,\frac{\omega^n}{n!}=1.$$
Suppose now that 
$\widetilde S\in H^0_{(2)}(\widetilde X,\pi^* L_p)$ is orthogonal 
to all $\widetilde S^p_j$ and let $S$ be the induced section on 
$X_{\rm reg}$. Then, since $X$ is normal, Riemann's second 
extension theorem \cite[p.\,143]{GR84} shows that $S$ extends 
to a section $S\in H^0(X,L_p)$. By the preceding argument, 
$S\in H^0_{(2)}(X,L_p)$ is orthogonal to all $S^p_j$, hence $S=0$
and $\widetilde S=0$. It follows that $\{\widetilde S^p_j\}$ is an 
orthonormal basis of $H^0_{(2)}(\widetilde X,\pi^* L_p)$ and 
$$\widetilde P_p=\sum_{j=1}^{d_p}|\widetilde S^p_j|^2_{\pi^* h_p}
=\sum_{j=1}^{d_p}|S^p_j|^2_{h_p}\circ\pi=P_p\circ\pi.$$
\end{proof}

\par Adapting the proof of \cite[Lemma 1]{M67} to our situation 
we obtain the following lemma, whose proof is included for
the convenience of the reader.

\begin{Lemma}\label{L:Moishezon} 
Let $(X,\omega)$ be a compact (reduced) Hermitian space and 
$\pi:\widetilde X\longrightarrow X$ be a resolution of singularities 
as described above, with exceptional divisor $\Sigma$\,. 
Then there exists a smooth Hermitian metric $\theta$ on 
$F=\mathcal O_{\widetilde X}(-\Sigma)$ and $C>0$ such that 
$\Omega=C\pi^*\omega+c_1(F,\theta)$ is a Hermitian form on 
$\widetilde X$ and $\Omega\geq\pi^*\omega$. If $\omega$ 
is K\"ahler, then $\Omega$ is K\"ahler, too.
\end{Lemma}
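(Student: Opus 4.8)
The plan is to exploit the relative ampleness of $F=\mathcal O_{\widetilde X}(-\Sigma)$ for the map $\pi$, which is built into the description of the resolution in Section~\ref{SS:desing}. There, every point of $X$ has a neighborhood $U$ such that $\pi:\pi^{-1}(U)\to U$ is the blow-up of $X\vert_U$ along an ideal $\mathcal I_U=(f_0,\dots,f_m)$ generated by finitely many holomorphic functions. Writing $g_j=\pi^* f_j$, these are holomorphic sections of $F=\pi^*\mathcal I\cdot\mathcal O_{\widetilde X}=\mathcal O_{\widetilde X}(-\Sigma)$ over $\pi^{-1}(U)$ with no common zeros, and the induced map $(\pi,[g_0:\dots:g_m]):\pi^{-1}(U)\hookrightarrow U\times\mathbb P^m$ is a closed embedding, under which $F$ corresponds to the pull-back of $\mathcal O_{\mathbb P^m}(1)$. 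First I would define a local Hermitian metric $\theta_U$ on $F$ by declaring, in a local frame $e_F$ with $g_j=\tilde g_j\,e_F$, the weight $\varphi_U=\tfrac12\log\sum_j|\tilde g_j|^2$, so that $c_1(F,\theta_U)=dd^c\varphi_U$ is the pull-back of the Fubini--Study form $\omega_{\mathrm{FS}}$ under $[\tilde g_0:\dots:\tilde g_m]$. Since this map is a closed embedding over $U$, its restriction to each fibre of $\pi$ is again a closed embedding, and therefore $c_1(F,\theta_U)$ is semipositive on $\pi^{-1}(U)$ and strictly positive on the vertical (fibre) tangent directions of $\pi$.

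Next I would patch the $\theta_U$ into a global smooth Hermitian metric $\theta$ on $F$ by a partition of unity $\{\rho_\alpha\}$ subordinate to a finite cover of the compact space $X$; the additive weights $\psi=\sum_\alpha\rho_\alpha\varphi_{U_\alpha}$ define a genuine smooth metric because the $\varphi_{U_\alpha}$ are smooth weights of the same line bundle $F$ differing on overlaps by smooth bounded functions. The resulting curvature $c_1(F,\theta)=dd^c\psi$ is a smooth $(1,1)$-form on $\widetilde X$. Simultaneously, $\pi^*\omega$ is a smooth semipositive $(1,1)$-form which is strictly positive on $\widetilde X\setminus\Sigma$ and whose only degeneracy, along $\Sigma$, is exactly in the vertical directions of $\pi$ over $X_{\rm sing}$. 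Thus the positivity of $c_1(F,\theta)$ in the vertical directions and that of $\pi^*\omega$ transverse to the fibres are complementary.

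I would then conclude by a compactness argument on $\widetilde X$: once $c_1(F,\theta)$ is strictly positive on all vertical tangent spaces along the compact set $\Sigma$, the form $\Omega=C\pi^*\omega+c_1(F,\theta)$ is positive definite on a neighborhood of $\Sigma$ for $C$ large, and on the compact remainder $\pi^*\omega$ is already strictly positive, so a single large $C$ makes $\Omega$ a Hermitian form on all of $\widetilde X$. Enlarging $C$ if necessary gives $\Omega-\pi^*\omega=(C-1)\pi^*\omega+c_1(F,\theta)\geq0$, i.e.\ $\Omega\geq\pi^*\omega$. Finally, if $\omega$ is K\"ahler, then $\pi^*\omega$ is closed and $c_1(F,\theta)$ is closed as a first Chern form (locally $dd^c$ of a weight), so $\Omega$ is a closed positive $(1,1)$-form, hence K\"ahler.

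The main obstacle is the verification of the vertical positivity of $c_1(F,\theta)$ after the partition-of-unity gluing. Because $\pi^*\omega$ contributes nothing in the vertical directions, the term $C\pi^*\omega$ cannot rescue positivity there, so $c_1(F,\theta)$ alone must remain strictly positive along every fibre. The gluing produces, beyond the convex combination $\sum_\alpha\rho_\alpha\,dd^c\varphi_{U_\alpha}$ (which is vertically positive), correction terms of the form $(\varphi_{U_\alpha}-\varphi_{U_\beta})\,dd^c\rho_\alpha$ and $d\rho_\alpha\wedge d^c(\varphi_{U_\alpha}-\varphi_{U_\beta})$ that are bounded but possibly negative; controlling these so that strict vertical positivity survives uniformly over $\Sigma$ is precisely the delicate step where I would adapt Moishezon's argument from \cite{M67}.
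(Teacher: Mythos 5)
Your overall strategy is the same as the paper's (local description of $\pi$ as a blow-up, local metrics on $F$ pulled back from $\mathcal O_{\mathbb P^m}(1)$ with Fubini--Study-type weights, gluing by a partition of unity lifted from $X$, then a pointwise-plus-compactness argument to choose $C$), but your write-up is genuinely incomplete at exactly the point you flag in your last paragraph: you never establish that the strict positivity of $c_1(F,\theta)$ in the vertical directions survives the gluing, and you frame this as a problem of \emph{estimating} bounded-but-possibly-negative correction terms ``uniformly over $\Sigma$''. That framing misses the key idea, which is that no estimate is needed: the correction terms vanish \emph{identically} in the relevant directions. Since your cut-off functions are functions on $X$ composed with $\pi$, both $d(\rho_\alpha\circ\pi)$ and $dd^c(\rho_\alpha\circ\pi)$ annihilate every $t\in E_x:=\ker d\pi(x)$; indeed, locally $\rho_\alpha\circ\pi=\widetilde\rho_\alpha\circ(\tau\circ\pi)$ with $\tau$ a local embedding of $X$ and $\widetilde\rho_\alpha$ smooth, and first derivatives and complex Hessians of such pull-backs contract to zero against vectors in the kernel of the holomorphic map $\tau\circ\pi$. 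Hence every term of $dd^c\bigl(\sum_\alpha(\rho_\alpha\circ\pi)\varphi_{U_\alpha}\bigr)$ containing a derivative of some $\rho_\alpha\circ\pi$ dies when contracted with $t\wedge\bar t$, $t\in E_x$, and what survives is exactly the convex combination $\sum_\alpha\rho_\alpha(\pi(x))\,dd^c\varphi_{U_\alpha}(t,\bar t)>0$. This pointwise identity is the content of \eqref{e:Levi3} in the paper (there the glued weights are the $s_{ik}$, and the cross-terms are absent for precisely this reason); without it, your final paragraph describes the difficulty rather than resolving it, so the proof as written has a gap at its central step.

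A secondary imprecision, easily repaired from your own setup: the complementarity you need is between $c_1(F,\theta)$ and $\pi^*\omega$ on all of $E_x=\ker d\pi(x)$ (equivalently $\ker d(\tau\circ\pi)(x)$), since that is exactly the degeneracy space of $\pi^*\omega$ at $x\in\Sigma$; positivity of $c_1(F,\theta_U)$ merely on tangent spaces to the set-theoretic fibres of $\pi$, which is what ``the restriction to each fibre is a closed embedding'' gives, is a priori weaker. The correct statement follows from the fact you quote first: since $(\pi,[g_0:\cdots:g_m])$ embeds $\pi^{-1}(U)$ into $U\times\mathbb P^m$, any $t\in E_x\setminus\{0\}$ has nonzero image under the differential of $[g_0:\cdots:g_m]$, so the pulled-back Fubini--Study form is strictly positive on it. With these two repairs (exact vanishing of the gluing errors on $E_x$, and vertical positivity stated on $E_x$ via the embedding), your argument coincides with the paper's proof.
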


\begin{proof} As in the proof of \cite[Lemma 1]{M67}, we can 
    find an open cover $X=\bigcup_{k=1}^NU_k$ with the following 
    properties:

\medskip

\par (i) There exist a local embedding $\tau_k:U_k\to B_k$ into 
a ball $B_k\subset{\mathbb C}^{l_k}$ and a smooth Hermitian form 
$\omega_k$ on $B_k$ such that $\omega=\tau_k^*\omega_k$
on $U_k\cap X_{\rm reg}$. Hence we may assume that 
$U_k\subset B_k$ and $\omega$ is the restriction of $\omega_k$
to $U_k\cap X_{\rm reg}$. Choose a strictly psh function $\eta_k$
on $B_k$ such that $\omega_k\geq dd^c\eta_k$ on $B_k$.
If $\omega$ is K\"ahler, we choose $\eta_k$ such that 
$\omega_k=dd^c\eta_k$ on $B_k$, hence $\omega=dd^c\eta_k$ 
on $U_k\cap X_{\rm reg}$\,. 

\par (ii) There exist finitely many holomorphic functions 
$f_{0k},\dots,f_{N_kk}\in\mathcal O_X(U_k)$ such that over 
$U_k$ the map $\pi:\pi^{-1}(U_k)\longrightarrow U_k$ and the line
bundle $\mathcal O_{\widetilde X}(\Sigma)\vert_{\pi^{-1}(U_k)}$
are described as follows.
If $\Gamma_k\subset U_k\times{\mathbb P}^{N_k}$ is the graph 
of the meromorphic map $x\mapsto[f_{0k}(x):\ldots:f_{N_kk}(x)]$ 
with canonical projections $\pi_{1k}:\Gamma_k\longrightarrow U_k$ 
and $\pi_{2k}:\Gamma_k\longrightarrow{\mathbb P}^{N_k}$ then 
$\pi:\pi^{-1}(U_k)\longrightarrow U_k$ coincides with 
$\pi_{1k}:\Gamma_k\longrightarrow U_k$ and 
$\mathcal O_{\widetilde X}(\Sigma)\vert_{\pi^{-1}(U_k)}$ 
can be identified with the line bundle 
$\pi^*_{2k}\mathcal O_{{\mathbb P}^{N_k}}(-1)$ on $\Gamma_k$.

\medskip

Let $[y_{0k}:\ldots:y_{N_kk}]$ denote the homogeneous 
coordinates on ${\mathbb P}^{N_k}$, 
$G_{ik}=\{y_{ik}\neq0\}\subset{\mathbb P}^{N_k}$, 
$\widetilde G_{ik}=\pi^{-1}_{2k}(G_{ik})\subset\Gamma_k
=\pi^{-1}(U_k)$. The transition functions of
$\mathcal O_{{\mathbb P}^{N_k}}(-1)$ are $y_{ik}/y_{jk}$
and we endow $\mathcal O_{{\mathbb P}^{N_k}}(-1)$ with a 
smooth Hermitian metric with weights $\psi'_{ik}$ on $G_{ik}$ such
that $-\psi'_{ik}$ is strictly psh. The corresponding transition 
functions of $\mathcal O_{\widetilde X}(\Sigma)
\vert_{\pi^{-1}(U_k)}$ are
$\widetilde g_{ij,k}=(f_{ik}\circ\pi_{1k})/(f_{jk}\circ\pi_{1k})$ 
on $\widetilde G_{ik}\cap\widetilde G_{jk}$, and we denote by 
$\psi_{ik}$ the weights of the induced Hermitian metric. 
Then $-\psi_{ik}$ is a smooth psh function on $\widetilde G_{ik}$ 
and $\psi_{ik}=\psi_{jk}+\log|\widetilde g_{ij,k}|$ on 
$\widetilde G_{ik}\cap\widetilde G_{jk}$.

\par Consider the open cover 
$\{\widetilde G_{ik}:\,1\leq k\leq N,\,0\leq i\leq N_k\}$ of 
$\widetilde X$. Note that 
$\pi^{-1}(U_k)=\bigcup_{i=0}^{N_k}\widetilde G_{ik}$ and that 
$\mathcal O_{\widetilde X}(\Sigma)$ is trivial on $\widetilde G_{ik}$. 
We denote the transition functions of 
$\mathcal O_{\widetilde X}(\Sigma)$ by $g_{i_1k_1,i_2k_2}\in
\mathcal O(\widetilde G_{i_1k_1}\cap\widetilde G_{i_2k_2})$. 
Hence 
$$\psi_{i_1k}=\psi_{i_2k}+\log|g_{i_1k,i_2k}|\;,\;\;g_{i_1k,i_2k}
=\widetilde g_{i_1i_2,k}\,.$$

\par We now construct a smooth Hermitian metric on 
$\mathcal O_{\widetilde X}(\Sigma)$ with weights $s_{ik}$ on 
$\widetilde G_{ik}$. Let $\{\rho_k\}_{1\leq k\leq N}$ be 
a smooth partition of unity on $X$ so that 
$\supp\rho_k\subset U_k$. For a fixed $k_1\in\{1,\dots,N\}$
let $\varphi^{ik}_{k_1}$ be a function defined on 
$\pi^{-1}(U_{k_1})\cap\wi G_{ik}$ as follows: if
$x\in\wi G_{i_1k_1}$ then 
$$\varphi^{ik}_{k_1}(x)=\psi_{i_1k_1}(x)+\log|g_{ik,i_1k_1}(x)|\,.$$
Note that $\varphi^{ik}_{k_1}$ is well defined and smooth on 
$\pi^{-1}(U_{k_1})\cap\wi G_{ik}$. Indeed, if 
$x\in\wi G_{i_1k_1}\cap\wi G_{i'_1k_1}\cap\wi G_{ik}$ then 
$g_{i'_1k_1,i_1k_1}\cdot g_{i_1k_1,ik}\cdot g_{ik,i'_1k_1}=1$ so 
\begin{eqnarray*}
\psi_{i'_1k_1}(x)+\log|g_{ik,i'_1k_1}(x)|&=&\psi_{i_1k_1}(x)
+\log|g_{i'_1k_1,i_1k_1}(x)|+\log|g_{ik,i'_1k_1}(x)|\\
&=&\psi_{i_1k_1}(x)+\log|g_{ik,i_1k_1}(x)|\,.
\end{eqnarray*}
Next we define 
$$s_{ik}=\sum_{k_1=1}^N(\rho_{k_1}\circ\pi)
\varphi^{ik}_{k_1}\,\text{ on }\,\widetilde G_{ik}\,.$$
We claim that $s_{ik}=s_{i'k'}+\log|g_{ik,i'k'}|$ on 
$\wi G_{ik}\cap\wi G_{i'k'}$, so $\{s_{ik}\}$  defines  
a smooth Hermitian metric on $\mathcal O_{\widetilde X}(\Sigma)$. 
For this we show that if $k_1\in\{1,\dots,N\}$ then 
$$(\rho_{k_1}\circ\pi)\varphi^{ik}_{k_1}
=(\rho_{k_1}\circ\pi)(\varphi^{i'k'}_{k_1}+\log|g_{ik,i'k'}|).$$ 
Let $x\in\pi^{-1}(U_{k_1})$ and assume 
$x\in\wi G_{i_1k_1}\cap\wi G_{ik}\cap\wi G_{i'k'}$. Since 
$g_{i'k',i_1k_1}\cdot g_{i_1k_1,ik}\cdot g_{ik,i'k'}=1$ we obtain
\begin{eqnarray*}
\varphi^{ik}_{k_1}(x)&=&\psi_{i_1k_1}(x)+\log|g_{ik,i_1k_1}(x)|
=\psi_{i_1k_1}(x)+\log|g_{i'k',i_1k_1}(x)|+\log|g_{ik,i'k'}(x)|\\
&=&\varphi^{i'k'}_{k_1}(x)+\log|g_{ik,i'k'}(x)|\,,
\end{eqnarray*}
which proves our claim. 

\par We finally show that the desired metric $\theta$ on $F$ 
is the metric defined by 
the weights $\{-s_{ik}\}$. By a standard compactness argument 
it suffices to prove that for every $x\in\wi G_{ik}$ there exists 
$C_x>0$ such that for $C>C_x$ the function 
$C\eta_k\circ\pi-s_{ik}$ is strictly psh at $x$. We write $T_x\wi X=E_x\oplus F_x$, 
where $E_x=\ker d\pi(x)$. Note that the Levi form (see e.g. 
\cite[p. \ 228]{Ho2} for the definition) of the function $\eta_k\circ\pi$ 
is given by 
$$\mathcal L(\eta_k\circ\pi)(x)(t,t')
=\mathcal L\eta_k(\pi(x))(d\pi(x)(t),d\pi(x)(t'))\,,$$
so 
\begin{equation}\label{e:Levi1}
\mathcal L(\eta_k\circ\pi)(x)(t,t')=0\;,\;\;
\forall\,t\in T_x\wi X,\,t'\in E_x\,.
\end{equation}
Moreover, since $\eta_k$ is strictly psh at $\pi(x)\in B_k$ and 
$d\pi(x)$ is injective on $F_x$ we deduce that 
\begin{equation}\label{e:Levi2}
\mathcal L(\eta_k\circ\pi)(x)(t,t)>0\;,\;\;\forall\,t\in 
F_x\setminus\{0\}\,.
\end{equation}
The formula of $s_{ik}$ implies that for each $t\in E_x$ 
$$-\mathcal L s_{ik}(x)(t,t)=-\sum_{k_1=1}^N\rho_{k_1}(\pi(x))
\mathcal  L\psi_{i_1k_1}(x)(t,t)\geq0\,,$$
since each function $-\psi_{i_1k_1}$ is psh on $\wi G_{i_1k_1}$.
If $\rho_{k_1}(\pi(x))>0$ we may assume that 
$x\in\wi G_{i_1k_1}\cap\wi G_{ik}$ and we will show that 
$-\mathcal L\psi_{i_1k_1}(x)(t,t)>0$ for all 
$t\in E_x\setminus\{0\}$. As 
$\pi^{-1}(U_{k_1})=\Gamma_{k_1}\subset U_{k_1}
\times{\mathbb P}^{N_{k_1}}\subset B_{k_1}
\times{\mathbb P}^{N_{k_1}}$, $x$ has a neighborhood
$\wi U\subset\wi X$ such that 
$\wi U\subset B_{k_1}\times G_{i_1k_1}$. Recall that on $\wi U$, 
$\psi_{i_1k_1}=\psi'_{i_1k_1}\circ\pi_{2k_1}$ and 
$\pi=\pi_{1k_1}$. We consider $\pi_{1k_1},\,\pi_{2k_1}$ 
as restrictions of the canonical projections 
$\pi_{1k_1}:B_{k_1}\times G_{i_1k_1}\longrightarrow B_{k_1}$, 
$\pi_{2k_1}:B_{k_1}\times G_{i_1k_1}\longrightarrow G_{i_1k_1}$. 
Since $t\in E_x\subset\ker d\pi_{1k_1}(x)$ and $t\neq0$
it follows that $d\pi_{2k_1}(x)(t)\neq0$. Therefore 
$$-\mathcal L\psi_{i_1k_1}(x)(t,t)
=-\mathcal L\psi'_{i_1k_1}(\pi_{2k_1}(x))(d\pi_{2k_1}(x)(t),
d\pi_{2k_1}(x)(t))>0\,,$$
as $-\psi'_{i_1k_1}$ is strictly psh on $G_{i_1k_1}$. This yields 
\begin{equation}\label{e:Levi3}
-\mathcal L s_{ik}(x)(t,t)>0\;,\;\;\forall\,t\in E_x\setminus\{0\}\,.
\end{equation}
By \eqref{e:Levi1}, \eqref{e:Levi2} and \eqref{e:Levi3} 
we conclude that there exists
$C_x>0$ such that if $C>C_x$ then 
$\mathcal L(C\eta_k\circ\pi-s_{ik})(x)(t,t)>0$ for all 
$t\in T_x\wi X\setminus\{0\}$. This finishes the proof.
\end{proof}

\par We look now at the nature of the base space $X$ as
implied by the hypotheses made on the curvature of the bundles
involved in our results. Recall that a compact irreducible complex 
space $X$ of dimension $n$ is called Moishezon if $X$ possesses 
$n$ algebraically independent meromorphic functions,
i.e.,\ if the transcendence degree of the field of meromorphic
functions on $X$ equals the complex dimension of $X$. 
Let $X'$ and $X$ be compact irreducible spaces and $h:X'\to X$ 
be a proper modification. Then $h$ induces an isomorphism of the 
fields of meromorphic functions on $X'$ and $X$, respectively, 
\cite[Theorem\,2.1.18]{MM07}, hence $X'$ is Moishezon if and only
if $X$ is Moishezon.
Moishezon \cite{M67} showed that if $X$ is a Moishezon space,
then there exists a
proper modification $h:X'\to X$, obtained by a finite number of 
blow-ups with
smooth centers, such that $X'$ is a projective algebraic manifold 
(for a proof see also \cite[Theorem\,2.2.16]{MM07}).

Lemma \ref{L:Moishezon} yields in particular the following:

\begin{Proposition}\label{P:Moishezon} 
If $(X,\omega)$ is a compact (reduced) Hermitian space endowed 
with a singular Hermitian holomorphic 
line bundle $(L,h)$ such that $c_1(L,h)\geq\varepsilon\omega$ 
for some constant $\varepsilon>0$ then 
$X$ is Moishezon.
\end{Proposition}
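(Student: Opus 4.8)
The plan is to transfer the problem to the resolution $\pi:\widetilde X\to X$, where the tools for smooth compact complex manifolds are available, and to produce there a line bundle carrying a singular Hermitian metric whose curvature current is a K\"ahler current. Since a proper modification induces an isomorphism of the fields of meromorphic functions (as recalled just above the statement), and $\pi$ is such a modification, it suffices to prove that $\widetilde X$ is Moishezon. Working one irreducible component at a time we may assume $X$, and hence $\widetilde X$, irreducible: restricting $\omega$ and $(L,h)$ to a component $X_i$ preserves the hypothesis $c_1(L|_{X_i},h|_{X_i})\geq\varepsilon\,\omega|_{X_i}$ with $\omega|_{X_i}$ Hermitian.

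First I would pull back the data. The pull-back $(\pi^*L,\pi^*h)$ is a singular Hermitian holomorphic line bundle on $\widetilde X$, and since $c_1(L,h)\in\cT(X)$ its pull-back is well defined with $c_1(\pi^*L,\pi^*h)=\pi^*c_1(L,h)\geq\varepsilon\,\pi^*\omega$. The obstacle is that $\pi^*\omega$ is merely semipositive, degenerating along $\Sigma$, so this inequality alone does not exhibit a K\"ahler current on $\widetilde X$. This is precisely the gap filled by Lemma \ref{L:Moishezon}, which supplies a smooth Hermitian metric $\theta$ on $F=\mathcal O_{\widetilde X}(-\Sigma)$ and $C>0$ such that $\Omega=C\pi^*\omega+c_1(F,\theta)$ is a Hermitian form on $\widetilde X$ with $\Omega\geq\pi^*\omega$; in particular $c_1(F,\theta)=\Omega-C\pi^*\omega$ is a smooth $(1,1)$-form.

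Next I would combine these bundles. For a large integer $m$, consider $\widetilde L_m=(\pi^*L)^{\otimes m}\otimes F$ endowed with the singular metric $(\pi^*h)^{\otimes m}\otimes\theta$. Its curvature current satisfies
\[
c_1\big((\pi^*L)^{\otimes m}\otimes F,\,(\pi^*h)^{\otimes m}\otimes\theta\big)=m\,\pi^*c_1(L,h)+c_1(F,\theta)\geq (m\varepsilon-C)\,\pi^*\omega+\Omega.
\]
Choosing $m$ with $m\varepsilon\geq C$ and using $\pi^*\omega\geq0$ gives that this curvature current is $\geq\Omega$. Since $\Omega$ is a Hermitian form, the curvature of $\widetilde L_m$ is a K\"ahler current.

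Finally I would invoke the characterization of big line bundles by strictly positive curvature currents (see \cite{MM07}): a holomorphic line bundle on a compact complex manifold carrying a singular Hermitian metric whose curvature is a K\"ahler current is big, and a compact complex manifold admitting a big line bundle is Moishezon. Hence $\widetilde L_m$ is big, so $\widetilde X$ is Moishezon, and by the birational invariance recalled above $X$ is Moishezon as well. The main obstacle throughout is the degeneration of $\pi^*\omega$ along the exceptional divisor $\Sigma$; this is exactly what the preliminary construction of $\theta$ in Lemma \ref{L:Moishezon} repairs, and once it is granted the remainder is a one-line curvature estimate together with the standard bigness–Moishezon dictionary.
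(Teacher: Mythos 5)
Your proof is correct and follows essentially the same route as the paper: both use Lemma \ref{L:Moishezon} to produce the Hermitian form $\Omega=C\pi^*\omega+c_1(F,\theta)$ on $\widetilde X$, twist a high power of $\pi^*L$ by $F=\mathcal O_{\widetilde X}(-\Sigma)$ to get curvature $\geq\Omega$ (your $\widetilde L_m$ is exactly the paper's $E_p$), and then invoke the Ji--Shiffman/\cite[Theorem 2.3.8]{MM07} criterion together with birational invariance of the Moishezon property. Your explicit reduction to irreducible components and your flagging of the degeneration of $\pi^*\omega$ along $\Sigma$ are just slightly more detailed renderings of steps the paper leaves implicit.
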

\begin{proof} Let $\pi:\widetilde X\longrightarrow X$ be a 
    resolution of singularities as in Lemma \ref{L:Moishezon}, 
    with exceptional divisor $\Sigma$ and with Hermitian form 
    $\Omega=C\pi^*\omega+c_1(F,\theta)$, where 
    $F=\mathcal O_{\widetilde X}(-\Sigma)$. Consider the line 
    bundles $E_p=\pi^* L^p\otimes F$ with singular Hermitian 
    metrics $\theta_p=\pi^* h^p\otimes\theta$. Then 
$$c_1(E_p,\theta_p)=p\pi^* c_1(L,h)+c_1(F,\theta)\geq 
p\varepsilon\pi^*\omega+c_1(F,\theta)\geq\Omega\,,$$
provided that $p\varepsilon\geq C$. 
Hence $\wi X$ carries a singular Hermitian holomorphic line bundle
with strictly positive curvature in the sense of currents.
By \cite{JS93} (see also \cite[Theorem\,2.3.8]{MM07}) it follows 
that $\wi X$ is Moishezon and hence $X$, too.
If $(X,\omega)$ is K\"ahler, then $\wi X$ is already projective. 
Indeed, $(\wi X,\Omega)$ is K\"ahler and Moishezon, so by 
a theorem of Moishezon is projective 
(see e.\,g.\ \cite[Theorem\,2.2.26]{MM07}). 
\end{proof}
The paper \cite{M05} (see also 
\cite[Theorems 3.4.10 and 3.4.14 ]{MM07}) 
gives an integral criterion for a  complex space with isolated 
singularities
 to be Moishezon, generalizing the criterion of Siu and Demailly 
 from the smooth case. 

We recall next the following projectivity criterion.
\begin{Proposition}[Grauert]\label{P:Grauert} 
If $(X,\omega)$ is a compact (reduced) Hermitian space endowed 
with a $\cC^2$ Hermitian holomorphic 
line bundle $(L,h)$ such that $c_1(L,h)\geq\varepsilon\omega$
 for some constant $\varepsilon>0$, then 
$L$ is ample and $X$ is projective.
\end{Proposition}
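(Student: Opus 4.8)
The plan is to recognize the hypothesis as exactly the positivity condition in Grauert's embedding theorem for complex spaces, to verify it carefully in local embeddings, and then to read off ampleness and projectivity; the substantive content is entirely in Grauert's theorem \cite{Gra:62}.

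First I would unwind the curvature hypothesis in charts. Fix $x\in X$, a local embedding $\tau:U\to G\subset\C^N$ with $\tau(x)=q$, and a holomorphic frame $e_\alpha$ of $L$ over $U$, so that $|e_\alpha|^2_h=e^{-2\varphi_\alpha}$. Since $h$ is $\cC^2$, the weight $\varphi_\alpha$ is the restriction to $U\cap X_{\rm reg}$ of a $\cC^2$ function $\widetilde\varphi_\alpha$ on $G$, and $c_1(L,h)=dd^c\varphi_\alpha$ there. Writing $\omega=\tau^*\widetilde\omega$ for a Hermitian form $\widetilde\omega$ on $G$, the hypothesis $c_1(L,h)\geq\varepsilon\omega$ pulls back to $dd^c\widetilde\varphi_\alpha\geq\varepsilon\widetilde\omega$ on the dense subset $\tau(U\cap X_{\rm reg})$ of $V=\tau(U)$. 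As $\widetilde\varphi_\alpha$ is $\cC^2$, the continuous $(1,1)$-form $dd^c\widetilde\varphi_\alpha$ then satisfies $dd^c\widetilde\varphi_\alpha(q)\geq\varepsilon\widetilde\omega(q)>0$, so by continuity $dd^c\widetilde\varphi_\alpha>0$ on a neighborhood of $q$ in $G$. Shrinking $G$, the extension $\widetilde\varphi_\alpha$ is strictly psh, whence $L$ carries a $\cC^2$ metric whose local weights are strictly psh in every local embedding --- which is precisely the statement that $L$ is a \emph{positive} line bundle on the compact complex space $X$.

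Second, I would invoke Grauert's generalization of the Kodaira embedding theorem: a positive line bundle on a compact complex space is ample \cite{Gra:62} (see also \cite{GR84}). Concretely, this produces $p_0\in\N$ such that for $p\geq p_0$ the sections $H^0(X,L^p)$ are base-point free and separate points and tangent directions, so that a basis defines a holomorphic embedding $\Phi_p:X\hookrightarrow\mathbb P^{d_p-1}$ with $\Phi_p^*\mathcal O(1)=L^p$. Hence $L$ is ample and $X$ is a projective variety, as claimed.

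The step I expect to be the main obstacle --- and the reason a direct citation of Grauert is cleanest --- is the production of these global sections on a space that is allowed to be singular. On a manifold this is the Kodaira vanishing / Hörmander $L^2$-estimate argument for $\db$; on a singular $X$ one must either solve $\db$ with $L^2$-estimates on $X_{\rm reg}$ and extend across $X_{\rm sing}$ using normality and $\codim X_{\rm sing}\geq2$, or pass to a resolution $\pi:\widetilde X\to X$. In the latter approach one twists $\pi^*L$ by $F=\mathcal O_{\widetilde X}(-\Sigma)$ exactly as in Lemma \ref{L:Moishezon} and Proposition \ref{P:Moishezon}, obtaining a genuinely positive bundle $E_p=\pi^*L^p\otimes F$ on the compact manifold $\widetilde X$, whose curvature $c_1(E_p,\theta_p)$ is a closed positive $(1,1)$-form and hence a Kähler form; Kodaira's theorem then makes $\widetilde X$ projective. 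The delicate point is the descent, since being Moishezon does not imply being projective in general, so one cannot simply transport projectivity of $\widetilde X$ back to $X$. One must instead use the positivity of $L$ on $X$ itself to guarantee that the pulled-back sections, which by normality satisfy $H^0(X,L^p)\cong H^0(\widetilde X,\pi^*L^p)$, descend to an embedding of $X$ --- and this is exactly what Grauert's theorem supplies directly, so in the proof I would verify positivity as above and appeal to it.
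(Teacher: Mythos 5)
Your overall route is exactly the paper's: the paper proves this proposition purely by the citation \cite[Satz\,2, p.\,343]{Gra:62} (see also \cite[Satz\,3, p.\,346]{Gra:62}), and your proof is that same citation, prefaced by an attempted verification that the curvature hypothesis yields positivity in Grauert's sense. Your closing paragraph, on why a resolution-plus-Kodaira argument is delicate (Moishezon does not imply projective, and projectivity of $\widetilde X$ does not descend to $X$), is accurate and consistent with how the paper handles Proposition \ref{P:Moishezon}.

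However, the verification in your first paragraph has a genuine gap. The hypothesis $c_1(L,h)\geq\varepsilon\omega$ is an inequality on $X$: in a chart $\tau:U\to V\subset G\subset\C^N$ it controls the Levi form of $\widetilde\varphi_\alpha$ only after restriction to the tangent spaces $T_zV_{\rm reg}\subset\C^N$ at points $z\in\tau(U\cap X_{\rm reg})$. It gives no information about $dd^c\widetilde\varphi_\alpha$ in directions transverse to $V$, because the transverse behavior depends on the choice of extension: if $f$ is holomorphic and vanishes on $V$, then $\widetilde\varphi_\alpha-C|f|^2$ is another $\cC^2$ extension of the same weight $\varphi_\alpha$, with Levi form as negative as you like in directions transverse to $\{f=0\}$. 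Thus the assertion that $dd^c\widetilde\varphi_\alpha\geq\varepsilon\widetilde\omega$ holds as forms on $\C^N$ at points of $\tau(U\cap X_{\rm reg})$ is unjustified even at regular points, and the subsequent ``by continuity'' step collapses. At a singular point $q$ matters are worse: the limits of the tangent spaces $T_zV_{\rm reg}$ form a cone that can be strictly smaller than the Zariski tangent space (for the cusp $\{z_1^2=z_2^3\}$ this cone at $0$ is a single line, while $T_0^{\rm Zar}=\C^2$), so even after correcting the extension by $C\sum_j|f_j|^2$, with $f_j$ generating the ideal of $V$ (whose Levi form at $q$ vanishes on all of $T_q^{\rm Zar}V$), no positivity is obtained on the remaining Zariski-tangent directions. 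The passage from ``curvature $\geq\varepsilon\omega$ along $X$ for a $\cC^2$ metric'' to ``local weights admitting strictly psh ambient extensions'' (equivalently, to strong pseudoconvexity of the unit disc bundle in $L^*$, which is Grauert's actual hypothesis) is precisely the nontrivial bridge here; it requires a genuinely cleverer choice of extension or Richberg/Forn\ae ss--Narasimhan type results \cite{Ric68,FN80}, and it is part of what the appeal to \cite{Gra:62} is implicitly carrying. As written, your first paragraph claims this bridge follows from pointwise continuity, which it does not; either prove it as a separate lemma or formulate the reduction so that it is absorbed into the reference to Grauert, as the paper does.
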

This follows from \cite[Satz\,2,\,p.343]{Gra:62}, 
see also \cite[Satz\,3,\,p.346]{Gra:62}. If $X$ is normal, 
Grauert actually shows more: if $\Omega$ is a continuous 
K\"ahler metric whose de Rham cohomology class is integral,
 then there exists a Hermitian holomorphic line bundle $(L,h)$ 
 with $\cC^2$ Hermitian metric and $c_1(L,h)=\Omega$; 
 hence $L$ is ample and $X$ is projective.  

\subsection{$L^2$-estimates for $\db$}\label{SS:db} 
The following version of Demailly's estimates for the $\db$ operator
\cite[Th\'eor\`eme 5.1]{D82} will be needed in our proofs.

\begin{Theorem}[{\cite{D82}}]\label{T:db} Let $Y$, $\dim Y=n$,
    be a complete K\"ahler manifold and let $\Omega$ be a K\"ahler
    form on $Y$ (not necessarily complete) such that its Ricci form 
    $\ric_\Omega\geq-2\pi B\Omega$ on $Y$, for some constant 
    $B>0$. Let $(L_p,h_p)$ be singular Hermitian holomorphic line 
    bundles on $Y$ such that $c_1(L_p,h_p)\geq2a_p\Omega$, 
    where $a_p\to\infty$ as $p\to\infty$, and fix $p_0$ such that 
    $a_p\geq B$ for all $p>p_0$. If $p>p_0$ and 
    $g\in L^2_{0,1}(Y,L_p,loc)$ verifies $\db g=0$ and 
    $\int_Y|g|^2_{h_p}\,\Omega^n<\infty$ then there exists 
    $u\in L^2_{0,0}(Y,L_p,loc)$ such that $\db u=g$ and 
    $\int_Y|u|^2_{h_p}\,\Omega^n\leq\frac{1}{a_p}\,
    \int_Y|g|^2_{h_p}\,\Omega^n$. 
\end{Theorem}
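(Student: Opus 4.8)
The statement is the classical H\"ormander--Andreotti--Vesentini--Demailly $L^2$ existence theorem for $\db$, in the precise form established by Demailly in \cite{D82}, so my plan is to deduce it from \cite[Th\'eor\`eme 5.1]{D82} by checking that the curvature hypotheses there reduce to the ones stated here. The first step is to pass from $L_p$-valued $(0,1)$-forms to canonical-bundle-twisted objects: an $L_p$-valued $(0,1)$-form $g$ corresponds canonically to an $(n,1)$-form with values in $E_p:=L_p\otimes K_Y^{-1}$, where $K_Y$ denotes the canonical bundle of $Y$, and under this identification the $L^2$-norms computed with respect to $\Omega$ coincide. This is the standard device that lets one work with $(n,q)$-forms, for which the Bochner--Kodaira--Nakano estimate is governed purely by the curvature operator, with no obstructing torsion terms.

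On $(n,1)$-forms valued in $E_p$ the relevant operator entering the a priori estimate is $[i\Theta(E_p),\Lambda]$, and since $i\Theta(E_p)=i\Theta(L_p)+\ric_\Omega$ (the twist by $K_Y^{-1}$ contributes exactly the Ricci form of $\Omega$), the two hypotheses combine. From $c_1(L_p,h_p)\geq2a_p\,\Omega$ together with $\ric_\Omega\geq-2\pi B\,\Omega$ one obtains, for $p>p_0$ where $a_p\geq B$, a net curvature lower bound of the form $a_p\,\Omega$ (in the normalization $c_1=\tfrac{i}{2\pi}\Theta$), because $2a_p-B\geq a_p$; the factor $2$ in the hypothesis on $c_1(L_p,h_p)$ is precisely what absorbs the Ricci contribution. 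Consequently $[i\Theta(E_p),\Lambda]$ is bounded below by a positive multiple of $a_p$ on $(n,1)$-forms, hence invertible, and in Demailly's normalization the resulting H\"ormander constant is $1/a_p$, giving $\db u=g$ with $\int_Y|u|^2_{h_p}\,\Omega^n\leq\frac{1}{a_p}\int_Y|g|^2_{h_p}\,\Omega^n$.

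The one hypothesis that is not pointwise is completeness: the theorem requires $Y$ to carry \emph{some} complete K\"ahler metric, but crucially not that the metric $\Omega$ measuring curvatures and norms be itself complete. I would invoke this directly, as it is the main technical content of \cite{D82} and is exactly what permits application on open manifolds such as $\widetilde X\setminus\Sigma$, where $\Omega$ may degenerate while a complete background metric is still available. Given completeness, one truncates with cutoff functions whose gradients are controlled by the complete metric (and, if needed, replaces $\Omega$ by $\Omega+\varepsilon\,\widehat\omega$ and passes to the limit), which validates the a priori inequality for forms in $\Dom(\db)\cap\Dom(\db^*)$ via Bochner--Kodaira--Nakano; the usual duality argument then produces the solution $u$ with the asserted bound.

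I expect the main difficulty to be bookkeeping rather than a genuine obstacle. First, $h_p$ is merely singular, so $i\Theta(E_p)$ is a current and the curvature inequality must be read in the sense of currents, with the a priori estimate justified through regularization exactly as in \cite{D82}. Second, one must track the several normalization constants---the $2\pi$ relating $c_1$ to $i\Theta$, the Ricci normalization $\ric_\Omega\geq-2\pi B\,\Omega$, and the choice of volume element $\Omega^n$---so that they conspire to yield the clean constant $\frac{1}{a_p}$, the factor $2$ in the hypothesis providing exactly the room needed for the Ricci term.
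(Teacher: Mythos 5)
Your proposal is correct and follows essentially the same route as the paper's proof: both pass from $L_p$-valued $(0,1)$-forms to $(n,1)$-forms valued in $F_p=L_p\otimes K_Y^{-1}$ (with $K_Y$ metrized by $\Omega$), so that the curvature becomes $c_1(L_p,h_p)+\frac{1}{2\pi}\ric_\Omega\geq(2a_p-B)\Omega\geq a_p\Omega$ for $p>p_0$, and then invoke \cite[Th\'eor\`eme 5.1]{D82}, whose completeness hypothesis concerns an auxiliary complete metric rather than $\Omega$ itself. Your additional remarks on regularization for singular metrics and normalization constants are consistent with what the cited theorem handles.
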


\begin{proof} We write $L_p=F_p\otimes K_Y$, where 
    $F_p=L_p\otimes K^{-1}_Y$, and 
the canonical line bundle $K_Y$ is endowed with the metric 
$h^{K_Y}$ induced by $\Omega$. 
If $\theta_p=h_p\otimes h^{K^{-1}_Y}$ is the metric induced on 
$F_p$ then 
$$c_1(F_p,\theta_p)=c_1(L_p,h_p)-c_1(K_Y,h^{K_Y})
=c_1(L_p,h_p)+\frac{1}{2\pi}\,\ric_\Omega\geq(2a_p-B)\Omega
\geq a_p\Omega$$
for $p>p_0$. The theorem follows by using the isometries 
$L^2_{0,j}(Y,L_p,loc)\cong L^2_{n,j}(Y,F_p,loc)$, $j=0,1$, and 
applying \cite[Th\'eor\`eme 5.1]{D82}
(see also \cite[Corollaries 4.2 and 4.3]{CM13}).
\end{proof}

\subsection{Special weights of Hermitian metrics on reference 
covers}\label{SS:refcov} Let \((X,\omega)\) be a compact K\"ahler 
manifold of dimension $n$. Let $(U,z)$, $z=(z_1,\ldots,z_n)$, 
be local coordinates centered at a point $x\in X$. For $r>0$ and 
$y\in U$ we denote by
\[\Delta^n(y,r)=\{z\in U: |z_j-y_j|\leq r,\:j=1,\ldots,n\}\]
the (closed) polydisk of polyradius $(r,\ldots,r)$ centered at $y$.
The coordinates $(U,z)$ are called K\"ahler at $y\in U$ if 
\begin{equation}
\omega_z=\frac i2\sum_{j=1}^{n}dz_j\wedge d\overline{z}_j
+O(|z-y|^2)\:\:\text{on \(U\)}.
\end{equation}

\begin{Definition}\label{D:refcov}
A \emph{reference cover} of \(X\) consists of the following data: 
for $j=1,\ldots,N$, a set of points \(x_j\in X\) and 
\begin{enumerate}
  \item  Stein open simply connected coordinate neighborhoods 
  \((U_j,w^{(j)})\) centered at \(x_j\equiv0\),
  \item  \(R_j>0\) such that \(\Delta^n(x_j,2R_j)\Subset U_j\) 
  and for every \(y\in\Delta^n(x_j,2R_j)\) there exist coordinates 
  on \(U_j\) which are K\"ahler at \(y\),
  \item \(X=\bigcup_{j=1}^N\Delta^n(x_j,R_j)\). 
\end{enumerate}
Given the reference cover as above we set \(R=\min R_j\).
\end{Definition}

\par We can construct a reference cover as follows:
For \(x\in X\) fix a Stein open simply connected neighborhood 
\(U\) of \(x\equiv0\in\C^{n}\) and fix \(R>0\) such that 
the polydisk \(\Delta^n(x,2R)\Subset U\) and for every
\(y\in\Delta^n(x,2R)\) there exist coordinates \((U,z)\) which
are K\"ahler at \(y\). By compactness there exist
\(x_1,\ldots,x_N\in X\) such that the above conditions are fulfilled.

On $U_j$ we consider the partial derivatives $D^\alpha_w$ of order $|\alpha|$, $\alpha\in\N^{2n}$, corresponding to the real coordinates associated to $w=w^{(j)}$. For a function $\varphi\in\cC^k(U_j)$ we set 
\begin{equation}\label{bk:0.1}
\|\varphi\|_{k}=\|\varphi\|_{k,w}
=\sup\big\{|D^\alpha_w\varphi(w)|:\,w\in\Delta^n(x_j,2R_j),
|\alpha|\leq k\big\}.
\end{equation}
Let \((L,h)\) be a Hermitian holomorphic line bundle on \(X\), 
where 
the metric $h$ is of class $\cC^\ell$. Note that \(L|_{U_j}\) is 
trivial. For $k\leq\ell$ set
\begin{equation}\label{bk:0.2}
\begin{split}
\|h\|_{k,U_j}&=\inf\big\{\|\varphi_j\|_k:\,\varphi_j
\in \cC^\ell(U_j)\text{ is a weight of $h$ on $U_j$}\big\},\\
\|h\|_k&=\max\big\{1,\|h\|_{k,U_j}:\,1\leq j\leq N\big\}.
\end{split}
\end{equation}
Recall that \(\varphi_j\) is a weight of \(h\) on \(U_j\) if there
exists a holomorphic frame \(e_j\) of \(L\) on \(U_j\) 
such that \(|e_j|_h=e^{-\varphi_j}\).

\begin{Lemma}\label{L:rc}
There exists \(C>1\) (depending on the reference cover) with 
the following property: 
Given any Hermitian line bundle \((L,h)\) on \(X\), where $h$ is of class $\cC^3$, any 
\(j\in\{1,\ldots,N\}\) 
and any \(x\in\Delta^n(x_j,R_j)\) there exist coordinates 
\(z=(z_1,\ldots,z_n)\) on \(\Delta^n(x,R)\) which are centered 
at \(x\equiv0\) and K\"ahler coordinates for \(x\) such that

\par (i) $dm\leq(1+Cr^2)\,\omega^n/n!$ and 
$\omega^n/n!\leq(1+Cr^2)\,dm$ hold
 on \(\Delta^n(x,r)\) for any \(r<R\) where \(dm=dm(z)\)
  is the Euclidean volume relative to the coordinates $z$\,,
  
\par (ii) \((L,h)\) has a weight \(\varphi\) on \(\Delta^n(x,R)\) 
with \(\varphi(z)=\sum_{j=1}^n\lambda_j|z_j|^2
+\widetilde{\varphi}(z)\), where \(\lambda_j\in\R\) and 
\(|\widetilde{\varphi}(z)|\leq C\|h\|_3|z|^3\) for 
\(z\in\Delta^n(x,R)\). 
\end{Lemma}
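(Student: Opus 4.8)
The plan is to build the coordinates $z$ in two independent stages, keeping the $\omega$-dependent and the $h$-dependent parts strictly separate so that the final constant $C$ picks up only the reference cover. First I would produce K\"ahler normal coordinates centered at $x$; then I would apply a unitary linear change, depending on $h$, that diagonalizes the complex Hessian of a weight of $h$ at $x$. Estimate (i) will come from the normal coordinates alone, and the unitary change will be seen to preserve it; estimate (ii) will come from Taylor's theorem applied to the weight, with the linear scaling in $\|h\|_3$ reflecting that $\widetilde\varphi$ is a pure third-order remainder.

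For the first stage, since $x\in\Delta^n(x_j,R_j)\subset\Delta^n(x_j,2R_j)$, item (2) of the definition of a reference cover furnishes coordinates on $U_j$ that are K\"ahler at $x$; translating so that $x\equiv0$ yields holomorphic coordinates $z^0$ with $\omega_{z^0}=\frac i2\sum_l dz^0_l\wedge d\overline{z^0_l}+O(|z^0|^2)$, equivalently $g_{l\overline m}(z^0)=\delta_{lm}+O(|z^0|^2)$. Concretely this change can be taken to be a polynomial map whose linear part normalizes $g(x)$ to the identity and whose holomorphic quadratic part annihilates the first derivatives of the metric at $x$, the latter being possible precisely because of the K\"ahler symmetry $\partial_\gamma g_{\alpha\overline\beta}=\partial_\alpha g_{\gamma\overline\beta}$. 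Its coefficients are algebraic in the $1$-jet of $\omega$ at $x$; as $\omega$ is fixed, there are finitely many charts, and $x$ ranges over the compact sets $\overline{\Delta^n(x_j,R_j)}$, the map, its inverse, their derivatives up to order $3$, and the constant in the remainder $g_{l\overline m}(z^0)-\delta_{lm}=O(|z^0|^2)$ are all bounded uniformly by a constant depending only on the reference cover. Part (i) is then immediate: writing $\omega^n/n!=\det(g_{l\overline m}(z^0))\,dm(z^0)$, the bound $|g_{l\overline m}(z^0)-\delta_{lm}|\leq C_1|z^0|^2$ gives $|\det(g(z^0))-1|\leq C_1r^2$ on $\Delta^n(x,r)$, which yields the two inequalities in (i) after enlarging the constant.

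For the second stage I would fix a weight $\varphi_0\in\cC^3(U_j)$ of $h$ with $\|\varphi_0\|_3\leq 2\|h\|_3$ and read it in the $z^0$ coordinates; by the uniform control of the change of coordinates, the third derivatives of $\varphi_0$ in $z^0$ are $\leq C\|h\|_3$. Since $U_j$ is simply connected, any two weights of $h$ differ by $\re$ of a holomorphic function, so I may subtract from $\varphi_0$ the real part of the holomorphic polynomial given by its constant, linear and holomorphic-quadratic $2$-jet at $0$; this is again a weight of $h$, it changes no third derivative, and it reduces the $2$-jet to the (Hermitian) complex Hessian $\sum_{l,m}\lambda_{l\overline m}z^0_l\overline{z^0_m}$. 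Finally I would apply the unitary $z=Uz^0$ diagonalizing $(\lambda_{l\overline m})$, turning this term into $\sum_j\lambda_j|z_j|^2$ with $\lambda_j\in\R$. The compatibility point is that a unitary change preserves $g(0)=I$ and the vanishing of the first derivatives of the metric, hence preserves the normal-coordinate property and the estimate (i), while having operator norm $1$ and therefore not enlarging the third-derivative bound. Setting $\widetilde\varphi(z)=\varphi(z)-\sum_j\lambda_j|z_j|^2$, which is exactly $\varphi$ minus its second-order Taylor polynomial at $0$, Taylor's theorem with remainder yields $|\widetilde\varphi(z)|\leq C\|h\|_3|z|^3$ on $\Delta^n(x,R)$.

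The hard part is not any single computation but the bookkeeping of uniformity: one must verify that everything carrying jet information of the metric (the normal coordinates) is controlled once and for all by the compactness built into the reference cover, while everything depending on $h$ (the diagonalizing unitary and the subtraction of the $2$-jet of the weight) contributes nothing to $C$ --- the unitary because it is norm-preserving and respects the normalizations of the first stage, and the polynomial subtraction because removing a degree-$\leq2$ term leaves third derivatives untouched. It is exactly this separation that makes (ii) scale linearly in $\|h\|_3$ with a constant depending only on the reference cover.
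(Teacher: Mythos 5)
Your proposal is correct and follows essentially the same route as the paper's proof: K\"ahler coordinates at $x$ supplied by the reference cover give (i), the pluriharmonic part of the weight's $2$-jet is removed by subtracting $\re f$ for a holomorphic polynomial $f$ of degree $\leq 2$ (which the paper realizes as the frame change $\widetilde e_j=e^f e_j$), and a unitary change of coordinates diagonalizes the remaining Hermitian Hessian, with Taylor's theorem giving the cubic bound in (ii). The extra details you supply (compactness argument for uniformity, the determinant estimate for (i), the observation that the unitary preserves both normalizations) are exactly the points the paper leaves implicit, so there is no substantive difference.
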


\begin{proof}
By the properties of a reference cover there exist coordinates 
$z$ on \(U_j\) which are K\"ahler for \(x\in\Delta^n(x_j,R_j)\) so 
$\omega=\frac i2\sum_{l=1}^{n}dz_l\wedge d\overline{z}_l
+O(|z-x|^2)$ and $(i)$ holds with a constant \(C_j\) uniform for
\(x\in\Delta^n(x_j,R_j)\). Let \(e_j\) be a frame of \(L\) on 
\(U_j\) and \(\varphi'\) be a weight of \(h\) on \(U_j\) with
$|e_j|_h=e^{-\varphi'}$ and $\|\varphi'\|_{3,w}\leq2\|h\|_3$, 
cf.\ \eqref{bk:0.1}-\eqref{bk:0.2}. By translation we may assume 
\(x=0\) and write 
$\varphi^\prime (z) = \operatorname{Re} f(z)+ \varphi^\prime_2(z) 
+ \varphi^\prime_3 (z)$, where $f(z)$ is a holomorphic polynomial 
of degree $\leq 2$ in $z$, 
$\varphi^\prime_2(z)= \sum_{k,l=1}^n \mu_{kl} z_k \bar{z}_l$,
and $\operatorname{Re} f(z) + \varphi^\prime_2(z)$ is the 
Taylor polynomial of order $2$ of $\varphi'$ at $0$. Note that
$\| \varphi^\prime \|_{3,z} 
\leq C^\prime_j \|\varphi^\prime\|_{3,w} 
\leq 2C^\prime_j \|h\|_3$, where $\|\varphi^\prime\|_{3,z}$ 
is the sup norm on $\Delta^n(x,R)$ of the derivatives of order 
$3$ of $\varphi^\prime$ in the coordinates $z$ and $C^\prime_j$ 
is a constant uniform for $x \in \Delta^n(x_j, R_j)$. This follows 
from the fact that $z,w$ are coordinates on
$U_j \Supset \Delta^n(x_j, 2R_j)$. We conclude that
$|\varphi^\prime_3(z)| \leq 2C_j^\prime \|h\|_3 |z|^3$ for
$z \in \Delta^n(x,R)$. 

\par Consider the frame $\widetilde{e}_j = e^fe_j$ of $L$ on 
$U_j$. Then $|\widetilde{e}_j|_h 
= e^{\operatorname{Re} f - \varphi^\prime} = e^{-\varphi}$,
so $\varphi(z) := \varphi^\prime_2(z) + \varphi_3^\prime(z)$
is a weight of $h$ on $U_j$. By a unitary change of coordinates 
we may assume that $\varphi (z) = \sum_{l=1}^n \lambda_l |z_l|^2 
+ \widetilde{\varphi}(z)$. In these new coordinates $\omega^n/n !$ 
and $\widetilde{\varphi}(z)$ satisfy the desired estimates with 
a constant $C_j$ uniform for 
$x \in \Delta^n(x_j, R_j)$. Finally we let 
$C = \max \limits_{1\leq j\leq N} C_j$.
\end{proof}

\section{Proofs of Theorems \ref{T:mt1}, \ref{T:mt2} and 
\ref{T:mt3}}\label{S:pfmt}

\par We use here the notations introduced in Section \ref{S:prelim}
and we start 
with two lemmas that will be needed in the proof of 
Theorem \ref{T:mt1}:

\begin{Lemma}\label{L:divm} Let $D$ be a divisor in 
    a complex manifold $Y$ and $\theta$ be a smooth 
    Hermitian metric on $\mathcal O(-D)$ over $Y$ with weight 
    $\varphi$ over $Y\setminus D$. Then 
    $\displaystyle \lim_{y\to x,\,y\in Y\setminus D}\varphi(y)
    =-\infty$ for every $x\in D$.
\end{Lemma}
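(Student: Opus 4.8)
The plan is to exhibit $\varphi$ as the sum of a smooth (hence locally bounded) weight and the logarithm of a local equation of $D$, so that all the singularity is carried by the logarithmic term. Since both the statement and the conclusion are local near points of $D$, I would fix $x\in D$ and argue in a neighborhood. First I would pin down the meaning of $\varphi$: over $Y\setminus D$ the bundle $\mathcal O(-D)$ carries a canonical nowhere-vanishing holomorphic frame $\sigma=s_D^{-1}$, where $s_D$ is the canonical holomorphic section of $\mathcal O(D)$ with $\operatorname{div}(s_D)=D$ (so $s_D$ is holomorphic and nonvanishing off $D$). By definition $\varphi$ is the weight of $\theta$ in this frame, i.e.\ $|\sigma|^2_\theta=e^{-2\varphi}$ on $Y\setminus D$, following the convention $|e|^2_h=e^{-2\varphi}$ used in the paper.

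Next I would choose a connected coordinate neighborhood $U$ of $x$ on which $D$ is cut out by a single holomorphic function $f$, that is $D\cap U=\{f=0\}$ with $f$ a generator of the locally principal ideal of $D$; such $f$ exists because a divisor on a manifold is locally principal. Because $\theta$ is smooth on all of $Y$ (in particular across $D$), a holomorphic frame $e$ of $\mathcal O(-D)$ on $U$ has a smooth weight $\psi$, namely $|e|^2_\theta=e^{-2\psi}$, and $\psi$ is bounded on a neighborhood of $x$. The crucial identity is that on $U\setminus D$ one has $\sigma=f^{-1}e$ (this is exactly the statement that $s_D$ is represented by $f$ in the frame dual to $e$). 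Hence
\[
e^{-2\varphi}=|\sigma|^2_\theta=|f|^{-2}\,|e|^2_\theta=|f|^{-2}e^{-2\psi},
\qquad\text{so}\qquad \varphi=\log|f|+\psi\ \text{ on }U\setminus D.
\]

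To conclude, I would let $y\to x$ with $y\in Y\setminus D$: the smooth term $\psi(y)$ stays bounded while $f(y)\to f(x)=0$, so $\log|f(y)|\to-\infty$ dominates and $\varphi(y)\to-\infty$. As $x\in D$ is arbitrary, this gives the claim.

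The computation is elementary, so the only points that deserve care are the correct identification of the canonical frame of $\mathcal O(-D)$ over $Y\setminus D$ together with the accompanying sign—so that $f\to0$ forces $\varphi\to-\infty$ rather than $+\infty$—and the fact that $D$, though possibly singular, is locally principal, which is what furnishes the single holomorphic equation $f$. The other ingredient that must be used explicitly is the smoothness of $\theta$ across $D$, which guarantees that the local weight $\psi$ is bounded and hence that the logarithmic pole of $f$ is the sole source of the divergence.
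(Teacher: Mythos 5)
Your proof is correct and follows essentially the same route as the paper: both identify the frame of $\mathcal O(-D)$ over $Y\setminus D$ whose transition to a local frame near $x\in D$ is $1/f$ for a local defining function $f$ of $D$, deduce the decomposition $\varphi=\psi+\log|f|$ with $\psi$ smooth (the paper writes this as $\varphi=\varphi_\alpha+\log|f_\alpha|$ via the transition-function rule for weights), and conclude since $\log|f(y)|\to-\infty$ while the smooth term stays bounded. Your explicit identification of the canonical frame as $s_D^{-1}$ and the sign check are exactly the content of the paper's transition-function computation, just phrased in terms of the canonical section rather than cocycles.
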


\begin{proof} Let $U_\alpha$ be a neighborhood of $x$ where 
$D$ has defining function $f_\alpha\in\mathcal O(U_\alpha)$ 
 and $\theta$ has weight $\varphi_\alpha\in \cC^\infty(U_\alpha)$. 
The transition function of $\mathcal O(-D)$ on 
$U_\alpha\cap(Y\setminus D)$ is $g=1/f_\alpha$, so 
$\varphi_\alpha=\varphi+\log|g|$ and 
$\varphi=\varphi_\alpha+\log|f_\alpha|$. 
\end{proof}

\begin{Lemma}\label{L:tildehp} Let $(X,\omega),\,(L_p,h_p)$ verify 
assumptions (A)-(B), and let $F=\mathcal O_{\wi X}(-\Sigma)$, 
$\theta$, $\Omega=C\pi^*\omega+c_1(F,\theta)$ be as in 
Lemma \ref{L:Moishezon}. Then there exist $\alpha\in(0,1)$, 
$b_p\in\mathbb N$, 
and singular Hermitian metrics $\wi h_p$ on 
$\pi^* L_p\vert_{\wi X\setminus\Sigma}$ 
such that $a_p\geq Cb_p\,$, $b_p\to\infty$ and $b_p/A_p\to0$ 
as $p\to\infty$, $\wi h_p\geq\alpha^{b_p}\pi^* h_p$ and 
 $c_1(\pi^* L_p,\wi h_p)\geq b_p\Omega$ on
 $\wi X\setminus\Sigma$. 
 Moreover, for every open relatively compact subset $\wi U$ 
 of $\wi X\setminus\Sigma$ there exists a constant 
 $\beta_{\wi U}>1$ such that 
 $\wi h_p\leq\beta_{\wi U}^{b_p}\pi^* h_p$ on $\wi U$.  
\end{Lemma}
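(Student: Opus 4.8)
The plan is to obtain $\wi h_p$ from $\pi^* h_p$ by twisting with $b_p$ copies of the smooth metric $\theta$ on $F=\mathcal O_{\wi X}(-\Sigma)$. The point is that the pullback metric only yields the degenerate lower bound $\pi^* c_1(L_p,h_p)\geq a_p\pi^*\omega$, with $\pi^*\omega$ collapsing along $\Sigma$, whereas we need a bound by the genuine K\"ahler form $\Omega=C\pi^*\omega+c_1(F,\theta)$ from Lemma \ref{L:Moishezon}; adding $b_p\,c_1(F,\theta)$ to the curvature converts the former into the latter. Since $F$ is canonically trivial on $\wi X\setminus\Sigma$, this twist is just multiplication of $\pi^* h_p$ by a weight. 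Concretely, let $\varphi$ be the weight of $\theta$ relative to the canonical holomorphic frame of $F$ over $\wi X\setminus\Sigma$ (dual to the section of $\mathcal O_{\wi X}(\Sigma)$ cutting out $\Sigma$), so that $\varphi$ is smooth on $\wi X\setminus\Sigma$ and $c_1(F,\theta)=dd^c\varphi$ there. By Lemma \ref{L:divm} with $D=\Sigma$ we have $\varphi(y)\to-\infty$ as $y\to\Sigma$, and together with the continuity of $\varphi$ away from $\Sigma$ this gives $M:=\sup_{\wi X\setminus\Sigma}\varphi<\infty$. I would then set
\[
\wi h_p=e^{-2b_p\varphi}\,\pi^* h_p\quad\text{on }\wi X\setminus\Sigma .
\]

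Before this I fix the integers $b_p$. Since $c_1(L_p,h_p)-a_p\omega\geq0$ and $\omega^{n-1}\geq0$, we get $A_p\geq a_p\int_X\omega^n$, so $A_p\to\infty$. I then take $b_p=\big\lfloor\min\!\big(a_p/C,\sqrt{A_p}\big)\big\rfloor$: this forces $Cb_p\leq a_p$ directly, while $b_p\to\infty$ because $a_p,A_p\to\infty$, and $b_p/A_p\leq 1/\sqrt{A_p}\to0$. The finitely many small $p$ for which the floor vanishes are immaterial for the asymptotic assertions.

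The curvature estimate is the heart of the matter and is where the budget $Cb_p\leq a_p$ is spent. On $\wi X\setminus\Sigma$ the metric $\wi h_p$ has weight $\pi^*\varphi_p+b_p\varphi$, with $\varphi_p$ a local weight of $h_p$, whence
\[
c_1(\pi^* L_p,\wi h_p)=\pi^* c_1(L_p,h_p)+b_p\,c_1(F,\theta)\geq a_p\pi^*\omega+b_p(\Omega-C\pi^*\omega)=b_p\Omega+(a_p-Cb_p)\pi^*\omega\geq b_p\Omega,
\]
using $\pi^* c_1(L_p,h_p)\geq a_p\pi^*\omega$, the identity $\Omega=C\pi^*\omega+c_1(F,\theta)$, and $a_p\geq Cb_p$. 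In particular the weight is psh, so $\wi h_p$ is a genuine singular Hermitian metric.

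The two comparisons then reduce to bounds on $\varphi$. For the lower bound, pick $\alpha\in(0,1)$ with $-\tfrac12\log\alpha\geq M$; since $\varphi\leq M$ this gives $e^{-2b_p\varphi}\geq\alpha^{b_p}$, i.e.\ $\wi h_p\geq\alpha^{b_p}\pi^* h_p$ throughout $\wi X\setminus\Sigma$. For the upper bound over a relatively compact $\wi U\Subset\wi X\setminus\Sigma$, the function $\varphi$ is bounded below on $\wi U$, say $\varphi\geq m_{\wi U}$, and any $\beta_{\wi U}>\max(1,e^{-2m_{\wi U}})$ yields $e^{-2b_p\varphi}\leq\beta_{\wi U}^{b_p}$ on $\wi U$. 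Note that hypothesis \eqref{e:domin0} plays no role here, since no upper control near $\Sigma$ is required. The only real obstacle is the bookkeeping: the single inequality $a_p\geq Cb_p$ must simultaneously deliver the curvature floor $b_p\Omega$ and still leave $b_p=o(A_p)$ with $b_p\to\infty$ for the later renormalization by $A_p$; and the upper comparison necessarily degrades to relatively compact sets precisely because $\varphi\to-\infty$, hence $e^{-2b_p\varphi}\to+\infty$, along $\Sigma$.
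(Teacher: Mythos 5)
Your proposal is correct and follows essentially the same route as the paper: twisting $\pi^* h_p$ by $e^{-2b_p\varphi}$ (equivalently, tensoring with $(F^{b_p},\theta^{b_p})$ and using the trivialization of $F$ off $\Sigma$), spending the budget $a_p\geq Cb_p$ to convert $a_p\pi^*\omega+b_pc_1(F,\theta)$ into $b_p\Omega$, and using Lemma \ref{L:divm} plus compactness for the two-sided comparison of $\wi h_p$ with $\pi^* h_p$. Your explicit choice $b_p=\big\lfloor\min(a_p/C,\sqrt{A_p})\big\rfloor$, justified by $A_p\geq a_p\int_X\omega^n\to\infty$, merely makes precise the paper's final sentence that such a sequence $b_p$ can be chosen.
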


\begin{proof} If $h',\,h''$ are singular Hermitian metrics on some 
    holomorphic line bundle $G$ then by $h'\geq ch''$ we mean that
    $|e|^2_{h'}\geq c|e|^2_{h''}$ for any $e\in G$. Consider the line 
    bundles $E_p=\pi^* L_p\otimes F^{b_p}$ with metrics 
    $\theta_p=\pi^* h_p\otimes\theta^{b_p}$, where 
    $b_p\in\mathbb N$. Then 
$$c_1(E_p,\theta_p)=\pi^* c_1(L_p,h_p)+b_pc_1(F,\theta)\geq 
a_p\pi^*\omega
+b_pc_1(F,\theta)\geq b_p\Omega\,,$$
provided that $a_p\geq Cb_p$. Since $F$ is trivial on 
$\wi X\setminus\Sigma$ 
we have $\pi^* L_p\vert_{\wi X\setminus\Sigma}\cong E_p
\vert_{\wi X\setminus\Sigma}$ 
and we can find a smooth weight $\varphi$ of $\theta$ on 
$\wi X\setminus\Sigma$ by setting $|f|_\theta^2=e^{-2\varphi}$,
where $f$ is a holomorphic frame of $F$ 
on $\wi X\setminus\Sigma$. Let $\wi h_p$ be the metric of
 $\pi^* L_p\vert_{\wi X\setminus\Sigma}$ defined by 
 $\wi h_p=e^{-2b_p\varphi}\pi^* h_p$. Then 
 $c_1(\pi^* L_p,\wi h_p)=c_1(E_p,\theta_p)\geq b_p\Omega$ on 
 $\wi X\setminus\Sigma$. Since 
 $\varphi\in \cC^\infty(\wi X\setminus\Sigma)$ 
 and by Lemma \ref{L:divm} $\varphi(y)\to-\infty$ as $y\to\Sigma$,
  it follows that there exists $\alpha\in(0,1)$ such that 
  $e^{-2\varphi}\geq\alpha$ 
  on $\wi X\setminus\Sigma$. Moreover, if $\wi U$ is an open 
relatively compact subset of $\wi X\setminus\Sigma$, there exists 
$\beta_{\wi U}>1$ such that $e^{-2\varphi}\leq\beta_{\wi U}$ 
on $\wi U$. These imply that $\wi h_p\geq\alpha^{b_p}\pi^* h_p$ 
on $\wi X\setminus\Sigma$ and 
  $\wi h_p\leq\beta_{\wi U}^{b_p}\pi^* h_p$ on $\wi U$. 
The lemma follows if we choose $b_p\in\mathbb N$ such that 
$a_p\geq Cb_p$, $b_p\to\infty$ 
  and $b_p/A_p\to0$ as $p\to\infty$. 
\end{proof}

\smallskip

\begin{proof}[Proof of Theorem \ref{T:mt1}] Note that $(ii)$
    follows at once from $(i)$ by using \eqref{e:BFS2}. 
    The proof of $(i)$ will be done in two steps. 

\smallskip

\par {\em Step 1.} We show here that 
$\frac{1}{A_p}\,\log P_p\to0$ as $p\to\infty$ in 
$L^1_{loc}(X_{\rm reg},\omega^n)$. Fix $x\in X_{\rm reg}$, 
$W\Subset X_{\rm reg}$ a contractible Stein coordinate 
neighborhood of $x$, $r_0>0$ such that the (closed) ball 
$V:=B(x,2r_0)\subset W$, and set $U:=B(x,r_0)$. 

Note that on a Stein manifold $M$ we have $H^1(M,\cO^*)\cong H^2(M,\Z)$
due to Cartan's theorem B (see e.\,g.\ \cite[p.\,201]{Ho1}), thus
any holomorphic line bundle $L$ over a Stein contractible
manifold $\Omega$ is holomorphically trivial
(this is due to \cite{Oka}, and is of course a special case of the
Oka-Grauert principle). 
Thus there exist local holomorphic frames $e'_p:W\longrightarrow L_p$, for all $p$. 
Let $\psi'_p$ be the corresponding psh weights of $h_p$ on $W$, 
$|e'_p|^2_{h_p}=e^{-2\psi'_p}$. The sequence of currents 
$\{\frac{1}{A_p}\,c_1(L_p,h_p)\}$ has uniformly bounded mass, 
so it follows from \cite[Proposition A.16]{DS10} 
(see also \cite{DNS08}) that there exist psh functions $\psi_p$ 
on $\inte V$ such that $dd^c\psi_p=c_1(L_p,h_p)$ and 
the sequence $\{\frac{1}{A_p}\,\psi_p\}$ is bounded, hence 
relatively compact \cite[Theorem 3.2.12]{Ho2}, in 
$L^1_{loc}(\inte V,\omega^n)$. Since $\psi'_p-\psi_p$ is
pluriharmonic, we have $\psi'_p-\psi_p=\re f_p$ for some function
$f_p\in\mathcal O(\inte V)$. Considering the local frames 
$e_p=e^{f_p}e'_p$ of $L_p\vert_{\inte V}$, we obtain 
$$|e_p|^2_{h_p}=e^{2\re f_p}|e'_p|^2_{h_p}=e^{-2\psi_p},$$
i.e.,\ $\psi_p$ is the psh weight of $h_p$ relative to the frame 
$e_p$. 

\par Let $\{b_p\}$ be as in Lemma \ref{L:tildehp}. We prove that 
there exist 
$C'>1$ and $p_0\in\mathbb N$ such that 
\begin{equation}\label{e:Bkf}
-\frac{b_p\log C'}{A_p}\leq\frac{\log P_p(z)}{A_p}
\leq\frac{\log(C'r^{-2n})}{A_p}+\frac{2}{A_p}\,\left(\max_{B(z,r)}
\psi_p-\psi_p(z)\right)
\end{equation}
holds for all $p>p_0$, $0<r<r_0$, and $z\in U$ with 
$\psi_p(z)>-\infty$.
 The upper bound in \eqref{e:Bkf} is proved exactly as the 
 corresponding upper 
 bound from the proof of \cite[Theorem 5.1]{CM11}. 
 For the lower bound, 
 we show that there exist $c\in(0,1)$ and $p_0\in\mathbb N$ 
 such that if $p>p_0$ and $z\in U$, $\psi_p(z)>-\infty$, 
 then there exists a section
  $S_{z,p}\in H^0_{(2)}(X,L_p)$ verifying $S_{z,p}(z)\neq0$ and 
\begin{equation}\label{e:lest}
c^{b_p}\|S_{z,p}\|_p^2\leq|S_{z,p}(z)|^2_{h_p}\,.
\end{equation}
This implies that 
$$\frac{1}{A_p}\,\log P_p(z)=\frac{1}{A_p}\,\max_{\|S\|_p=1}
\log|S(z)|^2_{h_p}\geq\frac{b_p\log c}{A_p}\,.$$

\par To prove \eqref{e:lest} we work on $\wi X\setminus\Sigma$ 
and recall that $\pi:\wi X\setminus\Sigma\longrightarrow 
X_{\rm reg}$ is a biholomorphism. Let $\Omega\geq\pi^*\omega$
be the K\"ahler form on $\wi X$ constructed in
Lemma \ref{L:Moishezon}, and $b_p,\,\wi h_p$ be as in 
Lemma \ref{L:tildehp}. Then $b_p\to\infty$ and 
$$c_1(\pi^* L_p\vert_{\wi X\setminus\Sigma},\wi h_p)
\geq b_p\Omega\,\text{ on }\,\wi X\setminus\Sigma.$$
Since $\Omega$ is a K\"ahler form on $\wi X$ we have
$\ric_\Omega\geq-2\pi B\Omega$ on $\wi X$, for some 
constant $B>0$. Moreover, since $\wi X$ is a compact K\"ahler 
manifold, $\wi X\setminus\Sigma$ has a complete K\"ahler metric 
(see \cite{D82}, \cite{O87}). Repeating the argument in the proof 
of \cite[Theorem 5.1]{CM11} 
(see also \cite[Theorems 4.2 and 4.3]{CM13}) one applies 
the Ohsawa-Takegoshi extension theorem \cite{OT87} and then
solves a suitable $\db$-equation using Theorem \ref{T:db}, 
as in \cite[Proposition 3.1]{D92}, \cite[Section 9]{D93b}, 
to show the following: there exist $C''>1$, $p_0\in\mathbb N$, 
such that if $p>p_0$ and $\wi z\in\pi^{-1}(U)$, 
$\psi_p\circ\pi(\wi z)>-\infty$, then there exists a section 
$\wi S\in H^0(\wi X\setminus\Sigma,\pi^* L_p)$ verifying 
$\wi S(\wi z)\neq0$ and 
$$\int_{\wi X\setminus\Sigma}|\wi S|^2_{\wi h_p}\,\frac{\Omega^n}{n!}\leq C''|\wi S(\wi z)|^2_{\wi h_p}\,.$$
It is important to recall here that the weight of $\wi h_p$ near
$\wi z$ is the sum of $\psi_p\circ\pi$ and a smooth function. 
By Lemma \ref{L:tildehp} we have for all $p$ that 
$\wi h_p\geq\alpha^{b_p}\pi^* h_p$ on $\wi X\setminus\Sigma$ 
and $\wi h_p\leq\beta^{b_p}\pi^* h_p$ on $\pi^{-1}(U)$, 
for some constant $\beta>1$. Since $\Omega\geq\pi^*\omega$
these imply that 
$$\alpha^{b_p}\int_{\wi X\setminus\Sigma}|\wi S|^2_{\pi^* h_p}\,
\frac{\pi^*\omega^n}{n!}\leq C''\beta^{b_p}|\wi S(\wi z)|^2_{\pi^* h_p}\,.$$
Fix a constant $c\in(0,1)$ with 
$C''c^{b_p}\leq(\alpha/\beta)^{b_p}$ for all $p$ and let $S_{z,p}$, 
where $z=\pi(\wi z)$, be the section of $L_p\vert_{X_{\rm reg}}$
induced by $\wi S$. Since $X$ is normal it follows that $S_{z,p}$ 
extends to a holomorphic section of $L_p$ on $X$. Moreover, 
$S_{z,p}(z)\neq0$ and \eqref{e:lest} holds for $S_{z,p}$ and $c$. 
The proof of \eqref{e:Bkf} is now complete. 

\par To conclude Step 1, it suffices to show that every 
subsequence of $\{\frac{1}{A_p}\,\log P_p\}$ has a subsequence 
convergent to $0$ in $L^1(U,\omega^n)$. 
Without loss of generality, we prove that 
$\{\frac{1}{A_p}\,\log P_p\}$ has a subsequence convergent to 
$0$ in $L^1(U,\omega^n)$. Since $\{\frac{1}{A_p}\,\psi_p\}$ 
is locally uniformly upper bounded in $\inte V$ and relatively 
compact in $L^1_{loc}(\inte V,\omega^n)$, there exists a 
subsequence $\{\psi_{p_j}\}$ so that 
$$\frac{1}{A_{p_j}}\,\psi_{p_j}\to\psi
=\left(\limsup\frac{1}{A_{p_j}}\,\psi_{p_j}\right)^*$$
in $L^1_{loc}(\inte V,\omega^n)$ and a.e. on $\inte V$, where
$\psi\in PSH(\inte V)$. Moreover, by the Hartogs lemma, 
$$\limsup\frac{1}{A_{p_j}}\,\max_{B(z,r)}\psi_{p_j}
\leq\max_{B(z,r)}\psi\,,$$
for each $z\in U$ and $r<r_0$
(see e.g., \cite[Theorem 3.2.13]{Ho2}). Letting $p_j\to\infty$
in \eqref{e:Bkf} we get, since $b_p/A_p\to0$, that 
$$0\leq\liminf\frac{\log P_{p_j}(z)}{A_{p_j}}
\leq\limsup\frac{\log P_{p_j}(z)}{A_{p_j}}
\leq2\left(\max_{B(z,r)}\psi-\psi(z)\right)$$
for a.e. $z\in U$ and every $r<r_0$. Letting $r\searrow0$ 
and using the upper semicontinuity of $\psi$ we deduce that 
$\frac{1}{A_{p_j}}\,\log P_{p_j}\to0$ a.e. on $U$.
Since $\{\frac{1}{A_p}\,\psi_p\}$ is locally uniformly upper 
bounded in $\inte V$ it follows by \eqref{e:Bkf} that there exists 
a constant $C''>0$ such that 
$$\left|\frac{1}{A_{p_j}}\,\log P_{p_j}\right|
\leq C''-\frac{2}{A_{p_j}}\,\psi_{p_j}\,\text{ a.e. on }\, U\,.$$
As $\psi_{p_j},\,\psi\in L^1(U,\omega^n)$, $\frac{1}{A_{p_j}}\,
\psi_{p_j}\to\psi$ a.e. on $U$ and in $L^1(U,\omega^n)$, and 
since $\frac{1}{A_{p_j}}\,\log P_{p_j}\to0$ a.e. on $U$, 
the generalized Lebesgue dominated convergence theorem
implies that $\frac{1}{A_{p_j}}\,\log P_{p_j}\to0$ in 
$L^1(U,\omega^n)$. 

\smallskip

\par {\em Step 2.} To complete the proof of $(i)$ we show here 
that there exists a compact set $K\subset X$ such that 
$X_{\rm sing}\subset\inte K$ and 
$$\frac{1}{A_p}\,\int_K|\log P_p|\,\omega^n\to0\,
\text{ as }\,p\to\infty\,.$$
Let $H^0_{(2)}(\wi X,\pi^* L_p)$ be the Bergman spaces from 
Lemma \ref{L:bPp}. We note that there exists $M>0$ such that 
\begin{equation}\label{e:domin}
\int_{\wi X}c_1(\pi^* L_p,\pi^* h_p)\wedge\Omega^{n-1}
\leq MA_p\;,\;\;\forall\,p\geq1\,.
\end{equation}
Indeed, since $c_1(L_p,h_p),\,T_0\in\cT(X)$ and 
$c_1(L_p,h_p)\leq A_pT_0$, we have 
$c_1(\pi^* L_p,\pi^* h_p)=\pi^* c_1(L_p,h_p)\leq A_p\pi^* T_0$. 
This yields \eqref{e:domin} with 
$M=\int_{\wi X}\pi^* T_0\wedge\Omega^{n-1}$. 

\par We fix now $y\in\Sigma$ and $\wi W$ an open neighborhood 
of $y$ in $\wi X$ biholomorphic to a ball in ${\mathbb C}^n$. 
Using \eqref{e:domin} and repeating an argument from Step 1, 
we find holomorphic frames $\wi e_p$ of $\pi^* L_p\vert_{\wi W}$
for which the corresponding psh weights $\wi\psi_p$ of 
$\pi^* h_p$ are such that $\{\frac{1}{A_p}\,\wi\psi_p\}$
is bounded in $L^1_{loc}(\wi W,\Omega^n)$, hence locally 
uniformly upper bounded on $\wi W$ and relatively compact in
$L^1_{loc}(\wi W,\Omega^n)$. If $\{\wi S^p_j:\,1\leq j\leq d_p\}$
is an orthonormal basis of $H^0_{(2)}(\wi X,\pi^* L_p)$ we write
$\wi S^p_j=\wi s^p_j\wi e_p$ and let 
$\wi v_p=\frac{1}{2}\,\log(\sum_{j=1}^{d_p}|\wi s^p_j|^2)
\in PSH(\wi W)$. By Lemma \ref{L:bPp}, $P_p\circ\pi$
is the Bergman kernel function of $H^0_{(2)}(\wi X,\pi^* L_p)$,
hence 
$$\frac{1}{A_p}\,\wi v_p-\frac{1}{A_p}\,
\wi\psi_p=\frac{1}{2A_p}\,\log P_p\circ\pi\,.$$
We claim that $\frac{1}{A_p}\,\log P_p\circ\pi\to0$ in 
$L^1_{loc}(\wi W,\Omega^n)$. 
As in Step 1, it suffices to produce a subsequence with 
this property. 
Since $\{\frac{1}{A_p}\,\wi\psi_p\}$ is relatively compact in 
$L^1_{loc}(\wi W,\Omega^n)$ there is a subsequence 
$\{\frac{1}{A_{p_j}}\,\wi\psi_{p_j}\}$ 
convergent in $L^1_{loc}(\wi W,\Omega^n)$ to a psh function 
$\wi\psi$ on $\wi W$. 
By Step 1, $\frac{1}{A_p}\,\log P_p\to0$ as $p\to\infty$ in 
$L^1_{loc}(X_{\rm reg},\omega^n)$, so as 
$\pi:\wi X\setminus\Sigma\longrightarrow X_{\rm reg}$ is
biholomorphic, 
$\frac{1}{A_p}\,\log P_p\circ\pi\to0$ in
$L^1_{loc}(\wi W\setminus\Sigma,\Omega^n)$. 
Thus $\frac{1}{A_{p_j}}\,\wi v_{p_j}\to\wi\psi$ in 
$L^1_{loc}(\wi W\setminus\Sigma,\Omega^n)$. By the argument in 
the proof of \cite[ Theorem 1.1 (i)]{CM13}, we see that 
$\{\frac{1}{A_{p_j}}\,\wi v_{p_j}\}$ is locally uniformly upper 
bounded in $\wi W$ 
and it converges to $\wi\psi$ in $L^1_{loc}(\wi W,\Omega^n)$. 
Hence $\frac{1}{A_{p_j}}\,\log P_{p_j}\circ\pi\to0$ in 
$L^1_{loc}(\wi W,\Omega^n)$, which proves our claim. 

\par Since $y\in\Sigma$ was arbitrary and $\Sigma$ is compact 
we can find an open set $\wi U\supset\Sigma$ so that
$\frac{1}{A_p}\,\log P_p\circ\pi\to0$ in $L^1(\wi U,\Omega^n)$. 
Then we fix a compact set $K\subset X$ such that 
$X_{\rm sing}\subset\inte K$ and $\pi^{-1}(K)\subset\wi U$. 
As $\pi:\wi X\setminus\Sigma\longrightarrow X_{\rm reg}$ is 
biholomorphic and $\pi^*\omega\leq\Omega$ we have 
\begin{eqnarray*}
\frac{1}{A_p}\,\int_K|\log P_p|\,\omega^n
&=&\frac{1}{A_p}\,\int_{K\cap X_{\rm reg}}|\log P_p|\,\omega^n
=\frac{1}{A_p}\,\int_{\pi^{-1}(K)\setminus\Sigma}
|\log P_p\circ\pi|\,\pi^*\omega^n\\
&\leq&\frac{1}{A_p}\,\int_{\pi^{-1}(K)}|\log P_p\circ\pi|\,
\Omega^n\to0\,\text{ as }\,p\to\infty\,.
\end{eqnarray*}
This concludes Step 2 and finishes the proof of 
Theorem \ref{T:mt1}.
\end{proof}


\par Theorem \ref{T:mt1} holds under the weaker hypotheses 
obtained if one replaces the domination assumption \eqref{e:domin0}
by \eqref{e:domin}, where $\pi:\widetilde{X}\to X$ is the resolution
of singularities fixed in Section \ref{S:prelim}. The proof goes 
through without change.
Due to the presence of singularities of $X$, it is not clear
whether \eqref{e:domin} holds true without 
the domination condition \eqref{e:domin0}, i.e.,\ if the mass 
of the pull-back currents $\pi^*c_1(L_p,h_p)$ on $\widetilde{X}$
is dominated by the mass of the currents $c_1(L_p,h_p)$ on $X$,
uniformly in $p$\,. 


\par\begin{proof}[Proof of Theorem \ref{T:mt2}]  We repeat the 
argument in Step 1 from the previous proof, working directly on
$(X,L_p,h_p)$ with the K\"ahler form $\omega$. 
\end{proof}



\begin{proof}[Proof of Theorem \ref{T:mt3}] We use methods 
    from \cite[Section 2]{Be03}. Let us consider a reference cover 
    of $X$ as in Definition \ref{D:refcov}. We fix $x \in X$, so 
    $x \in \Delta^n(x_j,R_j)$ for some $j$, and we pick coordinates
    $z$ centered at $x$ as in Lemma \ref{L:rc}. Let 
$$\varphi_p(z) = \varphi^\prime_p(z)+ \widetilde{\varphi}_p (z)\,,
\;\;\varphi^\prime_p(z) = \sum_{l=1}^n \lambda^p_l |z_l|^2\,,$$
be a weight of $h_p$ on $\Delta^n(x,R)$ so that 
$\widetilde{\varphi}_p$ verifies $(ii)$ in Lemma \ref{L:rc} and let
$e_p$ be a frame of $L_p$ on $U_j$ with
$|e_p|_{h_p} = e^{-\varphi_p}$. Finally, let $r_p\in(0,R/2)$
be an arbitrary number which will be specified later. 

\par We begin by estimating the norm of a section 
$S \in H^0(X,L_p)$ at $x$. Writing $S = se_p$, where 
$s \in \mathcal{O}(\Delta^n(x,R))$, we obtain by the 
sub-averaging inequality for psh functions
\[
|S(x)|^2_{h_p} = |s(0)|^2 \leq \frac{\int_{\Delta^n(0,r_p)} 
|s|^2 e^{-2 \varphi^\prime_p}\,dm}{\int_{\Delta^n(0,r_p)} 
e^{-2 \varphi^\prime_p} \,dm}\,\cdot
\]
If $C>1$ is the constant from Lemma \ref{L:rc}, we have
\begin{eqnarray*}
\int_{\Delta^n(0,r_p)}|s|^2e^{-2\varphi_p'}\,dm&
\leq&(1+Cr^2_p)\exp\!\big(2\max_{\Delta^n(0,r_p)}
\widetilde{\varphi}_p\big)\int_{\Delta^n(0,r_p)}|s|^2
e^{-2\varphi_p}\frac{\omega^n}{n!}\\ 
&\leq&(1+Cr^2_p)\exp\!\big(2C\|h_p\|_3\,r^3_p\big)\|S\|^2_p\,.
\end{eqnarray*}
Set 
$$E(r):=\int_{|\xi|\leq r}e^{-2|\xi|^2}\,dm(\xi)
=\frac{\pi}{2}\,\left(1-e^{-2r^2}\right),$$ 
where $\,dm$ is the Lebesgue measure on $\C$. Since 
$\lambda_j^p\geq a_p$ we obtain 
$$\frac{E(r_p\sqrt{a_p}\,)^n}{\lambda_1^p\ldots\lambda_n^p} 
\leq \int_{\Delta^n(0,r_p)}e^{-2\varphi'_p}\,dm
\leq\int_{\C^{n}}e^{-2\varphi_p'}\,dm 
= \frac{(\pi/2)^n}{\lambda_1^p\ldots\lambda_n^p}\,\cdot$$
Combining these estimates it follows that 
\begin{equation}\label{e:bk1}
|S(x)|_{h_p}^2\leq\frac{(1+Cr_p^2)\exp\!\big(2C\|h_p\|_3\,r_p^3
\big)}{E(r_p\sqrt{a_p})^n}\,\lambda_1^p
\ldots\lambda_n^p\,\|S\|_p^2\,.
\end{equation}
By taking the supremum in \eqref{e:bk1} over all 
$S\in H^0(X,L_p)$ with $\|S\|_p=1$ we get 
\begin{equation}\label{e:bk2}
\frac{P_p(x)}{\lambda_1^p\ldots\lambda_n^p}\leq 
\frac{(1+Cr_p^2)\exp\!\big(2C\|h_p\|_3\,r_p^3\big)}{E(r_p\sqrt{a_p})^n}\,,\quad \forall\, r_p\in(0,R/2). 
\end{equation}

\par For the lower estimate on $P_p$, let $0\leq\chi\leq1$ 
be a cut-off function on $\C^{n}$ with support in $\Delta^n(0,2)$, 
$\chi\equiv 1$ on $\Delta^n(0,1)$, and set
$\chi_p(z)=\chi(z/r_p)$. Then $F=\chi_p e_p$ is a section of 
$L_p$ and $|F(x)|_{h_p}=|e_p(x)|_{h_p}=e^{-\varphi_p(0)}=1$. 
We have  
\begin{equation}\label{e:bk3}\begin{split}
\|F\|^2_p&\leq\int_{\Delta^n(0,2r_p)}e^{-2\varphi_p}\,\frac{\omega^n}{n!}\\
&\leq (1+4Cr^2_p)\exp\!\big(16C\|h_p\|_3\,r_p^3\big)
\int_{\Delta^n(0,2r_p)}e^{-2\varphi_p'}\,dm\\
&\leq\left(\frac{\pi}{2}\right)^n\,\frac{(1+4Cr_p^2)
\exp\!\big(16C\|h_p\|_3\,r_p^3\big)}{\lambda_1^p
\ldots\lambda_n^p}\,\cdot
\end{split}
\end{equation}
Set $\alpha=\overline\partial F$. Since 
$\|\overline\partial\chi_p\|^2=\|\overline\partial\chi\|^2/r_p^2$, 
where $\|\overline\partial\chi\|$ denotes the maximum of 
$|\overline\partial\chi|$, we obtain as above 
$$\|\alpha\|_p^2
=\int_{\Delta^n(0,2r_p)}|\overline\partial\chi_p|^2
e^{-2\varphi_p}\,\frac{\omega^n}{n!}\leq\frac{\|\overline\partial\chi\|^2}
{r_p^2}\,\left(\frac{\pi}{2}\right)^n\,\frac{(1+4Cr_p^2)\exp\!\big(16C\|h_p\|_3\,r_p^3\big)}{\lambda_1^p\ldots\lambda_n^p}\,\cdot$$
Since $a_p\to\infty$ there exists $p_0\in\N$ such that for 
$p>p_0$ we can solve the $\overline\partial$--equation by 
Theorem \ref{T:db}. We get a smooth section $G$ of $L_p$ 
with $\overline\partial G=\alpha=\overline\partial F$ and 
\begin{equation}\label{e:bk4}
\|G\|_p^2\leq \frac{2}{a_p}\,\|\alpha\|_p^2
\leq \frac{2\|\overline\partial\chi\|^2}{a_pr_p^2}\,
\left(\frac{\pi}{2}\right)^n\,\frac{(1+4Cr_p^2)\exp\!\big(16C\|h_p\|_3\,r_p^3\big)}{\lambda_1^p\ldots\lambda_n^p}\,\cdot 
\end{equation}
Since $F=e_p$ is holomorphic on $\Delta^n(0,r_p)$, $G$ 
is holomorphic on $\Delta^n(0,r_p)$ as 
$\overline\partial G=\overline\partial F=0$ there. 
So the estimate \eqref{e:bk1} applies to $G$ on 
$\Delta^n(0,r_p)$ and gives  
\begin{eqnarray*}
|G(x)|_{h_p}^2&\leq&\frac{(1+Cr_p^2)
\exp\!\big(2C\|h_p\|_3\,r_p^3\big)}
{E(r_p\sqrt{a_p})^n}\lambda_1^p\ldots\lambda_n^p\|G\|_p^2\\
&\leq&\frac{2\|\overline\partial\chi\|^2}{a_pr_p^2
E(r_p\sqrt{a_p})^n}\,\left(\frac{\pi}{2}\right)^n\,(1+4Cr_p^2)^2 
\exp\!\big(18C\|h_p\|_3\,r_p^3\big).
\end{eqnarray*}
Let $S=F-G\in H^0(X,L_p)$. Then
\begin{eqnarray*}
|S(x)|_{h_p}^2&\geq&(|F(x)|_{h_p}-|G(x)|_{h_p})^2
= (1-|G(x)|_{h_p})^2\\
&\geq&\left[1-\,\left(\frac{\pi}{2}\right)^{n/2}
\frac{\sqrt2\,\|\overline\partial\chi\|(1+4Cr_p^2)}{r_p\sqrt{a_p}
E(r_p\sqrt{a_p})^{n/2}}\exp\!\big(9C\|h_p\|_3\,r_p^3\big)
\right]^2=:K_1(r_p)\,.
\end{eqnarray*}
Moreover, by \eqref{e:bk3} and \eqref{e:bk4}
$$\|S\|_p^2\leq (\|F\|_p+\|G\|_p)^2
\leq \left(\frac{\pi}{2}\right)^n\,\frac{K_2(r_p)}{\lambda_1^p
\ldots\lambda_n^p} \,,$$
where 
$$K_2(r_p)=(1+4Cr_p^2) \exp\!\big(16C\|h_p\|_3\,r_p^3\big)
\left(1+\frac{\sqrt2\,\|\overline\partial\chi\|}{r_p\sqrt{a_p}}\right)^2
\cdot$$
Therefore 
\begin{equation}
\label{e:bk5}
P_p(x)\geq \frac{|S(x)|_{h_p}^2}{\|S\|_p^2}
\geq \frac{\lambda_1^p\ldots\lambda_n^p}{(\frac{\pi}{2})^n}\frac{K_1(r_p)}{K_2(r_p)}\,\cdot
\end{equation}
Note that at $x$, $\omega_x=\frac{i}{2}\sum_{j=1}^n dz_j
\wedge d\bar{z}_j\,$, 
$c_1(L_p,h_p)_x=dd^c \varphi_p(0)
=\frac{i}{\pi}\sum_{j=1}^n\lambda_j^p dz_j\wedge d\bar{z}_j\,$,
thus 
\[\frac{c_1(L_p,h_p)_x^n}{\omega^n_x}
=\left(\frac{2}{\pi}\right)^n\lambda_1^p\ldots\lambda_n^p\,.\] 
By \eqref{e:bk2} and \eqref{e:bk5} we conclude that 
\begin{equation}
\label{e:bk6}
\frac{K_1(r_p)}{K_2(r_p)}\leq P_p(x)\frac{\omega^n_x}{c_1(L_p, h_p)_x^n}\leq K_3(r_p)
\end{equation}
holds for every $x\in X$, $r_p<R/2$ and $p>p_0$, where 
$$K_3(r_p)=\left(\frac{\pi/2}{E(r_p\sqrt{a_p})}\right)^n
(1+Cr_p^2)\exp\!\big(2C\|h_p\|_3\,r_p^3\big)\,.$$
By \eqref{e:3d} we have that 
$\varepsilon_p=\|h_p\|_3^{1/3}a_p^{-1/2}\to0$. We set
$$r_p:=\varepsilon_p^{1/3}\|h_p\|_3^{-1/3}
=\varepsilon_p^{-2/3}a_p^{-1/2}\;,\,\text{ so }\,\|h_p\|_3\,r_p^3
=\varepsilon_p\,,\;r_p\sqrt{a_p}=\varepsilon_p^{-2/3}\,.$$
As $\|h_p\|_3\geq1$, we have $r_p\leq \varepsilon_p^{1/3}$, 
thus $r_p\to0$ as $p\to\infty$. With this choice for $r_p$ we 
obtain 
$$K_3(r_p)\leq\left(\frac{\pi/2}{
E(\varepsilon_p^{-2/3})}\right)^n(1+C\varepsilon_p^{2/3})
\exp\!\big(2C\varepsilon_p\big)\leq1+C'\varepsilon_p^{2/3}\;,\;\;
\frac{K_1(r_p)}{K_2(r_p)}\geq1-C'\varepsilon_p^{2/3}\,,$$
where $C'>0$ is a constant depending only on the reference cover.
Therefore \eqref{e:Bexp00} follows from \eqref{e:bk6} and the 
proof is complete. 
\end{proof}


\begin{Remark}\label{R:Bka} Theorem \ref{T:mt3} shows that
$$\lim_{p\to\infty}P_p(x)\,\frac{\omega^n_x}{c_1(L_p, h_p)_x^n}=1\,,\:\:\text{uniformly on $X$}.$$
This is a generalization of the asymptotic expansion of the Bergman
kernel \cite{Ca99,DLM04a,HsM11,MM07,MM08,Ru98,Ti90,Z98}
for $(L_p,h_p)=(L^p,h^p)$, where $(L,h)$ is a positive line bundle 
with smooth metric $h$. Indeed, if $(L_p,h_p)=(L^p,h^p)$, we have
$a_p=p$ and $\|h_p\|_3\leq C_hp$, where $C_h$ is a constant 
depending only on $h$ and the reference cover. Hence
\[\varepsilon_p=\|h_p\|_3^{1/3}a_p^{-1/2}
\leq C_h^{1/3}p^{-1/6}\,,\]
so condition \eqref{e:3d} is fulfilled. Estimate \eqref{e:Bexp00} 
yields
\begin{equation}\label{bk:16}
\left|\frac{P_p(x)}{p^n}\, \frac{\omega^n_x}{c_1(L, h)_x^n}-1\right|\leq \frac{CC_h^{2/9}}{p^{1/9}}\,,
\end{equation}
hence
\begin{equation}\label{bk:17}
|P_p(x)-b_0(x)p^n|\leq C'p^{n-\frac{1}{9}}\,,\,
\text{ where }\,b_0(x):=\frac{c_1(L, h)_x^n}{\omega^n_x}\,\cdot
\end{equation}
By the above mentioned papers there exists $C>0$ such that  
$\big|P_p(x)-b_0(x)p^n\big|\leq C p^{n-1}$ on $X$, which gives 
a sharper estimate of $P_p(x)$ than \eqref{bk:16}-\eqref{bk:17}.
On the other hand, the method used here can handle the much more
general case of sequences of line bundles $(L_p,h_p)$ satisfying
the minimal hypotheses of Theorem \ref{T:mt3}. 
\end{Remark}


\section{Zeros of holomorphic sections and 
approximation results}\label{S:zeros}

We assume as before that $(X,\omega)$, $(L_p,h_p)\to X$ 
satisfy conditions (A) and (B).
Consider the unit sphere 
${\mathcal S}^p\subset H^0_{(2)}(X,L_p)$, 
$d_p=\dim H^0_{(2)}(X,L_p)$. We identify the unit sphere 
${\mathcal S}^p$ to the unit sphere ${\mathbf S}^{2d_p-1}$ 
in ${\mathbb C}^{d_p}$ by
$$a=(a_1,\dots,a_{d_p})\in{\mathbf S}^{2d_p-1}\longmapsto S_a
=\sum_{j=1}^{d_p}a_jS_j^p\in{\mathcal S}^p,$$
and we let $\lambda_p$ be the probability measure on 
$\mathcal S^p$ induced by the normalized surface measure on 
${\mathbf S}^{2d_p-1}$, denoted also by $\lambda_p$ (i.e.,\ 
$\lambda_p({\mathbf S}^{2d_p-1})=1$).
Consider the probability space 
${\mathcal S}_\infty=\prod_{p=1}^\infty{\mathcal S}^p$ 
endowed with the probability measure 
$\lambda_\infty=\prod_{p=1}^\infty\lambda_p$\,. 
We denote by $[\sigma=0]$ the current of integration
(with multiplicities) over the zero set of a nontrivial section 
$\sigma\in H^0_{(2)}(X,L_p)$.
\begin{Theorem}\label{T:a1}
Let $(X,\omega)$, $(L_p,h_p)$, $p\geq1$, verify 
assumptions (A)-(B) and assume that 
$\sum_{p=1}^\infty \frac{1}{A_p^{2}}<\infty$. Then for
$\lambda_\infty$-a.e. sequence 
$\{\sigma_p\}_{p\geq1}\in{\mathcal S}_\infty$ 
we have in the weak sense of currents on $X$ that 
\begin{equation*}
\lim_{p\to\infty}\,\frac{1}{A_p}
\big([\sigma_p=0]-c_1(L_p,h_p)\big)=0\,.
\end{equation*}
Moreover, if $\lim_{p\to\infty}\,\frac{1}{A_p}\,c_1(L_p,h_p)=T$ 
for some positive closed current $T$ of bidegree $(1,1)$ on $X$, 
then for $\lambda_\infty$-a.e. sequence
$\{\sigma_p\}_{p\geq1}\in{\mathcal S}_\infty\,$,
\begin{equation*}
\lim_{p\to\infty}\,\frac{1}{A_p}[\sigma_p=0]=T\;
\text{ weakly on $X$}.
\end{equation*}
\end{Theorem}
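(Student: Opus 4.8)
The plan is to deduce Theorem~\ref{T:a1} from Theorem~\ref{T:mt1}(ii) by a standard probabilistic argument combining the Poincar\'e--Lelong formula with a variance estimate and the Borel--Cantelli lemma. First I would fix a test form $\phi\in\mathcal D^{n-1,n-1}(X)$ and study the random variables
\begin{equation*}
\big\langle \tfrac{1}{A_p}\big([\sigma_p=0]-\gamma_p\big),\phi\big\rangle,\qquad \sigma_p\in\mathcal S^p.
\end{equation*}
By the Poincar\'e--Lelong formula, for a section $\sigma=S_a$ with $a\in\mathbf S^{2d_p-1}$ one has $[\sigma=0]=c_1(L_p,h_p)+dd^c\log|S_a|_{h_p}$ on $X_{\mathrm{reg}}$, so writing $u_p(a)=\log|S_a|_{h_p}$ and recalling that $\gamma_p=c_1(L_p,h_p)+\tfrac12 dd^c\log P_p$ from \eqref{e:BFS2}, one gets
\begin{equation*}
[\sigma=0]-\gamma_p=dd^c\Big(u_p(a)-\tfrac12\log P_p\Big).
\end{equation*}
Since $u_p(a)-\tfrac12\log P_p=\log\big(|S_a|_{h_p}/P_p^{1/2}\big)=\log|\langle a,u\rangle|$ in terms of the unit vector $u=(s_1^p,\dots,s_{d_p}^p)/P_p^{1/2}$, the key is to control the mass of $u_p(a)-\tfrac12\log P_p$ in $L^1$, uniformly in a probabilistic sense.

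Next I would carry out the variance/concentration estimate. The plan is to show that there is a constant $C_\phi$ so that
\begin{equation*}
\int_{\mathcal S^p}\Big|\big\langle \tfrac{1}{A_p}\big([\sigma=0]-\gamma_p\big),\phi\big\rangle\Big|\,d\lambda_p(\sigma)\leq \frac{C_\phi}{A_p},
\end{equation*}
and moreover a second-moment bound giving
\begin{equation*}
\lambda_p\Big(\Big\{\sigma:\big|\big\langle \tfrac{1}{A_p}([\sigma=0]-\gamma_p),\phi\big\rangle\big|>\delta\Big\}\Big)\leq \frac{C_\phi'}{\delta^2 A_p^2}.
\end{equation*}
This rests on integrating $\log|\langle a,u\rangle|$ against the uniform measure on the sphere, which is an explicit universal computation (it equals a fixed constant independent of $u$ since $u$ is a unit vector, and has uniformly bounded $L^2$ fluctuation); after applying $dd^c$ and pairing with $\phi$ one integrates by parts onto $dd^c\phi$. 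The hypothesis $\sum_p A_p^{-2}<\infty$ then makes the right-hand side summable, so Borel--Cantelli yields that for $\lambda_\infty$-a.e. sequence the pairing with $\phi$ tends to $0$. Running this over a countable dense family of test forms $\phi$ and using Theorem~\ref{T:mt1}(ii), which gives $\tfrac{1}{A_p}(\gamma_p-c_1(L_p,h_p))\to0$ weakly, produces the first assertion. The second assertion follows immediately from the first together with the hypothesis $\tfrac{1}{A_p}c_1(L_p,h_p)\to T$.

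The main obstacle I anticipate is handling the singularities of $X$ in the integration-by-parts and in justifying the Poincar\'e--Lelong formula globally. On $X_{\mathrm{sing}}$ the currents $[\sigma=0]$, $\gamma_p$, and $c_1(L_p,h_p)$ must be understood as currents on the analytic space in the sense of \cite{D85}, and the pairing $\langle dd^c(\cdots),\phi\rangle=\langle \cdots, dd^c\phi\rangle$ requires that the relevant potentials be locally integrable up to $X_{\mathrm{sing}}$. The natural way around this is to pull everything back by the resolution $\pi:\widetilde X\to X$ and work on the smooth compact K\"ahler manifold $\widetilde X$, exactly as in the proof of Theorem~\ref{T:mt1}: by Lemma~\ref{L:bPp} the Bergman data transport isometrically, the sections $S_a$ pull back to $\pi^*L_p$, and integrability of the local psh potentials across $\Sigma$ is controlled just as in Step~2 of that proof. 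The remaining subtlety is purely probabilistic---ensuring the variance estimate is uniform in $p$ despite $d_p$ growing---but this reduces to the standard fact that $\int_{\mathbf S^{2d-1}}\big|\log|\langle a,u\rangle|\big|^2\,d\lambda$ is bounded independently of the dimension $d$ and the unit vector $u$, which is where the bulk of the technical care is needed.
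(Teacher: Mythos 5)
Your plan follows the same route as the paper's proof: reduce the theorem to showing $\frac{1}{A_p}\big([\sigma_p=0]-\gamma_p\big)\to0$ almost surely, via the Poincar\'e--Lelong identity and \eqref{e:BFS2}, then a mean-zero plus second-moment estimate for the pairing with a fixed test form, then summability of $\sum_p A_p^{-2}$ (your Chebyshev--Borel--Cantelli step is equivalent to the paper's observation that $\sum_p|Y_p|^2$ has finite expectation, hence is a.s.\ finite), then a countable family of test forms, and finally Theorem \ref{T:mt1}(ii) to replace $\gamma_p$ by $c_1(L_p,h_p)$. The paper outsources the probabilistic computation to \cite[Sec.~3.1-3.3]{ShZ99} and handles the singularities by the same device you propose (Lemma \ref{L:bPp} and the resolution $\pi:\wi X\to X$). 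One small omission: to pass from convergence against a countable family to genuine weak convergence you need the uniform mass identity $\int_X[\sigma_p=0]\wedge\omega^{n-1}=\int_X\gamma_p\wedge\omega^{n-1}=A_p$, which the paper states explicitly and you should too.

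There is, however, one step of your plan that would fail if executed literally: the ``standard fact'' you reduce the variance estimate to is false. For $a$ uniform on ${\mathbf S}^{2d-1}$ and $u$ a unit vector, unitary invariance lets one take $u=e_1$, and then $|\langle a,u\rangle|^2=|a_1|^2$ is Beta$(1,d-1)$-distributed; hence $-\int\log|\langle a,u\rangle|\,d\lambda={\mathbf I}(d)=\tfrac12\big(1+\tfrac12+\dots+\tfrac1{d-1}\big)\sim\tfrac12\log d$, which is exactly the logarithmic growth quantified in the paper's Lemma \ref{L:Ik} (used for Theorem \ref{T:a2}). Consequently the \emph{uncentered} second moment $\int\big|\log|\langle a,u\rangle|\big|^2d\lambda\geq{\mathbf I}(d)^2$ blows up like $\tfrac14(\log d)^2$, and since $d_p\to\infty$ this is not uniform in $p$. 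What is bounded uniformly in $d$ and $u$ is the \emph{variance} $\int\big(\log|\langle a,u\rangle|+{\mathbf I}(d)\big)^2d\lambda$, and the divergent mean does no harm for a different reason: it is independent of the point $x\in X$, so after pairing with $dd^c\phi$ it contributes $-{\mathbf I}(d_p)\int_Xdd^c\phi=0$ by Stokes. With that correction, $E[Y_p]=0$ and the bound $E[|Y_p|^2]\leq C_\phi A_p^{-2}$ follow (e.g.\ by Cauchy--Schwarz applied to the covariance kernel of $x\mapsto\log|\langle a,u(x)\rangle|$), which is precisely the content of the Shiffman--Zelditch computation the paper cites. Your earlier phrasing (``a fixed constant independent of $u$'', ``uniformly bounded $L^2$ fluctuation'') contains the correct statement; it is the final reduction, which you single out as where the technical care is needed, that must be replaced by its centered version.
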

\begin{proof} The arguments in \cite{ShZ99,ShZ08}
    (see also \cite[Section 5.2]{CM13}; in all these papers 
    $A_p=p$) imply that for $\lambda_\infty$-a.e. sequence
    $\{\sigma_p\}_{p\geq1}\in{\mathcal S}_\infty\,$,
$$\lim_{p\to\infty}\,\frac{1}{A_p}\,\big([\sigma_p=0]
-\gamma_p\big)=0$$
weakly in the sense of currents on $X$. Indeed, by working 
with a countable set of test forms and since 
$$\int_X[\sigma_p=0]\wedge\omega^{n-1}
=\int_X\gamma_p\wedge\omega^{n-1}
=\int_X c_1(L_p,h_p)\wedge\omega^{n-1}=A_p\,,$$
it suffices to show that, for a fixed test form $\theta$, one has
\begin{equation}\label{e:SZ1}
\lim_{p\to\infty}\,\frac{1}{A_p}\,\big\langle[\sigma_p=0]
-\gamma_p,\theta\,\big\rangle=0\,,
\end{equation}
for $\lambda_\infty$-a.e. $\sigma=\{\sigma_p\}_{p\geq1}
\in{\mathcal S}_\infty$. Let
$$Y_p:{\mathcal S}_\infty\longrightarrow\mathbb{C}\,,
\quad Y_p(\sigma)=\frac{1}{A_p}\,\big\langle[\sigma_p=0]
-\gamma_p,\theta\,\big\rangle\,.$$
The calculations in \cite[Sec. 3.1-3.3]{ShZ99} show that 
$$\int_{{\mathcal S}_\infty}Y_p\,d\lambda_\infty=0\,,\;\;
\int_{{\mathcal S}_\infty}|Y_p|^2\,d\lambda_\infty
\leq\frac{AC_\theta}{A_p^2}\,,\,\text{ with } A
=\frac{1}{\pi^2}\int_{{\mathbb C}^2}(\log|z_1|)^2
e^{-|z_1|^2-|z_2|^2}\,dz\,,$$
where $dz$ is the Lebesgue measure on ${\mathbb C}^2$, 
and $C_\theta$ is a constant depending only on $\theta$. 
Then \eqref{e:SZ1} follows since 
$$\int_{{\mathcal S}_\infty}\left(\sum_{p=1}^\infty|Y_p|^2\right)\,
d\lambda_\infty\leq A\,C_\theta\;\sum_{p=1}^\infty
\frac{1}{A_p^2}<+\infty\,.$$
We now conclude by Theorem \ref{T:mt1}.
\end{proof}

\par We show next that equidistribution results hold not only 
for the zeros of random sequences of holomorphic sections but
also for the logarithms of their pointwise norms. 

\begin{Theorem}\label{T:a2}
Let $(X,\omega)$, $(L_p,h_p)$, $p\geq1$, verify 
assumptions (A)-(B) and assume that
\begin{equation}\label{e:logdp}
\liminf_{p\to\infty}\,\frac{\log d_p}{A_p}\,=0\,.
\end{equation}
Then there exists an increasing sequence of natural numbers 
$\{p_j\}_{j\geq1}$ such that for $\lambda_\infty$-a.e. sequence 
$\{\sigma_p\}_{p\geq1}\in{\mathcal S}_\infty$ we have 
\begin{equation*}
\lim_{j\to\infty}\frac{\log|\sigma_{p_j}|_{h_{p_j}}}{A_{p_j}}=0\;\,
\text{ in $L^1(X,\omega^n)$}.
\end{equation*}
\end{Theorem}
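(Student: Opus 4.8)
The plan is to split $\log|\sigma_p|_{h_p}$ into a deterministic Bergman-kernel term, controlled by Theorem \ref{T:mt1}(i), and a random fluctuation term whose size is governed by $\log d_p$. Fix $p$ and write $\sigma_p=S_a=\sum_{j=1}^{d_p}a_jS_j^p$ with $a=(a_1,\dots,a_{d_p})\in\mathbf S^{2d_p-1}$. On a chart where $L_p$ has a frame $e_p$ with weight $\varphi_p$, put $v_p(x)=(s_1^p(x),\dots,s_{d_p}^p(x))$, so that $\|v_p(x)\|^2=P_p(x)e^{2\varphi_p(x)}$; for a.e.\ $x$ (where $P_p(x)>0$) the unit vector $w_p(x)=\overline{v_p(x)}/\|v_p(x)\|\in\C^{d_p}$ gives the frame-independent identity
\begin{equation*}
\frac{|\sigma_p(x)|^2_{h_p}}{P_p(x)}=|\langle a,w_p(x)\rangle|^2\in[0,1],
\end{equation*}
whence $\log|\sigma_p|_{h_p}=\tfrac12\log P_p+\log|\langle a,w_p\rangle|$ a.e.\ on $X$. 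After dividing by $A_p$, the first summand is independent of $a$ and tends to $0$ in $L^1(X,\omega^n)$ by Theorem \ref{T:mt1}(i), so everything reduces to the fluctuation term.

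For the fluctuation term I would compute its expected $L^1$-mass. By unitary invariance of $\lambda_p$ on $\mathbf S^{2d_p-1}$, for every fixed unit vector $w$ the variable $|\langle a,w\rangle|^2$ has the same law as $|a_1|^2$, namely the law with density $(d_p-1)(1-t)^{d_p-2}$ on $[0,1]$. A short computation (substituting $u=1-t$ and expanding $-\log(1-u)=\sum_{k\ge1}u^k/k$) gives, independently of $w$,
\begin{equation*}
\int_{\mathcal S^p}\big|\log|\langle a,w\rangle|^2\big|\,d\lambda_p(a)=\sum_{k=1}^{d_p-1}\frac1k\le 1+\log d_p.
\end{equation*}
Hence by Tonelli, with $V=\int_X\omega^n<\infty$,
\begin{equation*}
\int_{\mathcal S^p}\Big(\int_X\big|\log(|\sigma_p|^2_{h_p}/P_p)\big|\,\omega^n\Big)\,d\lambda_p\le (1+\log d_p)\,V.
\end{equation*}

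It remains to upgrade this mean estimate to almost sure convergence along a subsequence. Set $Z_p=\frac1{2A_p}\int_X|\log(|\sigma_p|^2_{h_p}/P_p)|\,\omega^n\ge0$, a nonnegative variable depending only on the $p$-th coordinate. Since $A_p\ge a_p V\to\infty$ and $\liminf_p\frac{\log d_p}{A_p}=0$, I can extract an increasing sequence $\{p_j\}$ with $\frac{1+\log d_{p_j}}{A_{p_j}}\le 2^{-j}$; then $\int_{\mathcal S_\infty}Z_{p_j}\,d\lambda_\infty\le\frac V2\,2^{-j}$, so $\sum_j\int_{\mathcal S_\infty} Z_{p_j}\,d\lambda_\infty<\infty$. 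Markov's inequality and the Borel--Cantelli lemma then give $Z_{p_j}\to0$ for $\lambda_\infty$-a.e.\ sequence. Finally the triangle inequality
\begin{equation*}
\int_X\Big|\frac{\log|\sigma_{p_j}|_{h_{p_j}}}{A_{p_j}}\Big|\,\omega^n\le\frac1{2A_{p_j}}\int_X|\log P_{p_j}|\,\omega^n+Z_{p_j},
\end{equation*}
together with Theorem \ref{T:mt1}(i) for the first term, yields the claimed $L^1(X,\omega^n)$ convergence to $0$.

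The main obstacle is the quantitative control of the random term: the expected logarithmic mass of a random projection produces the factor $\sum_{k=1}^{d_p-1}1/k\sim\log d_p$, which is exactly what pins the hypothesis to the ratio $\frac{\log d_p}{A_p}$. A secondary point is that $\liminf=0$ only supplies a subsequence, so one cannot hope for convergence of the full sequence; passing to a sub-subsequence along which the means are summable is what makes Borel--Cantelli applicable and forces the $\{p_j\}$ in the statement.
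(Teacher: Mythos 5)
Your proposal is correct and follows essentially the same route as the paper's proof: the same splitting $\log|\sigma_p|_{h_p}=\tfrac12\log P_p+\tfrac12\log\big(|\sigma_p|^2_{h_p}/P_p\big)$ handled by Theorem \ref{T:mt1}(i), the same reduction by unitary invariance to the expected logarithm of a random projection on the sphere, and the same extraction of a subsequence making the expectations summable. The only differences are cosmetic: you compute the expectation exactly via the Beta law of $|a_1|^2$, arriving at $\sum_{k=1}^{d_p-1}1/k$, where the paper reaches the same integral $-\int_0^1(1-t)^{k-2}\log t\,dt$ through the spherical-coordinate Lemma \ref{L:Ik}, and you conclude by Markov plus Borel--Cantelli where the paper uses monotone convergence of the series of nonpositive variables $Y_{p_j}$.
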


\par To prove the theorem we need the following elementary lemma whose 
proof is included for the convenience of the reader.

\begin{Lemma}\label{L:Ik} If ${\mathbf S}^{2k-1}$ is the unit 
sphere in ${\mathbb C}^k$ with surface measure $d{\mathcal A}$ and 
$${\mathbf I}(k):=-\frac{1}{{\rm area}({\mathbf S}^{2k-1})}\,
\int_{{\mathbf S}^{2k-1}}\log|z_k|\,d{\mathcal A}\,,$$
where $z=(z_1,\ldots,z_k)\in{\mathbb C}^k$, then there exist 
numbers $a,b>1$ such that
$${\mathbf I}(k)\leq a\log k+b\;,\;\;\forall\,k\geq1\,.$$
\end{Lemma}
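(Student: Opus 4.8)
The plan is to reduce $\mathbf{I}(k)$ to an explicit one-dimensional integral by computing the law of the single coordinate $z_k$ under the uniform measure on $\mathbf{S}^{2k-1}\subset\C^k\cong\R^{2k}$, to evaluate that integral in closed form, and then to recognize the answer as a harmonic sum. First I would dispose of the trivial case $k=1$: on $\mathbf{S}^1$ we have $|z_1|=1$, so $\log|z_1|\equiv0$ and $\mathbf{I}(1)=0$; thus I may assume $k\geq2$. The quantity $\mathbf{I}(k)$ is exactly the expected value of $-\log|z_k|$ when $z$ is chosen uniformly on the sphere, so everything hinges on identifying the distribution of $z_k$.

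The central step is to establish that the push-forward of the normalized surface measure under the projection $z\mapsto z_k\in\C$ has density $\frac{k-1}{\pi}(1-|w|^2)^{k-2}$ with respect to Lebesgue measure $dm$ on the unit disk $\{|w|\leq1\}$; equivalently, $|z_k|^2$ is a $\mathrm{Beta}(1,k-1)$ variable with density $(k-1)(1-t)^{k-2}$ on $[0,1]$. The most economical route is the Gaussian model: if $g=(g_1,\dots,g_k)$ has independent standard complex Gaussian entries, then $g/\|g\|$ is uniform on $\mathbf{S}^{2k-1}$ and independent of $\|g\|$, so $|z_k|^2$ has the law of $|g_k|^2/\sum_{j=1}^k|g_j|^2$; since the $|g_j|^2$ are i.i.d.\ exponential, this ratio is $\mathrm{Beta}(1,k-1)$ by the standard Gamma--Beta relation. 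Alternatively, one can obtain the same density by disintegrating the surface measure directly, the slice $\{z_k=w\}$ being a sphere of radius $\sqrt{1-|w|^2}$ in $\C^{k-1}$, with the factor $(1-|w|^2)^{k-2}$ coming from the Jacobian.

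With the marginal density in hand I would write, passing to polar coordinates $w=re^{i\theta}$ and substituting $t=r^2$,
\[
\mathbf{I}(k)=-\int_{|w|\leq1}\log|w|\,\frac{k-1}{\pi}(1-|w|^2)^{k-2}\,dm(w)
=-\frac{k-1}{2}\int_0^1\log t\,(1-t)^{k-2}\,dt.
\]
To evaluate the remaining integral I would differentiate the Beta integral $\int_0^1 t^{a-1}(1-t)^{k-2}\,dt=B(a,k-1)$ in the parameter $a$ at $a=1$, obtaining
\[
\int_0^1\log t\,(1-t)^{k-2}\,dt=B(1,k-1)\bigl(\psi(1)-\psi(k)\bigr)=-\frac{1}{k-1}\sum_{j=1}^{k-1}\frac1j,
\]
where I use $B(1,k-1)=\tfrac{1}{k-1}$ and $\psi(k)-\psi(1)=\sum_{j=1}^{k-1}\frac1j$ for the digamma function $\psi$. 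Substituting back yields the clean identity
\[
\mathbf{I}(k)=\frac12\sum_{j=1}^{k-1}\frac1j.
\]

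Finally I would bound the harmonic sum by $\sum_{j=1}^{k-1}\frac1j\leq1+\log(k-1)\leq1+\log k$, giving $\mathbf{I}(k)\leq\frac12\log k+\frac12$ for all $k\geq1$, so the statement holds with, for instance, $a=b=2$. I expect the only genuine obstacle to be the careful justification of the marginal density formula in the second step; once that identity is secured, the polar-coordinate reduction and the digamma evaluation are entirely routine. The Gaussian derivation keeps the bookkeeping minimal, whereas a direct disintegration of the surface measure is the version that demands the most attention to Jacobian factors.
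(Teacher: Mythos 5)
Your proof is correct, and it takes a genuinely different route from the paper's. The paper works directly with explicit spherical coordinates on ${\mathbf S}^{2k-1}$: it writes ${\mathbf I}(k)=J(k)/(2C_{2k-2}C_{2k-3})$ with Wallis integrals $C_m=\int_0^{\pi/2}\cos^m t\,dt$ and a two-dimensional integral $J(k)$, then \emph{estimates} $J(k)$ from above by elementary inequalities (comparing $x^2+y^2-x^2y^2$ with $\tfrac{x^2+y^2}{2}$ and passing to polar coordinates), reducing to the same one-dimensional integral $-\int_0^1(1-t)^{k-2}\log t\,dt$ that you meet, which it evaluates by induction as $\tfrac{1}{k-1}\sum_{j=1}^{k-1}\tfrac1j$; finally it needs Stirling's formula to pin down $C_m\sqrt{m}$ between two constants. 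Your route instead identifies the exact marginal law of $z_k$ under the uniform measure --- $|z_k|^2\sim\mathrm{Beta}(1,k-1)$ via the Gaussian model and the Gamma--Beta relation --- and then evaluates the resulting integral by differentiating the Beta function, yielding the \emph{identity} ${\mathbf I}(k)=\tfrac12\sum_{j=1}^{k-1}\tfrac1j$ rather than only an upper bound. What your approach buys: a stronger (exact) result, no Jacobian bookkeeping in spherical coordinates, no comparison inequalities, and no Stirling estimates; the cost is importing standard probabilistic facts (unitary invariance of complex Gaussians, independence of $g/\norm{g}$ and $\norm{g}$, the Gamma--Beta relation) and the digamma identity $\psi(k)-\psi(1)=\sum_{j=1}^{k-1}\tfrac1j$, whereas the paper's argument, though cruder, is entirely self-contained calculus. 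Both proofs then finish with the same harmonic-sum bound $\sum_{j=1}^{k-1}\tfrac1j\leq1+\log k$, and your constants $a=b=2$ satisfy the required $a,b>1$.
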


\begin{proof} We use spherical coordinates 
$(\theta_1,\ldots,\theta_{2k-2},\varphi)\in\left[-\frac{\pi}{2}, \frac{\pi}{2}\right]^{2k-2}\times[0,2\pi]$ 
on ${\mathbf S}^{2k-1}$ such that 
$$z_k=\sin\theta_{2k-3}\cos\theta_{2k-2}+i\sin\theta_{2k-2}\,,\;\;
d{\mathcal A}=\cos\theta_1\cos^2\theta_2\ldots\cos^{2k-2}\theta_{2k-2}\;d\theta_1\ldots d\theta_{2k-2}d\varphi\,.$$
We obtain 
$${\mathbf I}(k)=\frac{J(k)}{2C_{2k-2}C_{2k-3}}\,, \text{ where } C_k=\int_0^{\pi/2}\cos^kt\,dt\,,$$ 
and, after performing the change of variables $x=\sin\theta_{2k-2}\,$, $y=\sin\theta_{2k-3}$, 
$$J(k)=-\int_0^1\int_0^1(1-x^2)^{k-3/2}(1-y^2)^{k-2}\log(x^2+y^2-x^2y^2)\,dxdy\,.$$
Since $2x^2y^2\leq x^2+y^2$ for $0\leq x,y\leq1$, we have 
\begin{eqnarray*}
-\log(x^2+y^2-x^2y^2)&\leq&-\log\frac{x^2+y^2}{2}\;,\\
(1-x^2)^{k-3/2}(1-y^2)^{k-2}&\leq&(1-x^2-y^2+x^2y^2)^{k-2}\leq\left(1-\frac{x^2+y^2}{2}\right)^{k-2}\;,
\end{eqnarray*}
provided that $k\geq2$. If $D=\{(x,y)\in{\mathbb R}^2:\,x\geq0,\,y\geq0,\,x^2+y^2\leq2\}$ it follows that 
\begin{eqnarray*}
J(k)&\leq&-\iint_D\left(1-\frac{x^2+y^2}{2}\right)^{k-2}\log\frac{x^2+y^2}{2}\;dxdy\\
&=&-\frac{\pi}{2}\int_0^{\sqrt2}r\left(1-\frac{r^2}{2}\right)^{k-2}\log\frac{r^2}{2}\;dr
=-\frac{\pi}{2}\int_0^1(1-t)^{k-2}\log t\,dt\;.
\end{eqnarray*}
One shows by induction on $k\geq2$ that 
$$-\int_0^1(1-t)^{k-2}\log t\,dt=\frac{1}{k-1}\,\left(1+\frac{1}{2}+\ldots+\frac{1}{k-1}\right)\leq\frac{1+\log(k-1)}{k-1}\;.$$
The lemma follows since there exist constants $A,B>0$ such that $A\leq C_k\sqrt{k}\leq B$ for all $k\geq1$. Indeed, an integration by parts shows that 
$$C_{k+2}=\frac{k+1}{k+2}\,C_k\,,\,\text{ so } C_{2k}=\frac{\pi\,(2k)!}{2^{2k+1}(k!)^2}\,,\,\;
C_{2k+1}=\frac{2^{2k}(k!)^2}{(2k+1)!}\,,\,\text{ for } k\geq0\,.$$
By Stirling's formula $k!\sim (k/e)^k\sqrt{k}$, $(2k)!\sim(2k/e)^{2k}\sqrt{2k}\,$, which implies our claim.
\end{proof}

\smallskip

\begin{proof}[Proof of Theorem \ref{T:a2}]
Using \eqref{e:logdp} we can find a sequence of integers 
$p_j\nearrow\infty$ such that 
$\sum_{j=1}^\infty\frac{\log d_{p_j}}{A_{p_j}}<\infty$. 
We define 
$$Y_p:{\mathcal S}_\infty\longrightarrow{\mathbb R}\,,\;
Y_p(\sigma)=\frac{1}{A_p}\int_X\log\frac{|\sigma_p|_{h_p}}{
\sqrt{P_p}}\;\omega^n\;,\;\;\text{where}\;\sigma
=\{\sigma_p\}_{p\geq1}\,.$$
By Theorem \ref{T:mt1} we have $\frac{1}{A_p}\,\log P_p\to 0$
as $p\to\infty$, in $L^1(X,\omega^n)$. Since 
$\log\frac{|\sigma_p|_{h_p}}{\sqrt{P_p}}\leq0$ on $X$ for 
$\sigma=\{\sigma_p\}_{p\geq1}\in{\mathcal S}_\infty$, 
it suffices to show that $Y_{p_j}(\sigma)\to0$ as $j\to\infty$
for $\lambda_\infty$-a.e. $\sigma\in{\mathcal S}_\infty$.
By Tonelli's theorem we have
$$\int_{{\mathcal S}_\infty}Y_p(\sigma)\,d\lambda_\infty
=\frac{1}{A_p}\int_{{\mathcal S}^p}
\left(\int_X\log\frac{|\sigma_p|_{h_p}}{\sqrt{P_p}}\;
\omega^n\right)d\lambda_p=\frac{1}{A_p}
\int_X\left(\int_{{\mathcal S}^p}\log\frac{|\sigma_p|_{h_p}}{
\sqrt{P_p}}\;d\lambda_p\right)\omega^n\,.$$
For a fixed $x\in X$, we write $S_l^p=s_l^pe_p$ for
some holomorphic frame $e_p$ of $L_p$ near $x$ and we set 
$$u=(u_1,\ldots,u_{d_p})\;,\;\;u_l=\frac{s_l^p}{\sqrt{|s_1^p|^2
+\ldots+|s_{d_p}^p|^2}}\;.$$
Then the integral 
$$\int_{{\mathcal S}^p}\log\frac{|\sigma_p(x)|_{h_p}}{
\sqrt{P_p(x)}}\;d\lambda_p=\int_{{\mathbf S}^{2d_p-1}}
\log|a\cdot u(x)|\,d\lambda_p(a)=-{\mathbf I}(d_p)$$
is independent of $x$, where 
$a\cdot u=a_1u_1+\ldots+a_{d_p}u_{d_p}$ and ${\mathbf I}(k)$ 
is defined in Lemma \ref{L:Ik}. Using Lemma \ref{L:Ik} it follows that 
$$\int_{{\mathcal S}_\infty}Y_p(\sigma)\,d\lambda_\infty
\geq-\frac{a\log d_p+b}{A_p}\,\int_X\omega^n\,.$$
The definition of the sequence $\{p_j\}_{j\geq1}$ shows that 
$$\sum_{j=1}^\infty\int_{{\mathcal S}_\infty}Y_{p_j}(\sigma)\,
d\lambda_\infty>-\infty\,.$$ 
Since $Y_p\leq0$ this implies that $\sum_{j=1}^\infty Y_{p_j}$ 
converges in $L^1({\mathcal S}_\infty,\lambda_\infty)$, hence 
$Y_{p_j}(\sigma)\to0$ as $j\to\infty$ for $\lambda_\infty$-a.e. 
$\sigma\in{\mathcal S}_\infty$.
\end{proof}

\par Let us give two general examples in which condition 
\eqref{e:logdp} holds true.  

\begin{Proposition}\label{P:logdp1}
Let $(X,\omega)$, $(L_p,h_p)$, $p\geq1$, verify 
assumptions (A)-(B) and assume that $X$ is smooth and that 
each line bundle $L_p$ has a continuous metric $h'_p$ with 
the following property: every $x\in X$ has a contractible Stein 
coordinate neighborhood $W_x$ on which each metric $h'_p$ 
has a weight $\psi'_p$ such that the family 
$\{\psi'_p/A_p\}_{p\geq1}$ is equicontinuous on $W_x$. Then 
$$\lim_{p\to\infty}\frac{\log\dim H^0(X,L_p)}{A_p}=0\,.$$
\end{Proposition}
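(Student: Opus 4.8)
The plan is to bound $\dim H^0(X,L_p)$ from above by the supremum of a Bergman kernel function computed with respect to the auxiliary \emph{continuous} metric $h'_p$, and then to control that supremum via the equicontinuity hypothesis together with the sub-mean value inequality. Since $X$ is smooth and compact and $h'_p$ is continuous, every holomorphic section of $L_p$ is square-integrable for the norm $\|S\|'^2_p:=\int_X|S|^2_{h'_p}\,\omega^n/n!$, so $H^0(X,L_p)=H^0_{(2)}(X,L_p)$ for this norm. Writing $P'_p$ for the associated Bergman kernel function and $\{S^p_j\}_{j=1}^{d_p}$ for an orthonormal basis, one has $d_p:=\dim H^0(X,L_p)=\int_XP'_p\,\omega^n/n!\leq V\sup_XP'_p$, where $V=\frac1{n!}\int_X\omega^n$. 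Thus it suffices to prove $\frac1{A_p}\log\sup_XP'_p\to0$. Moreover $A_p\geq a_p\int_X\omega^n\to\infty$, and $d_p\geq1$ for large $p$ (e.g.\ by the peak sections constructed in the proof of Theorem \ref{T:mt1}), so $\frac{\log d_p}{A_p}\geq0$ eventually; hence it is enough to establish $\limsup_{p\to\infty}\frac1{A_p}\log\sup_XP'_p\leq0$.

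For the pointwise estimate I would fix $x\in X$, let $W$ be a contractible Stein coordinate neighborhood containing a ball $B(x,r)$ on which $h'_p$ has a weight $\psi'_p$ as in the hypothesis, and use the variational formula $P'_p(x)=\max\{|S(x)|^2_{h'_p}:\,\|S\|'_p=1\}$. Writing $S=s\,e'_p$ with $s$ holomorphic and $|e'_p|_{h'_p}=e^{-\psi'_p}$, so that $|S|_{h'_p}=|s|\,e^{-\psi'_p}$, I would apply the sub-mean value inequality to the plurisubharmonic function $|s|^2$ together with the comparison of $\omega^n/n!$ with Lebesgue measure on $W$ to get
\begin{equation*}
\|S\|'^2_p\geq\int_{B(x,r)}|s|^2e^{-2\psi'_p}\,\frac{\omega^n}{n!}\geq c\,r^{2n}\,e^{-2\max_{B(x,r)}\psi'_p}\,|s(x)|^2\,.
\end{equation*}
Consequently
\begin{equation*}
|S(x)|^2_{h'_p}=|s(x)|^2e^{-2\psi'_p(x)}\leq\frac{1}{c\,r^{2n}}\,e^{2(\max_{B(x,r)}\psi'_p-\psi'_p(x))}\,\|S\|'^2_p\,.
\end{equation*}
The equicontinuity of $\{\psi'_p/A_p\}_{p\geq1}$ translates into $\max_{B(x,r)}\psi'_p-\psi'_p(x)\leq A_p\,\varepsilon(r)$ with a modulus $\varepsilon(r)\to0$ that does \emph{not} depend on $p$, yielding $P'_p(x)\leq(c\,r^{2n})^{-1}e^{2A_p\varepsilon(r)}$.

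To finish I would make the modulus $\varepsilon$ and the constant $c$ (coming from the volume of balls and the measure comparison) uniform over $X$: cover $X$ by finitely many neighborhoods $W_{x_1},\dots,W_{x_N}$ as in the hypothesis, let $r_0$ be a Lebesgue number of this cover so that every ball $B(x,r)$ with $r\leq r_0$ lies in some $W_{x_k}$, and set $\varepsilon=\max_k\varepsilon_k$, $c=\min_k c_k$. Then the bound above holds for all $x\in X$, all $p$, and all $r\leq r_0$. Taking logarithms, dividing by $A_p$, letting $p\to\infty$ and then $r\searrow0$ gives $\limsup_{p\to\infty}\frac1{A_p}\log\sup_XP'_p\leq0$, which by the first paragraph proves the claim. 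The main point requiring care—rather than a genuine obstacle—is precisely this uniformity across $X$ and $p$: everything reduces to a routine sub-mean value estimate once equicontinuity is used to produce a single modulus $\varepsilon(r)$, independent of $p$, controlling $\max_{B(x,r)}\psi'_p-\psi'_p(x)$ by $A_p\varepsilon(r)$.
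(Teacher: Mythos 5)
Your proof is correct and follows essentially the same route as the paper's: both bound the Bergman kernel $P'_p$ of $H^0(X,L_p)$ with respect to the continuous metric $h'_p$ by a sub-mean value estimate of the form $P'_p(z)\leq C r^{-2n}\exp\big(2\max_{B(z,r)}\psi'_p-2\psi'_p(z)\big)$, use equicontinuity to dominate the oscillation by $A_p\varepsilon$ uniformly in $p$, make the bound uniform over $X$ by compactness, and integrate $P'_p$ to bound $\dim H^0(X,L_p)$. Your explicit treatment of the lower bound ($d_p\geq1$ for large $p$ via the peak sections from Theorem \ref{T:mt1}) is a detail the paper leaves implicit, not a different method.
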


\begin{proof} Let $\varepsilon>0$ and let $P'_p$ be the 
    Bergman kernel function of the space $H^0(X,L_p)$ with 
    respect to the metrics $h'_p$ and $\omega$. For $x\in X$ 
    fix $r_x>0$ so that the (closed) ball $B(x,2r_x)\subset W_x$ 
    and let $U_x=B(x,r_x)$. The proof of the upper bound
    in \eqref{e:Bkf} works for any metric on $L_p$ 
    (see also \cite[(7)]{CM11}) and shows that 
$$P'_p(z)\leq C_xr^{-2n}\exp\left(2\max_{B(z,r)}\psi'_p
-2\psi'_p(z)\right),$$
for any $p\geq1$, $r<r_x$ and $z\in U_x$, where $C_x$ is
a constant depending only on $x$. The equicontinuity assumption 
implies that there exists $r_1=r_1(x,\varepsilon)<r_x$ such that
$2\max_{B(z,r_1)}\psi'_p-2\psi'_p(z)\leq A_p\varepsilon$ for all 
$p\geq1$ and $z\in U_x$, hence $P'_p(z)\leq C_xr_1^{-2n}
\exp(A_p\varepsilon)$. A standard compactness argument now 
shows that there exists a constant $C'=C'(\varepsilon)>0$ 
such that $P'_p\leq C'\exp(A_p\varepsilon)$ holds on $X$ for 
all $p\geq1$. It follows that 
$$\dim H^0(X,L_p)=\int_XP'_p\,\frac{\omega^n}{n!}\leq C'\exp(A_p
\varepsilon)\int_X\frac{\omega^n}{n!}\;,\;\;p\geq1\,,$$
which implies the conclusion of the proposition. 
\end{proof}

\par The second general example is provided by the class of 
semi-ample line bundles. Recall that a line bundle $L$ on $X$ is 
called semi-ample if $L^k$ is globally generated for some $k>0$, 
or, equivalently, the space $H^0(X,L^k)$ has no base locus. 

\begin{Proposition}\label{P:logdp2}
Let $(X,\omega)$, $(L_p,h_p)$, $p\geq1$, verify 
assumptions (A)-(B) and assume that $X$ is smooth and that each
line bundle $L_p$ is semi-ample. Then there exist an integer $N>0$ 
and a constant $C>0$ depending only on $\omega$ such that 
$$\dim H^0(X,L_p)\leq CA_p^N\;,\;\;\forall\,p\geq1\,.$$
\end{Proposition}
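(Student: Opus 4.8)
The plan is to bound $d_p:=\dim H^0(X,L_p)$ by the degree of the image of the linear system $|L_p|$, and then to control that degree by the intersection number $A_p$ via the Khovanskii--Teissier inequalities. First I would record two preliminaries. Since assumption (B) furnishes a bundle with $c_1(L_p,h_p)\ge a_p\omega$, Proposition \ref{P:Moishezon} shows that the compact K\"ahler manifold $X$ is Moishezon, hence projective; moreover a semi-ample line bundle is nef. I may therefore use projective-algebraic tools. Fix $p$ with $d_p\ge1$ and let $\mu:\wi X\to X$ be a resolution (by blow-ups) that principalizes the base ideal of $|L_p|$, so that $\mu^*L_p=M_p\otimes\mathcal O_{\wi X}(E_p)$, where $E_p\ge0$ is the fixed divisor and $M_p$ is base-point-free. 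Since $\mu_*\mathcal O_{\wi X}=\mathcal O_X$ and multiplication by the section cutting out $E_p$ is an isomorphism onto its image, $H^0(\wi X,M_p)\cong H^0(X,L_p)$, so $d_p=h^0(\wi X,M_p)$. The system $|M_p|$ defines a morphism $\psi_p:\wi X\to\mathbb P^{d_p-1}$ with $M_p=\psi_p^*\mathcal O(1)$ onto an irreducible, non-degenerate image $Y_p$ of some dimension $m_p\le n$. The classical lower bound on the degree of a non-degenerate subvariety of projective space then gives $\deg Y_p\ge d_p-m_p$, so $d_p\le\deg Y_p+n$ and the task reduces to bounding $\deg Y_p$.

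To estimate $\deg Y_p$ I would set $\Theta:=\mu^*\omega$ (a smooth nef form) and compute on $\wi X$. In cohomology $c_1(\mathcal O(1))^{m_p}|_{Y_p}=(\deg Y_p)\,[\mathrm{pt}]$, and pulling back gives $c_1(M_p)^{m_p}=(\deg Y_p)\,[F]$ in $H^{2m_p}(\wi X)$, where $F=\psi_p^{-1}(\mathrm{pt})$ is a generic fiber. Hence
$$\int_{\wi X}c_1(M_p)^{m_p}\wedge\Theta^{n-m_p}=(\deg Y_p)\int_F\Theta^{n-m_p}=(\deg Y_p)\int_{\mu(F)}\omega^{n-m_p}.$$
As $\mu$ is birational, $\mu(F)$ is a subvariety of $X$ of dimension $n-m_p$, so $\int_{\mu(F)}\omega^{n-m_p}$ is a volume bounded below by a constant $c>0$ depending only on $(X,\omega)$, giving $\deg Y_p\le c^{-1}\int_{\wi X}c_1(M_p)^{m_p}\wedge\Theta^{n-m_p}$. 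Since $M_p$ and $\Theta$ are nef, the Khovanskii--Teissier log-concavity of $k\mapsto\int_{\wi X}c_1(M_p)^{k}\wedge\Theta^{n-k}$ yields $\int_{\wi X}c_1(M_p)^{m_p}\wedge\Theta^{n-m_p}\le\big(\int_{\wi X}\Theta^n\big)^{1-m_p}\big(\int_{\wi X}c_1(M_p)\wedge\Theta^{n-1}\big)^{m_p}$. Here $\int_{\wi X}\Theta^n=\int_X\omega^n$, and since $E_p$ is effective and $\Theta$ is nef, $\int_{\wi X}c_1(M_p)\wedge\Theta^{n-1}\le\int_{\wi X}\mu^*c_1(L_p)\wedge\Theta^{n-1}=\int_X c_1(L_p)\wedge\omega^{n-1}=A_p$. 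Combining these with $m_p\le n$ and $A_p\ge1$ for large $p$ gives $d_p\le C A_p^{n}$, so the proposition holds with $N=n$ and $C$ depending only on $(X,\omega)$ (enlarging $C$ to absorb the finitely many small $p$).

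The step I expect to be the main obstacle is the \emph{uniform} lower bound $\int_{\mu(F)}\omega^{n-m_p}\ge c>0$ on the fiber volumes, since it is exactly this uniformity (in $p$ and in the possible fiber dimension) that forces the final constant to depend only on $(X,\omega)$. I would obtain it from the standard fact that on a compact K\"ahler manifold every irreducible analytic subvariety of dimension $k\ge1$ has $\omega$-volume bounded below by a positive constant depending only on $k$ and $(X,\omega)$ (a consequence of Lelong's monotonicity and the Wirtinger inequality, the Lelong number along such a subvariety being at least one), together with the trivial bound $\int_F\Theta^0=\#F\ge1$ when $m_p=n$. The other ingredients---the minimal-degree inequality for non-degenerate varieties, the identity $\psi_p^*[\mathrm{pt}]=[F]$ (valid since $\psi_p$ is smooth over a general point of $Y_p$ in characteristic zero), and the Khovanskii--Teissier inequalities for nef classes---are standard; one only needs to check that $\mu$ does not contract the generic fiber $F$, which holds because $\mu$ is an isomorphism over a dense open set that a generic $F$ meets in a dense subset.
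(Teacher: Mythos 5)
Your proposal is correct, but it takes a genuinely different route from the paper's. The paper reduces the statement to two cited results: the Riemann--Roch-type inequality of Koll\'ar--Matsusaka \cite{KolMat}, which for a semi-ample line bundle $L$ bounds $\dim H^0(X,L)$ by a universal polynomial (depending only on $\dim X$) in the intersection numbers $\int_X c_1(L)^n$ and $\int_X c_1(L)^{n-1}\wedge c_1(X)$, together with Lemma \ref{L:cohom}, which dominates these intersection numbers by powers of $A_p$ through an analytic argument valid for arbitrary pseudoeffective classes (Demailly regularization plus a uniform Lelong-number bound). You instead pass to a resolution of the base locus, bound $d_p\leq\deg Y_p+n$ by the classical minimal-degree inequality for irreducible non-degenerate projective varieties, and control $\deg Y_p$ by combining the uniform lower bound for $\omega$-volumes of positive-dimensional subvarieties of $X$ (Lelong monotonicity) with Khovanskii--Teissier log-concavity for the nef classes $c_1(M_p)$ and $\mu^*\omega$; applying Khovanskii--Teissier on the $p$-dependent resolution is harmless precisely because that inequality carries no constants, while the two quantities that must be uniform in $p$, namely the fiber-volume bound and $\int_{\wi X}(\mu^*\omega)^n=\int_X\omega^n$, live on $X$. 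Your route is more elementary (no Koll\'ar--Matsusaka), yields the explicit exponent $N=n$, and in fact never uses semi-ampleness of $L_p$: the moving part $M_p$ is base-point-free by construction, so your argument proves the proposition for arbitrary line bundles $L_p$ satisfying (A)-(B). What the paper's route buys is brevity given the references, plus Lemma \ref{L:cohom} itself, which works for pseudoeffective classes directly on $X$ (no resolution, no nefness) and is of independent use. One shared blemish: both proofs dispose of the finitely many $p$ with $A_p$ small by enlarging $C$, which strictly speaking makes $C$ depend on those finitely many bundles rather than only on $\omega$; this is harmless for the intended application, where only large $p$ matters.
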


\begin{proof} Since $L_p$ is big, $X$ is Moishezon and hence 
    projective since it is K\"ahler. By the main theorem in
    \cite{KolMat}, there exists a polynomial $Q(y,z)$ depending 
    only on $\dim X$ such that for any semi-ample line bundle $L$ 
    on $X$ one has that 
$$\dim H^0(X,L)\leq Q\left(\int_Xc_1(L)^n\;,
\int_Xc_1(L)^{n-1}\wedge c_1(X)\right).$$
Lemma \ref{L:cohom} below implies that there 
exists a constant $C'>0$ depending only on $(X,\omega)$ such 
that 
$$\int_Xc_1(L_p)^n\leq C'A_p^n\;,\;\;\int_Xc_1(L_p)^{n-1}
\wedge c_1(X)\leq C'A_p^{n-1}\,.$$
The conclusion now follows.
\end{proof}

\begin{Lemma}\label{L:cohom} Let $(X,\omega)$ be a compact
    K\"ahler manifold of dimension $n$ and let $\beta$ be a real 
    valued closed form of type $(1,1)$ on $X$. Then there exists 
     $C>0$ depending only on $\omega$ and $\beta$ 
    such that for any pseudoeffective class 
    $\alpha\in H^{1,1}(X,{\mathbb R})$ we have 
$$\int_X\alpha^k\wedge\beta^{n-k}\leq C\|\alpha\|^k\;,\;k=1,
\dots,n\;,\;\;\text{ where } \|\alpha\|=\int_X\alpha\wedge
\omega^{n-1}.$$ 
\end{Lemma}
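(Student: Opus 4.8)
The plan is to read everything cohomologically. For $\alpha,\beta\in H^{1,1}(X,\R)$ the quantity $\int_X\alpha^k\wedge\beta^{n-k}$ is the cup product $\alpha^k\cup[\beta]^{n-k}$ evaluated on the fundamental class, so it does not involve any product of currents and is simply a polynomial, hence continuous, function of $\alpha$ that is homogeneous of degree $k$. Since $X$ is compact K\"ahler, $V:=H^{1,1}(X,\R)$ is finite dimensional and the pseudoeffective classes form a closed convex cone $\mathcal E\subset V$. Both sides of the asserted inequality are homogeneous of degree $k$ in $\alpha$, so after the substitution $\alpha\mapsto\alpha/\|\alpha\|$ it suffices to bound $f_k(\alpha):=\int_X\alpha^k\wedge\beta^{n-k}$ on the slice $S:=\{\alpha\in\mathcal E:\|\alpha\|=1\}$. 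The entire proof then reduces to showing that $S$ is compact, after which continuity finishes the job.

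The key step is to verify that the linear functional $\ell(\alpha):=\|\alpha\|=\int_X\alpha\wedge\omega^{n-1}$ is strictly positive on $\mathcal E\setminus\{0\}$. Given a nonzero $\alpha\in\mathcal E$, I would choose a positive closed $(1,1)$-current $T$ with $\{T\}=\alpha$. Because $\omega^{n-1}$ is closed, $\int_XT\wedge\omega^{n-1}$ depends only on cohomology classes, so $\ell(\alpha)=\int_XT\wedge\omega^{n-1}$, which is exactly the total mass of the trace measure $T\wedge\omega^{n-1}\geq0$. If this mass vanished, the trace measure would be zero; since the trace measure of a positive current dominates all of its coefficients, this forces $T=0$ and hence $\alpha=0$, a contradiction. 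Thus $\ell>0$ on $\mathcal E\setminus\{0\}$.

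Compactness of $S$ then follows from a standard cone argument. The set $S$ is closed, being the intersection of the closed cone $\mathcal E$ with the hyperplane $\{\ell=1\}$. If $S$ were unbounded, pick $\alpha_j\in S$ with $|\alpha_j|\to\infty$; then $\alpha_j/|\alpha_j|\in\mathcal E$ lie on the unit sphere of $V$ and, along a subsequence, converge to some $w\in\mathcal E$ with $|w|=1$. But $\ell(w)=\lim\ell(\alpha_j)/|\alpha_j|=\lim 1/|\alpha_j|=0$, contradicting the positivity just established. Hence $S$ is compact. Each $f_k$ is continuous, so $C_k:=\sup_S f_k<\infty$; setting $C:=\max\{1,C_1,\dots,C_n\}$ and using homogeneity (for $\alpha\neq0$ write $\alpha=\ell(\alpha)\,\alpha'$ with $\alpha'\in S$, so $f_k(\alpha)=\ell(\alpha)^k f_k(\alpha')\leq C\|\alpha\|^k$, the case $\alpha=0$ being trivial) yields the inequality for all $k=1,\dots,n$ at once. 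The resulting $C$ depends on $S$, hence on $\omega$, and on $\beta$ through the $f_k$, as required.

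The only genuine content, and the step I expect to be the main point to get right, is the positivity of $\ell$ on the cone, i.e.\ that a nonzero pseudoeffective class has strictly positive $\omega^{n-1}$-degree; this is where representation by a positive current and the domination of a positive current by its trace measure are essential. Everything else is formal: finite dimensionality of $H^{1,1}(X,\R)$, closedness of the pseudoeffective cone (itself a consequence of weak compactness of positive closed currents of bounded mass), the continuity and degree-$k$ homogeneity of $f_k$, and the compactness of the slice $S$.
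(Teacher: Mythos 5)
Your proof is correct, and it takes a genuinely different route from the paper's. You argue purely cohomologically: in the finite-dimensional space $H^{1,1}(X,\R)$ the intersection numbers $f_k(\alpha)=\int_X\alpha^k\wedge\beta^{n-k}$ are polynomial functions of $\alpha$, the pseudoeffective cone is closed, and the degree $\ell(\alpha)=\int_X\alpha\wedge\omega^{n-1}$ is strictly positive on nonzero pseudoeffective classes (a positive closed current with vanishing trace measure is zero), so the slice $\{\ell=1\}$ of the cone is compact and homogeneity finishes the proof. The paper instead works with currents and forms: it represents $\alpha$ by a positive closed current $T$, bounds the Lelong numbers of $T$ by $C_1\|\alpha\|$ (quoting Boucksom), uses Demailly's regularization theorem to produce a smooth form $R\in\alpha$ with $R\geq -C_3\|\alpha\|\omega$, makes $\beta$ semipositive by adding $c\,\omega$, and then, comparing the expansions of the wedge products term by term, reduces the lemma to the estimate $\int_X S^k\wedge(\beta+c\,\omega)^{n-k}\leq C_4\|S\|^k$ for the semipositive form $S=R+C_3\|\alpha\|\omega$. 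Your approach is softer and more self-contained: it needs neither regularization nor Lelong-number bounds, and it treats the pseudoeffective case directly instead of reducing to the semipositive one; note that the paper's final estimate (the one with constant $C_4$) is itself a statement of the same type as the lemma, only asserted there, and provable either by Khovanskii--Teissier log-concavity or by exactly your compactness argument applied to the nef cone. What the paper's route buys is a constant with traceable geometric dependence ($C_1$, $C_2$, $c$, $C_4$), whereas yours arises from an inexplicit supremum over a compact slice; since the lemma only asks for some $C$ depending on $\omega$ and $\beta$, your proof fully establishes it.
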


\begin{proof} If $\|\alpha\|=0$ then $\alpha=0$ since $\alpha$
    is pseudoeffective, so we can assume $\|\alpha\|>0$. 
    Let $\theta\in\alpha$ be a smooth form and 
    $T=\theta+dd^c\varphi$ be a positive closed current,
    where $\varphi$ is a $\theta$-psh function. 
    The Lelong numbers $\nu(T,x)<C_1\|\alpha\|$ for all $x\in X$, 
    where $C_1$ is a constant depending only on $\omega$
    (see e.g., \cite[Lemma 2.5]{Bo02}). Demailly's regularization 
    theorem \cite{D92} shows that there exists a sequence of 
    smooth functions $\varphi_k\searrow\varphi$ such 
    $\theta+dd^c\varphi_k\geq-C_2\lambda_k\omega$, where 
    $\lambda_k$ are continuous functions on $X$, 
    $\lambda_k(x)\searrow\nu(T,x)$ as $k\to\infty$ for every 
    $x\in X$, and $C_2$ is a constant depending only on $\omega$. 
    We fix $k$ so that $\lambda_k(x)<C_1\|\alpha\|$ for every 
    $x\in X$ and let $R=\theta+dd^c\varphi_k$ and 
    $R'=R+C_3\|\alpha\|\omega$, where $C_3=C_1C_2$, so 
    $R'\geq0$. Next we set $\beta'=\beta+c\, \omega$, where 
    $c>0$ is chosen so that $\beta'\geq0$. Since $R',\beta'\geq0$ 
    we obtain 
\begin{eqnarray*}
\int_X\alpha^k\wedge\beta^{n-k}&=&\int_XR^k\wedge\beta^{n-k}
=\int_X(R'-C_3\|\alpha\|\omega)^k\wedge(\beta'-c\omega)^{n-k}\\
&\leq&\int_X(R'+C_3\|\alpha\|\omega)^k
\wedge(\beta'+c\omega)^{n-k}
\leq C_4\Big\|R'+C_3\|\alpha\|\omega\Big\|^k\,,\end{eqnarray*}
where $C_4$ is a constant depending only on the K\"ahler form 
$\beta'+c\, \omega$ (hence on $\beta$ and $\omega$). 
The lemma follows since 
$$\Big\|R'+C_3\|\alpha\|\omega\Big\|
=\int_X(R+2C_3\|\alpha\|\omega)\wedge\omega^{n-1}
=\left(1+2C_3\int_X\omega^n\right)\|\alpha\|\,.$$
\end{proof}

\medskip

\par We conclude this section by discussing an application of
the above results to the problem of approximation of positive 
closed currents of bidegree $(1,1)$ on $X$ by currents of 
integration along analytic hypersurfaces of $X$. Let $\cA(X)$ 
be the space of positive closed currents 
$T\in\mathcal{D}'_{n-1,n-1}(X)$ with the property that there 
exist a sequence of singular Hermitian holomorphic line bundles 
$\{(F_p,h^{F_p})\}_{p\geq1}$ with 
$c_1(F_p,h^{F_p})\geq0$ and a sequence of natural numbers 
$N_p\to\infty$ such that 
\[\lim_{p\to\infty}\,\frac{1}{N_p}\,c_1(F_p,h^{F_p})=T\,.\]
If $X_{\rm sing}\neq\emptyset$ we require in addition that there 
exists a current $T_0\in\cT(X)$ (depending on $T$) such that
for all $p\geq1$ we have $\frac{1}{N_p}\,c_1(F_p,h^{F_p})
\leq T_0$.

\par When $X$ is smooth the space $\cA(X)$ is the closure in 
$\mathcal{D}'_{n-1,n-1}(X)$ of the convex cone generated by 
positive closed integral currents. Recall that a real closed current
$T\in\mathcal{D}'_{n-1,n-1}(X)$ is called integral if its 
de Rham cohomology class $[T]$ belongs to 
$H^{1,1}(X,\R)\cap H^2(X,\Z)$. A current $T$ is integral if 
and only if there exists a singular Hermitian holomorphic line bundle
$(L,h)$ on $X$ with $c_1(L,h)=T$ 
(see e.g., \cite[Lemma 2.3.5]{MM07}).
\begin{Theorem}\label{T:approx}
Let $(X,\omega)$ be a compact normal K\"ahler space and $(L,h)$ 
be a singular Hermitian holomorphic line bundle on $X$ such that 
$c_1(L,h)\geq\varepsilon\omega$ for some $\varepsilon>0$.
If $T\in\cA(X)$ then there exist a sequence of singular Hermitian 
holomorphic line bundles $\{(L_p,h_p)\}_{p\geq1}$ with
$c_1(L_p,h_p)\geq0$ and a sequence of natural numbers 
$N_p\to\infty$ such that for $\lambda_\infty$-a.e. sequence 
$\{\sigma_p\}_{p\geq1}\in{\mathcal S}_\infty\,$,
$$\lim_{p\to\infty}\,\frac{1}{N_p}[\sigma_p=0]=T\;
\text{ weakly on $X$}.$$
Here the probability space $({\mathcal S}_\infty,\lambda_\infty)$
is associated as above to the sequence $\{(L_p,h_p)\}_{p\geq1}$. 
\end{Theorem}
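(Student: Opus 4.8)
The plan is to reduce the statement to an application of Theorem \ref{T:a1}. By definition of $\cA(X)$, since $T\in\cA(X)$ there are singular Hermitian holomorphic line bundles $(F_p,h^{F_p})$ with $c_1(F_p,h^{F_p})\geq0$ and natural numbers $N_p\to\infty$ such that $\frac1{N_p}c_1(F_p,h^{F_p})\to T$, and, if $X_{\rm sing}\neq\emptyset$, a current $T_0\in\cT(X)$ with $\frac1{N_p}c_1(F_p,h^{F_p})\leq T_0$ for all $p$. These bundles only satisfy $c_1\geq0$, so they do not verify the strict positivity required in (B); the idea is to borrow positivity from the fixed bundle $(L,h)$, whose curvature is bounded below by $\varepsilon\omega$, by tensoring with a slowly growing power.

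Concretely, I would set $L_p=F_p\otimes L^{m_p}$ and $h_p=h^{F_p}\otimes h^{m_p}$, so that $c_1(L_p,h_p)=c_1(F_p,h^{F_p})+m_p\,c_1(L,h)\geq m_p\varepsilon\,\omega$. Choosing $m_p\to\infty$ gives assumption (B) with $a_p=m_p\varepsilon$, while imposing $m_p=o(N_p)$ (e.g.\ $m_p=\lfloor\sqrt{N_p}\rfloor$) ensures $\frac1{N_p}c_1(L_p,h_p)=\frac1{N_p}c_1(F_p,h^{F_p})+\frac{m_p}{N_p}c_1(L,h)\to T$. Writing $\tau=\int_XT\wedge\omega^{n-1}$ and $A_p=\int_Xc_1(L_p,h_p)\wedge\omega^{n-1}$, pairing the weak convergence with the smooth form $\omega^{n-1}$ gives $A_p/N_p\to\tau$. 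After passing to a subsequence with $\sum_p1/N_p^2<\infty$ (possible since $N_p\to\infty$), the summability hypothesis $\sum_p1/A_p^2<\infty$ of Theorem \ref{T:a1} then holds whenever $\tau>0$, because $A_p\sim\tau N_p$.

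For the domination \eqref{e:domin0} in the singular case I would use $c_1(L_p,h_p)\leq N_pT_0+m_p\,c_1(L,h)$ together with $c_1(L,h)\in\cT(X)$. Assuming $T\neq0$, so that $\tau>0$ and $A_p\sim\tau N_p$, one has $N_p\leq 2\tau^{-1}A_p$ and $m_p\leq c^{-1}A_p$ for large $p$, where $c=\int_Xc_1(L,h)\wedge\omega^{n-1}>0$; hence $c_1(L_p,h_p)\leq A_p\,T_0'$ with $T_0'=2\tau^{-1}T_0+c^{-1}c_1(L,h)\in\cT(X)$, the finitely many initial indices being absorbed by enlarging $T_0'$. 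Now Theorem \ref{T:a1} applies and yields, for $\lambda_\infty$-a.e.\ sequence, $\frac1{A_p}([\sigma_p=0]-c_1(L_p,h_p))\to0$ weakly. Multiplying this by the bounded factor $A_p/N_p\to\tau$ and adding $\frac1{N_p}c_1(L_p,h_p)\to T$ gives the desired $\frac1{N_p}[\sigma_p=0]\to T$.

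Finally, the degenerate case $T=0$ (equivalently $\tau=0$, since a positive closed current of vanishing mass against $\omega^{n-1}$ must vanish) has to be treated separately: here $A_p\sim\tau N_p$ collapses and the term $N_pT_0$ can no longer be dominated by $A_pT_0'$. I would then discard the $F_p$ and take $L_p=L^{m_p}$, $h_p=h^{m_p}$, for which \eqref{e:domin0} holds automatically with $T_0'=c^{-1}c_1(L,h)$ and $A_p=m_pc$; choosing $m_p\to\infty$ with $\sum_p1/m_p^2<\infty$ and any $N_p$ with $m_p=o(N_p)$, the same multiplication argument (now with $A_p/N_p\to0$) gives $\frac1{N_p}[\sigma_p=0]\to0=T$. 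I expect the main obstacle to be exactly this reconciliation of competing requirements on $m_p$: it must grow to force the positivity in (B), yet remain $o(N_p)$ to preserve the limit $T$, all while the pointwise domination \eqref{e:domin0} survives in the singular setting — which is precisely what forces the split between $T\neq0$ and $T=0$.
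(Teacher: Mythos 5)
Your proposal is correct and takes essentially the same route as the paper: both set $L_p=F_p\otimes L^{b_p}$ with $b_p\to\infty$, $b_p=o(N_p)$ (your $m_p$), arrange the summability $\sum_p 1/A_p^2<\infty$, and conclude by applying Theorem \ref{T:a1} together with $\frac{1}{N_p}\,c_1(L_p,h_p)\to T$ and the boundedness of $A_p/N_p$. The only substantive difference is that you verify the domination \eqref{e:domin0} relative to $A_p$ explicitly and isolate the degenerate case $T=0$; the paper's verification is terser and, as written, covers the case $\|T\|>0$ (where $A_p\sim\|T\|N_p$), so your case split is a careful refinement of the same argument rather than a different approach.
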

\begin{proof}
Since $T\in\cA(X)$ there exist line bundles 
$\{(F_p,h^{F_p})\}_{p\geq1}$ with  $c_1(F_p,h^{F_p})\geq0$ 
and a sequence of natural numbers $N_p\to\infty$, $p\to\infty$, 
such that $\lim_{p\to\infty}\,\frac{1}{N_p}\,c_1(F_p,h^{F_p})=T$. 
Moreover, if $X_{\rm sing}\neq\emptyset$ there exists a current
$T_0\in\cT(X)$ such that $c_1(F_p,h^{F_p})\leq N_pT_0$ for all
$p\geq1$. We can assume without loss of generality that 
$N_p\geq p$; otherwise replace $(F_p,h^{F_p})$ by 
$(F_p^{m_p},(h^{F_p})^{m_p})$ and $N_p$ by $m_pN_p$, 
with a convenient $m_p\in\N$. We fix a sequence $b_p\in\N$ 
such that $b_p\leq N_p$ for all $p\geq1$ and $b_p\to\infty$, 
$b_p/N_p\to0$ as $p\to\infty$. Let
\[L_p=F_p\otimes L^{b_p}\,,\quad 
h_p=h^{F_p}\otimes h^{b_p}\,.\]
The conclusion follows from Theorem \ref{T:a1} since 
$c_1(L_p,h_p)\geq b_p\,c_1(L,h)\geq b_p\,\varepsilon\omega$ 
and 
\begin{equation*}
\begin{split}
&c_1(L_p,h_p)\leq N_pT_0+b_p\,c_1(L,h)
\leq N_p\big(T_0+c_1(L,h)\big)\,,\\
&\frac{c_1(L_p,h_p)}{N_p}\to T\,,\quad 
\frac{\|c_1(L_p,h_p)\|}{N_p}\to \|T\|\,,\:\:\text{as $p\to\infty$}\,.
\end{split}
\end{equation*} 
\end{proof}

\par For related approximation results on compact K\"ahler manifolds we refer to \cite{D93b,D82i,G99,CM13b}. We note that Dinh and Sibony \cite{DS06} developed a general method to obtain the asymptotic distribution with speed of convergence of zeros of random sequences of holomorphic sections.

\section{Applications}\label{S:appl}

\subsection{Powers of a line bundle}\label{SS:appl1}
Theorems \ref{T:mt1}, \ref{T:a1}, \ref{T:a2} apply for the sequence $(L_p,h_p)=(L^p,h^p)$, where $(L,h)$ is a singular Hermitian holomorphic line bundle on $X$ with strictly positive curvature current.

\begin{Corollary}\label{C:Lp1}
Let $(X,\omega)$ be a compact normal K\"ahler space and $(L,h)$ 
be a singular Hermitian holomorphic line bundle on $X$ such that 
$c_1(L,h)\geq\varepsilon\omega$ for some $\varepsilon>0$. 
Then as $p\to\infty$:

\smallskip

\par (i) $\frac{1}{p}\,\log P_p\to 0$ in $L^1(X,\omega^n)$. 

\smallskip

\par (ii) $\frac{1}{p}\gamma_p\to c_1(L,h)$ weakly on $X$.
\smallskip

\par (iii) 
$\frac{1}{p}\big[\sigma_p=0\big]\to c_1(L,h)$ weakly on $X$,
for $\lambda_\infty$-a.e.\ sequence 
$\{\sigma_p\}_{p\geq1}\in{\mathcal S}_\infty$.
\end{Corollary}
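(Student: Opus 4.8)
The plan is to obtain all three assertions as immediate specializations of the general Theorems~\ref{T:mt1} and~\ref{T:a1} to the sequence $(L_p,h_p)=(L^p,h^p)$, the only real work being to check hypotheses (A)-(B) and to keep track of a normalizing constant. Assumption (A) is precisely the hypothesis of the corollary. For (B), I would observe that $c_1(L^p,h^p)=p\,c_1(L,h)\geq p\varepsilon\,\omega$, so \eqref{e:pc} holds with $a_p=p\varepsilon\to\infty$. Writing $c:=\int_X c_1(L,h)\wedge\omega^{n-1}$, the bound $c_1(L,h)\geq\varepsilon\omega$ together with compactness gives $0<\varepsilon\int_X\omega^n\leq c<\infty$, and the mass in Theorem~\ref{T:mt1} is $A_p=\int_X c_1(L^p,h^p)\wedge\omega^{n-1}=pc$. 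When $X_{\rm sing}\neq\emptyset$ I must also verify the domination condition~\eqref{e:domin0}: since $c_1(L,h)\geq0$, its local weights are psh and, $X$ being normal, extend across $X_{\rm sing}$, so $c_1(L,h)\in\cT(X)$; hence $T_0:=c^{-1}c_1(L,h)\in\cT(X)$ satisfies $c_1(L^p,h^p)=A_pT_0$ for every $p$.

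With (A)-(B) in hand, part (i) is exactly Theorem~\ref{T:mt1}(i): it yields $\frac{1}{A_p}\log P_p\to0$ in $L^1(X,\omega^n)$, and since $\frac1p=c\,A_p^{-1}$ with $c$ a fixed positive constant, this is equivalent to $\frac1p\log P_p\to0$. For part (ii) I would use that $\frac{1}{A_p}c_1(L^p,h^p)=c^{-1}c_1(L,h)$ is independent of $p$, so the ``Moreover'' clause of Theorem~\ref{T:mt1} applies with limiting current $T=c^{-1}c_1(L,h)$ and gives $\frac{1}{A_p}\gamma_p\to c^{-1}c_1(L,h)$; multiplying through by the constant $c$ then produces $\frac1p\gamma_p\to c_1(L,h)$ weakly on $X$.

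For part (iii) I would invoke Theorem~\ref{T:a1}, whose only additional hypothesis is the summability $\sum_{p\geq1}A_p^{-2}<\infty$; this holds here because $\sum_{p\geq1}A_p^{-2}=c^{-2}\sum_{p\geq1}p^{-2}<\infty$. The theorem then gives, for $\lambda_\infty$-a.e.\ sequence $\{\sigma_p\}_{p\geq1}\in\mathcal S_\infty$, the convergence $\frac{1}{A_p}[\sigma_p=0]\to c^{-1}c_1(L,h)$ weakly on $X$ (the same limit as in part (ii)), and rescaling by $c$ yields $\frac1p[\sigma_p=0]\to c_1(L,h)$, as claimed. There is no substantive obstacle in this argument; the one point needing attention is that the mass $A_p$ equals $pc$ rather than $p$, so the single positive constant $c=\int_X c_1(L,h)\wedge\omega^{n-1}$ mediates between the $A_p^{-1}$-normalization of the general theorems and the $p^{-1}$-normalization of the statement.
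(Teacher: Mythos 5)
Your proposal is correct and follows exactly the route the paper takes: it verifies assumptions (A)--(B) for $(L^p,h^p)$ with $a_p=p\varepsilon$, $T_0=c_1(L,h)/\|c_1(L,h)\|$ and $A_p=p\,\|c_1(L,h)\|$, then applies Theorems~\ref{T:mt1} and~\ref{T:a1} and rescales by the constant $\|c_1(L,h)\|$. Your write-up simply makes explicit the details (membership $c_1(L,h)\in\cT(X)$ via normality, the summability $\sum A_p^{-2}<\infty$, and the constant mediating between the $A_p^{-1}$- and $p^{-1}$-normalizations) that the paper compresses into a single sentence.
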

Indeed, assumptions (A) and (B) are satisfied with 
$a_p=p\,\varepsilon$ and $T_0=c_1(L,h)/\|c_1(L,h)\|$, 
where $\|c_1(L,h)\|:=\int_Xc_1(L,h)\wedge\omega^{n-1}$. Moreover $A_p=p\,\|c_1(L,h)\|$.

\medskip

\par We consider now the case when the curvature current 
of the singular metric is not necessarily K\"ahler. 

\begin{Corollary}\label{C:Lp2}
Let $(L,h)$ be a singular Hermitian holomorphic line bundle on 
the compact 
normal K\"ahler space $(X,\omega)$ such that $c_1(L,h)\geq0$ 
and assume that $L$ 
has a singular metric $h_0$ with 
$c_1(L,h_0)\geq\varepsilon\omega$ for some $\varepsilon>0$. 
Let $\{n_p\}_{p\geq1}$ be a sequence of natural numbers such 
that 
\begin{equation}\label{n_p}
n_p\to\infty\:\:\text{and $n_p/p\to0$ as $p\to\infty$}.
\end{equation} 
Let 
\begin{equation}\label{h_p}
h_p=h^{p-n_p}\otimes h_0^{n_p}
\end{equation}
 and $P_p\,$, 
$\gamma_p\,$, ${\mathcal S}^p$ be the Bergman kernel function, 
Fubini-Study current, 
and respectively the unit sphere, associated to the spaces 
$H^0_{(2)}(X,L_p)=H^0_{(2)}(X,L^p,h_p)$. Then, as $p\to\infty$, 
we have 
$\frac1p\,\log P_p\to 0$ in $L^1(X,\omega^n)$, 
$\frac1p\,\gamma_p\to c_1(L,h)$ and 
$\frac{1}{p}\big[\sigma_p=0\big]\to c_1(L,h)$ in the sense of 
currents on $X$, 
for $\lambda_\infty$-a.e.\ sequence 
$\{\sigma_p\}_{p\geq1}\in{\mathcal S}_\infty$.
\end{Corollary}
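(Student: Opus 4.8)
The plan is to verify that the data $(X,\omega)$ and $(L_p,h_p)=(L^p,h_p)$, with $h_p=h^{p-n_p}\otimes h_0^{n_p}$, satisfy assumptions (A)--(B), to identify the limiting current, and then to quote Theorems \ref{T:mt1} and \ref{T:a1}. For $p$ large enough that $n_p<p$ (which is all that matters for the asymptotics, by \eqref{n_p}), the metric $h_p$ is a genuine singular metric on $L^p=L^{p-n_p}\otimes L^{n_p}$, and its curvature current is
\begin{equation*}
c_1(L^p,h_p)=(p-n_p)\,c_1(L,h)+n_p\,c_1(L,h_0).
\end{equation*}
Since $c_1(L,h)\geq0$ and $c_1(L,h_0)\geq\varepsilon\omega$, and $0\le n_p\le p$, this gives $c_1(L^p,h_p)\geq n_p\,c_1(L,h_0)\geq n_p\varepsilon\,\omega$, so \eqref{e:pc} holds with $a_p=n_p\varepsilon\to\infty$. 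Both $c_1(L,h)$ and $c_1(L,h_0)$ lie in $\cT(X)$, their psh weights extending across $X_{\rm sing}$ by normality, hence so does $c_1(L^p,h_p)$.

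The key bookkeeping step is the computation of $A_p$ and of the domination current $T_0$. The two curvature currents represent the same Chern class: $c_1(L,h)-c_1(L,h_0)=dd^cg$ for a global function $g$ on $X$ (the difference of weights, globally defined since the transition factors cancel), so by Stokes $\int_Xc_1(L,h)\wedge\omega^{n-1}=\int_Xc_1(L,h_0)\wedge\omega^{n-1}=:c>0$. Consequently
\begin{equation*}
A_p=\int_Xc_1(L^p,h_p)\wedge\omega^{n-1}=(p-n_p)c+n_pc=pc.
\end{equation*}
For \eqref{e:domin0}, needed when $X_{\rm sing}\neq\emptyset$, I would use $(p-n_p)c_1(L,h)\leq p\,c_1(L,h)$ and $n_p\,c_1(L,h_0)\leq p\,c_1(L,h_0)$ to obtain $c_1(L^p,h_p)\leq p\,(c_1(L,h)+c_1(L,h_0))=A_p\,T_0$, where $T_0:=c^{-1}\big(c_1(L,h)+c_1(L,h_0)\big)\in\cT(X)$ as a positive combination of elements of $\cT(X)$.

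With (A)--(B) in hand, I would identify the limit. Because $n_p/p\to0$ by \eqref{n_p}, the normalized curvatures satisfy
\begin{equation*}
\frac{1}{A_p}\,c_1(L^p,h_p)=\frac{p-n_p}{pc}\,c_1(L,h)+\frac{n_p}{pc}\,c_1(L,h_0)\longrightarrow\frac1c\,c_1(L,h)=:T
\end{equation*}
weakly on $X$. Theorem \ref{T:mt1}(i) then gives $\frac1{A_p}\log P_p\to0$, i.e. $\frac1p\log P_p\to0$ in $L^1(X,\omega^n)$, and its ``moreover'' clause gives $\frac1{A_p}\gamma_p\to T$, i.e. $\frac1p\gamma_p\to c_1(L,h)$. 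For the zeros I note $\sum_pA_p^{-2}=c^{-2}\sum_pp^{-2}<\infty$, so Theorem \ref{T:a1} applies and yields $\frac1{A_p}[\sigma_p=0]\to T$ for $\lambda_\infty$-a.e.\ sequence, i.e. $\frac1p[\sigma_p=0]\to c_1(L,h)$. Each conclusion is obtained from the corresponding normalized statement by the elementary rescaling by the constant $c$.

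I expect the only genuinely delicate point to be the \emph{domination and mass bookkeeping in the singular setting}: confirming that $A_p$ is metric-independent (so $A_p=pc$ exactly, despite $h_p$ mixing $h$ and $h_0$) and that the explicit $T_0$ really lies in $\cT(X)$ and dominates $A_p^{-1}c_1(L^p,h_p)$ uniformly in $p$, as \eqref{e:domin0} demands. Everything else is a direct transcription of Theorems \ref{T:mt1} and \ref{T:a1}.
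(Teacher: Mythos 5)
Your proposal is correct and follows essentially the same route as the paper: verify \eqref{e:pc} with $a_p=\varepsilon n_p$, use the equality of masses $\|c_1(L,h)\|=\|c_1(L,h_0)\|$ to get $A_p=p\,\|c_1(L,h)\|$, dominate by $T_0=\|c_1(L,h)\|^{-1}\big(c_1(L,h)+c_1(L,h_0)\big)$ to verify \eqref{e:domin0}, and then invoke Theorems \ref{T:mt1} and \ref{T:a1}. The only difference is that you spell out details the paper leaves implicit (the Stokes argument for equality of masses, the identification of the limit current, and the summability $\sum_p A_p^{-2}<\infty$), which is harmless.
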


\begin{proof} Note that 
$$c_1(L^p,h_p)=(p-n_p)c_1(L,h)+n_pc_1(L,h_0)\geq
\varepsilon n_p\omega\,,\;\frac{A_p}{p}=\|c_1(L,h)\|
=\|c_1(L,h_0)\|>0\,.$$ 
Hence 
$$c_1(L^p,h_p)\leq p(c_1(L,h)+c_1(L,h_0))=A_pT_0\,,\;
\text{ with }\,T_0=
\frac{1}{\|c_1(L,h)\|}\,(c_1(L,h)+c_1(L,h_0))\,,$$
and Theorems \ref{T:mt1} and \ref{T:a1} apply in this setting 
and conclude the proof.
\end{proof}

Corollary \ref{C:Lp1} applies to the case of the Satake-Baily-Borel 
compactifications of arithmetic quotients. 
Let $D$ be a bounded symmetric domain in $\mathbb{C}^n$ and
let $\Gamma$ be a 
torsion-free arithmetic lattice. 
Then $U:=D/\Gamma$ is a smooth quasi-projective variety, called
an arithmetic variety.
The Satake-Borel-Baily compactification of $U$ is a normal compact 
analytic space $X$ 
containing $U$ as a Zariski open dense set
and which is minimal with this property,
that is, given any normal compactification $U\subset X'$, the 
identity map on $U$
extends to a holomorphic map $X'\to X$ (see Satake \cite{Sa60},
Borel-Baily \cite{BB66}).
Moreover, $\codim(X\setminus U)\geq 2$. 

The Bergman metric $\beta_D$ 
on $D$ descends to a complete K\"ahler metric $\beta_U$ 
on $U$, which is K\"ahler-Einstein with 
$\ric_{\beta_U}=-\beta_U$. 
We denote by $h^{K_U}$ the Hermitian metric induced by $\beta$
on $K_U$.
Then $c_1(K_U,h^{K_U})=-\ric_{\beta_U}=\beta_U$.

\begin{Lemma}\label{l:be}
Assume that $\Gamma$ is neat.
There exists an ample line bundle $F\to X$ such that $F|_U=K_U$
and a singular metric $h^F$ on $F$ such that $h^F|_U=h^{K_U}$, 
such that
$c_1(F,h^F)$ is a K\"ahler current on $X$ and 
$c_1(F,h^F)|_U=\beta_U$.
Hence the Bergman K\"ahler metric $\beta_U$ extends to
a K\"ahler current $\beta_X$
on $X$.
\end{Lemma}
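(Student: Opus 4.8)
The plan is to construct $F$ as the unique extension of $K_U$ across the Baily--Borel boundary, to extend the Bergman metric $h^{K_U}$ to a singular Hermitian metric $h^F$ on $F$, and to recognize its curvature as a Kähler current; one then sets $\beta_X:=c_1(F,h^F)$.

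First I would produce $F$. Since $\Gamma$ is neat, $U=D/\Gamma$ is smooth and $K_U$ is an honest line bundle. Because $X$ is normal and $\codim(X\setminus U)\geq2$, the restriction map on Picard groups is injective, so $K_U$ admits at most one extension to a line bundle on $X$; the Baily--Borel construction \cite{Sa60,BB66} provides such an extension $F$ with $F|_U=K_U$ and shows that the basic automorphic line bundle is ample on $X$. As $K_U$ is a positive tensor power of that bundle, $F$ is ample, and by Grauert's criterion (Proposition \ref{P:Grauert}) $X$ is projective and $F$ carries a smooth metric with Kähler curvature.

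Next I would extend the metric. On $U$ one has $c_1(K_U,h^{K_U})=\beta_U\geq0$, so in a local embedding chart near a boundary point, with a holomorphic frame $e_F$ of $F$, the weight $\varphi$ defined by $|e_F|^2_{h^{K_U}}=e^{-2\varphi}$ is psh on the $U$-part of the chart. Granting that $\varphi$ is bounded above near $X\setminus U$ (see below), the removable singularity theorem for psh functions extends $\varphi$ across the smooth boundary points, an analytic set of codimension $\geq2$, and \cite[Satz 4]{GR56} extends it across $X_{\mathrm{sing}}$ since $X$ is normal. Hence $h^{K_U}$ extends to a singular metric $h^F$ on $F$ with $h^F|_U=h^{K_U}$, and $\beta_X:=c_1(F,h^F)=dd^c\varphi\in\cT(X)$ satisfies $\beta_X|_U=\beta_U$. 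To see that $\beta_X$ is in fact Kähler, note that $\beta_U$ is complete and blows up along the boundary while the fixed Kähler form $\omega$ is bounded, so $\beta_U\geq\varepsilon\,\omega|_U$ on $U$ for some $\varepsilon>0$ (near the boundary by the growth of the invariant metric, on compacta by compactness). Writing $\beta_X-\varepsilon\omega=dd^c(\varphi-\varepsilon\eta)$ locally with $\eta$ a smooth potential of $\omega$, the function $\varphi-\varepsilon\eta$ is psh and bounded above on $U$, hence extends psh across $X\setminus U$ as before, giving $\beta_X\geq\varepsilon\omega$ on $X$.

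The main obstacle is the single analytic input used twice above: the boundedness from above of the weight $\varphi$ of the Bergman metric near the boundary. Equivalently, the Petersson norm of a local frame of $F$ must not decay faster than a sub-logarithmic rate, so that no divisorial $-\log$ pole appears in $\varphi$. This is exactly where the hypothesis $\codim(X\setminus U)\geq2$ is indispensable: in the codimension-one situation (e.g.\ the modular curve) the weight acquires a genuine $-\log$ singularity, so $c_1(K_U,h^{K_U})$ extends with negative mass along the boundary and one is forced to twist by the boundary divisor; when the boundary has codimension $\geq2$ no such twist exists, and the required growth estimate — furnished by the asymptotic analysis of the invariant metric in the sense of Mumford \cite{AMRT:10} — delivers the bound directly. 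The remaining steps (uniqueness of the extension $F$, the two psh extensions, and the removable-singularity comparison) are routine.
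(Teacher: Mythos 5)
The decisive gap is in the K\"ahler current step, which is the real content of the lemma. You deduce $\beta_U\geq\varepsilon\,\omega|_U$ near the boundary from ``$\beta_U$ is complete and blows up along the boundary while $\omega$ is bounded.'' This is a non sequitur: blow-up of a metric gives no lower bound valid in \emph{all} tangent directions --- the form $|z_1|^{-2}\,i\,dz_1\wedge d\bar z_1+|z_1|^{2}\,i\,dz_2\wedge d\bar z_2$ blows up as $z_1\to0$ yet dominates no fixed Hermitian form. What is needed is a uniform lower bound on $\beta_U$ in every direction near $X\setminus U$, and that is precisely the nontrivial asymptotic input. The paper obtains it by passing to a smooth toroidal compactification $\wi X$ of $U$ \cite{AMRT:10}, where the boundary $\Sigma=\wi X\setminus U$ is a normal crossings divisor: by \cite[Proposition\,3.4]{Mum77}, $h^{K_U}$ is a good metric and $\beta_U$ has Poincar\'e growth near $\Sigma$, so the curvature current of the induced metric on $L=K_{\wi X}\otimes\cO_{\wi X}(\Sigma)$ dominates a K\"ahler form on $\wi X$; one then pushes forward by the map $\tau:\wi X\to X$, using $L=\tau^*F$ and $\omega_{\wi X}\geq c\,\tau^*\omega$, to get that $c_1(F,h^F)=\tau_*c_1(L,h^L)$ is a K\"ahler current on $X$. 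In your write-up the phrase ``by the growth of the invariant metric'' is exactly the assertion requiring proof; the known asymptotics are formulated on the toroidal compactification, and transporting them to a bound against $\omega$ on the Baily--Borel space is the pushforward comparison above, which you never make. (Granting the inequality $\beta_U\geq\varepsilon\omega|_U$, your local-potential argument --- extend $\varphi-\varepsilon\eta$ psh across $X\setminus U$ and conclude $\beta_X\geq\varepsilon\omega$ --- is correct and a reasonable alternative to the paper's pushforward.)

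There is also a conceptual misdiagnosis: the ``single analytic input'' you isolate, boundedness above of the weight near $X\setminus U$, is not an input at all. Psh functions extend across analytic sets of complex codimension $\geq2$ with \emph{no} boundedness hypothesis --- the local upper bound is automatic in codimension $\geq2$ (sub-mean value inequality on discs transverse to the set, whose boundaries stay away from it), and \cite[Satz\,4]{GR56} handles the points of $X_{\rm sing}$ by normality. This is why the paper's metric-extension step is one line: the local weights of $h^{K_U}$ are psh on the $U$-part of a chart, hence extend. Your codimension dichotomy (modular curve versus Baily--Borel) is right in spirit, but the mechanism is not ``codimension $\geq2$ furnishes the growth bound via Mumford'': it is that in codimension $\geq2$ no growth estimate is needed at all, whereas in codimension $1$ the weight genuinely fails to be bounded above and one must twist by the boundary divisor. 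So your proposal spends its precise citations on a step that is automatic, while the genuinely hard step --- the uniform lower bound $\beta_U\geq\varepsilon\omega$ --- is left to an insufficient ``blow-up'' argument; as it stands, the proof of the K\"ahler current property is missing.
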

\begin{proof}
The existence of an ample line bundle $F\to X$ such that
$F|_U=K_U$ is shown in 
\cite[Proposition\,3.4(b)]{Mum77}; the bundle $F$ is the bundle
used in \cite[Theorem\,10.11]{BB66} to embed $X$ into 
a projective space. Since $c_1(K_U,h^{K_U})=\beta_U$, 
the local weights of $h^{K_U}$ are
psh, so they extend to $X$. Thus, the metric $h^{K_U}$ extends 
to a metric $h^F$ on $F\to X$ and $c_1(K_U,h^{K_U})$ extends 
to a positive closed current $c_1(F,h^{F})$ on $X$. 

To see that $c_1(F,h^{F})$ is a K\"ahler current, one can describe 
$F$ in the following way. By \cite{AMRT:10}, $U$ admits a smooth 
toroidal compactification $\wi{X}$. In particular, 
$\Sigma:=\wi{X}\setminus U$ 
is a divisor with normal crossings. Let 
$L:=K_{\wi{X}}\otimes\cO_{\wi{X}}(\Sigma)$.
Then the metric $h^{K_U}$ defines a singular metric $h^L$ on $L$
such that $c_1(L,h^L)$ is a 
closed positive current on $\wi{X}$ which extends 
$\beta_U/(2\pi)$, cf.\ \cite[Lemma\,6.8]{CM11}.
It follows from \cite[Proposition\,3.4]{Mum77} 
(see also \cite[Section 6.4]{CM11}) 
that $h^{K_U}$ is a good metric in the sense of Mumford, 
so that $\beta_U$ has Poincar\'e growth near $\Sigma$. 
Therefore, $c_1(L,h^L)$ is a K\"ahler current on $\wi{X}$.
If $\tau:\wi{X}\to X$ is the unique holomorphic map extending 
the identity on $U$, we have $L=\tau^*F$. Hence 
$c_1(F,h^F)=\tau_*c_1(L,h^L)$, so $c_1(F,h^F)$
is a K\"ahler current on $X$.
\end{proof}
\begin{Corollary}\label{C:Lp1.1}
Let $X$ be the Satake-Baily-Borel compactification of a smooth
arithmetic variety $U=D/\Gamma$, where $\Gamma$ is neat.
Let $(F,h^F)$ be the extension of $(K^U,h^{K_U})$ given by 
Lemma \ref{l:be}, where $h^{K_U}$ is induced by the Bergman 
metric $\beta_U$.
Then the conclusions of Corollary \ref{C:Lp1} hold for 
$c_1(F,h^F)=\beta_X$.
\end{Corollary}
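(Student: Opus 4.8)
The plan is to verify that the pair $(X,\omega)$ and the line bundle $(F,h^F)$ fulfill the hypotheses of Corollary \ref{C:Lp1} with $(L,h)=(F,h^F)$, and then to invoke that corollary directly. Essentially all of the analytic content has already been established in Lemma \ref{l:be}; what remains is to extract from it the two requirements of assumptions (A)--(B), namely that $X$ is a compact normal K\"ahler space and that the curvature current $c_1(F,h^F)$ strictly dominates a K\"ahler form.

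First I would recall that the Satake-Baily-Borel compactification $X$ is by construction a compact normal analytic space, with $\codim(X\setminus U)\geq 2$. By Lemma \ref{l:be} the line bundle $F$ is ample, so $X$ is projective; in particular $X$ carries a K\"ahler form $\omega$, for instance the pull-back of the Fubini-Study form under a projective embedding furnished by sections of a high power of $F$. Thus $X$ is a compact normal K\"ahler space in the sense of assumption (A).

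Next, Lemma \ref{l:be} asserts that $c_1(F,h^F)=\beta_X$ is a K\"ahler current on $X$. Since $X$ is compact, any two Hermitian forms on $X$ are comparable, so the K\"ahler current property yields a constant $\varepsilon>0$ with $c_1(F,h^F)\geq\varepsilon\omega$ for the chosen K\"ahler form $\omega$. This is exactly the strict positivity hypothesis of Corollary \ref{C:Lp1} applied to the singular Hermitian line bundle $(F,h^F)$.

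Having checked both hypotheses, I would apply Corollary \ref{C:Lp1} to $(L,h)=(F,h^F)$. With $A_p=p\,\|c_1(F,h^F)\|$, the conclusions read $\frac1p\log P_p\to0$ in $L^1(X,\omega^n)$, $\frac1p\gamma_p\to\beta_X$ weakly, and $\frac1p[\sigma_p=0]\to\beta_X$ weakly on $X$ for $\lambda_\infty$-a.e.\ sequence $\{\sigma_p\}_{p\geq1}\in{\mathcal S}_\infty$, as asserted. I do not expect any genuine obstacle: the delicate part --- extending the metric $h^{K_U}$ across the boundary $X\setminus U$ and proving that the resulting curvature is a K\"ahler current, via the good-metric/Poincar\'e-growth analysis on the toroidal compactification --- is precisely the content of Lemma \ref{l:be}, and the present corollary is a formal consequence once that lemma is in hand. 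The only point meriting care is the translation of ``K\"ahler current'' into the quantitative domination $c_1(F,h^F)\geq\varepsilon\omega$ that opens the door to Corollary \ref{C:Lp1}.
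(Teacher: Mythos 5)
Your proposal is correct and matches the paper's treatment: the paper gives no separate proof of Corollary \ref{C:Lp1.1}, regarding it as an immediate consequence of Lemma \ref{l:be} (which supplies the K\"ahler current $c_1(F,h^F)=\beta_X$ on the compact normal projective space $X$) together with Corollary \ref{C:Lp1}. Your verification of hypotheses (A)--(B) --- normality and projectivity of the Satake-Baily-Borel compactification, and the quantitative bound $c_1(F,h^F)\geq\varepsilon\omega$ extracted from the K\"ahler current property by compactness --- is exactly the intended argument.
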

Note that $H^0_{(2)}(X,F^p)=H^0_{(2)}(U,K_U^p)=
H^0(\wi{X},K_{\wi{X}}^p\otimes\cO_{\wi{X}}(\Sigma)^{p-1})=
H^0(\wi{X},L^p\otimes\cO_{\wi{X}}(\Sigma)^{-1})$
is the space of cusp forms of weight $p$ , 
cf.\ \cite[Lemma\,6.11]{CM11}. 
Hence Corollary \ref{C:Lp1.1} says that the Bergman kernel 
$P_p$ of the space of cusps forms satisfies 
$\frac{1}{p}\,\log P_p\to 0$ 
in $L^1(X,\omega^n)$, where $\omega$ is any smooth 
K\"ahler form on $X$. Moreover, the normalized zero-currents of 
random cusp forms distribute on $X$ to the extension $\beta_X$ 
of the Bergman metric $\beta_U$ on $U$. 

\smallskip

We consider next a more general situation as above. 
Let $X$ be a projective variety of 
general type with only canonical
singularities such that the canonical divisor $\mathcal{K}_X$ is 
an ample $\mathbb{Q}$-divisor. 
There exists an integer $\ell>0$ such that $\mathcal{K}_X^\ell$ 
is integral and the holomorphic line bundle 
$L=\cO_X(\mathcal{K}_X^\ell)\to X$ admits a smooth Hermitian 
metric $h^L$ such that
$c_1(L,h^L)=\omega$ is a smooth K\"ahler form on $X$.
By \cite[Theorem\,7.8]{EGZ09} there exists a unique 
$\varphi\in PSH(X,\omega)\cap L^\infty(X)$
such that 
\begin{equation}\label{KEs}
c_1(L,e^{-2\varphi}h^L)=\omega+dd^c\varphi
=\omega_\varphi
\end{equation}
 is a singular 
K\"ahler-Einstein metric in the sense of \cite[Definition\,7.3]{EGZ09}. 
In particular, $\omega_\varphi$ is a closed positive current 
on $X$ and its restriction to $X_{\rm reg}$ is a smooth 
K\"ahler-Einstein metric of negative curvature.
Corollary \ref{C:Lp2} yields the following.
\begin{Corollary}\label{C:Lp1.2}
Let $X$ be a projective variety of general type with only canonical 
singularities and let $\omega_\varphi$ be the singular 
K\"ahler-Einstein metric \eqref{KEs}.
Let $h_p$ be the metric on $L^p$ 
constructed as in \eqref{h_p} using a sequence $\{n_p\}_{p\geq1}$ as in \eqref{n_p}
and the metrics $h_0=h^L$ 
and $h=e^{-2\varphi}h^L$ on $L$. 
Let $\gamma_p\,$, ${\mathcal S}^p$ be the Fubini-Study current 
and the unit sphere associated 
to the space $H^0_{(2)}(X,L^p,h_p)$. Then 
$\frac{1}{p}\gamma_p\to\omega_\varphi$\,,
$\frac{1}{p}\big[\sigma_p=0\big]\to\omega_\varphi$
as $p\to\infty$ weakly on $X$, for $\lambda_\infty$-a.e.\ sequence 
$\{\sigma_p\}_{p\geq1}\in{\mathcal S}_\infty$. 
\end{Corollary}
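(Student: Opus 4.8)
The plan is to recognize Corollary~\ref{C:Lp1.2} as a direct instance of Corollary~\ref{C:Lp2}: the whole argument should reduce to checking that the K\"ahler-Einstein data $(L,h,h_0)$ meets the hypotheses of that corollary, after which its conclusion applies verbatim. First I would dispose of the base-space requirements. Since $X$ has only canonical singularities it is in particular normal, and by hypothesis $L=\cO_X(\mathcal{K}_X^\ell)$ carries a smooth metric $h^L$ with $c_1(L,h^L)=\omega$ a smooth K\"ahler form; hence $(X,\omega)$ is a compact normal K\"ahler space in the sense of Section~\ref{S:prelim}, as required.

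Next I would pin down the three metric ingredients of Corollary~\ref{C:Lp2}. For the positive reference metric take $h_0:=h^L$, so that $c_1(L,h_0)=\omega\geq\varepsilon\omega$ with $\varepsilon=1$. For the metric whose curvature is the target current take $h:=e^{-2\varphi}h^L$; since \cite{EGZ09} provides $\varphi\in PSH(X,\omega)\cap L^\infty(X)$, the local weight of $h$ differs from the smooth weight of $h^L$ by the bounded function $\varphi$, so $h$ is an admissible singular Hermitian metric, and its curvature is exactly $c_1(L,h)=\omega+dd^c\varphi=\omega_\varphi\geq0$, the K\"ahler-Einstein current of \eqref{KEs}. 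With $\{n_p\}$ chosen as in \eqref{n_p} and $h_p$ defined by \eqref{h_p}, all hypotheses of Corollary~\ref{C:Lp2} hold, and I would simply invoke it to get $\frac{1}{p}\gamma_p\to c_1(L,h)=\omega_\varphi$ and $\frac{1}{p}[\sigma_p=0]\to\omega_\varphi$ weakly for $\lambda_\infty$-a.e.\ sequence $\{\sigma_p\}_{p\geq1}\in{\mathcal S}_\infty$. I would also record that the almost-sure zero statement passes, through Theorem~\ref{T:a1}, the summability test $\sum_pA_p^{-2}<\infty$: here $A_p=p\,\|c_1(L,h)\|=p\int_X\omega^n$ grows linearly, because $\omega_\varphi$ and $\omega$ are cohomologous, so the series converges trivially.

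The only point demanding genuine care -- and hence the main, if minor, obstacle -- is confirming that the solution furnished by \cite{EGZ09} supplies precisely the two analytic facts this reduction needs: that $\omega_\varphi$ is an honest positive closed current on all of $X$, so that $c_1(L,h)\geq0$ in the sense of currents, and that $\varphi$ is globally bounded, so that $e^{-2\varphi}h^L$ has locally integrable weights and therefore qualifies as a singular Hermitian metric. Both are part of the cited existence and regularity statement, so beyond extracting and recording them no further work is required.
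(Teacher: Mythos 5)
Your proposal is correct and coincides with the paper's own argument: the paper proves Corollary \ref{C:Lp1.2} precisely by observing that $(X,\omega)$ is a compact normal K\"ahler space, that $h_0=h^L$ satisfies $c_1(L,h_0)=\omega$ and $h=e^{-2\varphi}h^L$ satisfies $c_1(L,h)=\omega_\varphi\geq0$ with $\varphi$ bounded $\omega$-psh (by \cite[Theorem\,7.8]{EGZ09}), and then invoking Corollary \ref{C:Lp2}. Your additional remarks (normality from canonical singularities, $A_p=p\int_X\omega^n$ and the summability $\sum_p A_p^{-2}<\infty$) are accurate but are already internal to the proof of Corollary \ref{C:Lp2}, so nothing further is needed.
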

A similar discussion applies to canonically polarized klt compact 
K\"ahler pairs $(X,\Delta)$ considered in 
\cite[Theorem\,7.12]{EGZ09} or to $\mathbb{Q}$-Fano K\"ahler 
spaces (that is, klt compact K\"ahler spaces with $-\mathcal{K}_X$ 
ample $\mathbb{Q}$-divisor).

\subsection{Powers of ample line bundles}\label{SS:appl2} 
We specialize in the sequel the results of the previous corollary
to the case when $(X,\omega)$ is a projective manifold with 
a polarization $(L,h_0)$, where $L$ is a positive line bundle on $X$
endowed with a smooth Hermitian metric $h_0$ such that 
$c_1(L,h_0)=\omega$. The set of singular Hermitian metrics 
$h$ on $L$ with $c_1(L,h)\geq0$ is in one-to-one correspondence 
to the set $PSH(X,\omega)$ of $\omega$-plurisubharmonic 
($\omega$-psh) functions on $X$, by associating to 
$\varphi\in PSH(X,\omega)$ the metric 
$h_\varphi=e^{-2\varphi}h_0$ (see e.g., \cite{D90,GZ05}). 
Note that 
$$c_1(L,h_\varphi)=\omega+dd^c\varphi=:\omega_\varphi\,.$$

\begin{Corollary}\label{C:Lp3}
Let $(X,\omega)$ be a compact K\"ahler manifold and $(L,h_0)$ 
be a positive line bundle on $X$ 
with $c_1(L,h_0)=\omega$. Let $\varphi\in PSH(X,\omega)$ and 
$h_p$ be a metric on $L^p$ 
constructed as in \eqref{h_p} using a sequence $\{n_p\}_{p\geq1}$ as in \eqref{n_p}
and the metrics $h_0$ and $h=h_\varphi$ on $L$. 
Let $\gamma_p\,$, ${\mathcal S}^p$ be the Fubini-Study 
current and the unit sphere associated 
to the space $H^0_{(2)}(X,L^p,h_p)$. Then 
$\frac{1}{p}\gamma_p\to\omega_\varphi$\,,
$\frac{1}{p}\big[\sigma_p=0\big]\to\omega_\varphi$ 
as $p\to\infty$ weakly on $X$, 
for $\lambda_\infty$-a.e.\ sequence 
$\{\sigma_p\}_{p\geq1}\in{\mathcal S}_\infty$. Moreover,
if $\varphi$ is continuous then 
$$\frac{1}{p^k}\,\gamma_p^k\to\omega_\varphi^k=
(\omega+dd^c\varphi)^k\;\text{ weakly on }X,\,
\text{ for }k\leq n\,.$$
\end{Corollary}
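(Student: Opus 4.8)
The first two convergences in the statement are exactly the content of Corollary~\ref{C:Lp2} applied with $h=h_\varphi$ and the given $h_0$, so I only address the ``Moreover'' part, the convergence $\frac{1}{p^k}\gamma_p^k\to\omega_\varphi^k$ for continuous $\varphi$. The plan is to reduce it, on an arbitrary coordinate chart, to the \emph{uniform} convergence of the local potentials of $\frac1p\gamma_p$ to a continuous potential of $\omega_\varphi$, and then to invoke the continuity of the mixed Monge--Amp\`ere operator under uniform convergence of locally bounded psh functions (Bedford--Taylor). Fix a contractible coordinate chart $U\subset X$ on which $L$ is trivial; let $\rho_0$ be a smooth weight of $h_0$ (so $dd^c\rho_0=\omega$) and $\rho=\rho_0+\varphi$ the corresponding weight of $h_\varphi$ (so $dd^c\rho=\omega_\varphi$), which is continuous. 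If $\psi_p$ is the weight of $h_p$ on $U$, then by \eqref{e:BFS2} the function $\phi_p:=\frac1p\psi_p+\frac{1}{2p}\log P_p$ is a psh potential of $\frac1p\gamma_p$ on $U$, and it is continuous since $P_p$ is continuous and positive. It therefore suffices to prove $\phi_p\to\rho$ uniformly on compact subsets of $U$.

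First I would treat the weight term. From \eqref{h_p} we have $h_p=h_\varphi^{\,p-n_p}\otimes h_0^{\,n_p}$, hence $\psi_p=(p-n_p)\rho+n_p\rho_0=p\rho_0+(p-n_p)\varphi$ and $\frac1p\psi_p=\rho-\frac{n_p}{p}\varphi$. Since $\varphi$ is continuous, hence bounded on the compact space $X$, and $n_p/p\to0$ by \eqref{n_p}, it follows that $\frac1p\psi_p\to\rho$ uniformly on $X$. Thus everything reduces to showing $\frac1p\log P_p\to0$ uniformly on $X$. This is the heart of the matter: Theorem~\ref{T:mt1} only provides $L^1$ convergence of $\frac1p\log P_p$, and the main obstacle is to upgrade this to uniform convergence, which is precisely where the continuity of $\varphi$ must be used.

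For the uniform convergence of $\frac1p\log P_p$ I would argue by two-sided bounds. For the upper bound I would use the sub-mean value estimate giving the upper bound in \eqref{e:Bkf} (valid for any metric): for $z\in U$ and small $r>0$,
$$\frac1p\log P_p(z)\le\frac{\log(Cr^{-2n})}{p}+\frac2p\Big(\max_{B(z,r)}\psi_p-\psi_p(z)\Big).$$
Since $\frac1p\psi_p=\rho-\frac{n_p}{p}\varphi$ with $\rho$ uniformly continuous on $X$ and $n_p/p\to0$, letting $p\to\infty$ yields $\limsup_p\sup_X\frac1p\log P_p\le 2\,\omega_\rho(r)$, where $\omega_\rho$ denotes the modulus of continuity of $\rho$, and then letting $r\to0$ gives $\limsup_p\sup_X\frac1p\log P_p\le0$. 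For the lower bound I would run the peak-section construction of Step~1 in the proof of Theorem~\ref{T:mt1}: since $X$ is smooth and $c_1(L^p,h_p)\ge\varepsilon n_p\,\omega$ with $n_p\to\infty$, for $p$ large and every $z\in X$ the Ohsawa--Takegoshi extension \cite{OT87} followed by solving $\db$ via Theorem~\ref{T:db} produces a section $S_{z,p}\in H^0_{(2)}(X,L^p)$ with $S_{z,p}(z)\ne0$ and $\|S_{z,p}\|_p^2\le C''|S_{z,p}(z)|_{h_p}^2$, where $C''$ is uniform in $z$; hence $P_p\ge 1/C''$ on $X$ and $\frac1p\log P_p\ge-\frac{\log C''}{p}\to0$ uniformly. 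The hypothesis that $\varphi$ is continuous enters twice here: it makes $\rho$ uniformly continuous, so the upper bound is uniform in $z$, and it makes $\psi_p$ finite everywhere, so a peak section exists at every point.

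Finally, combining the two steps gives $\phi_p\to\rho$ uniformly on compact subsets of $U$; moreover, since $P_p\ge1/C''$ and $\psi_p$ is continuous, the $\phi_p$ form a locally uniformly bounded family of continuous psh functions. By the Bedford--Taylor continuity theorem for the mixed Monge--Amp\`ere operator under uniform convergence of locally bounded psh functions, $(dd^c\phi_p)^k\to(dd^c\rho)^k$ weakly on $U$ for every $k\le n$, that is, $\frac{1}{p^k}\gamma_p^k\to\omega_\varphi^k$ weakly on $U$; both sides are well defined in the Bedford--Taylor sense precisely because $\varphi$ and the $\phi_p$ are continuous. A standard partition-of-unity argument then globalizes this to $\frac{1}{p^k}\gamma_p^k\to\omega_\varphi^k$ weakly on $X$, completing the proof.
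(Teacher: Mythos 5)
Your proof is correct and takes essentially the same route as the paper: the paper obtains the first two convergences from Corollary \ref{C:Lp2} and, for the ``Moreover'' part, simply cites \cite[Theorems 5.3 and 5.4]{CM11}, whose content is precisely your argument --- uniform convergence of $\frac1p\log P_p$ via the sub-mean-value upper bound (using uniform continuity of the weights) and the Ohsawa--Takegoshi peak-section lower bound $P_p\geq 1/C''$, followed by Bedford--Taylor continuity of the Monge--Amp\`ere operator under locally uniform convergence of continuous psh potentials. The only difference is cosmetic: you spell out the adaptation to the mixed weight $p\rho_0+(p-n_p)\varphi$ and the $n_p/p\to0$ correction, which the paper leaves implicit in ``proceed as in the proof of \cite[Theorem 5.3]{CM11}.''
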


\begin{proof} The first conclusion follows directly from 
    Corollary \ref{C:Lp2}. If $\varphi$ 
is continuous and $P_p$ is the Bergman kernel function of 
the space $H^0_{(2)}(X,L^p,h_p)$ 
one can proceed as in the proof of \cite[Theorem 5.3]{CM11} 
to show that $\frac1p\,\log P_p\to0$ 
uniformly on $X$. This implies the second conclusion of 
the corollary, as in \cite[Theorem 5.4]{CM11}. 
\end{proof}

\par Note that since $h_0$ is smooth we have that 
$H^0_{(2)}(X,L^p,h^p)\subset H^0_{(2)}(X,L^p,h_p)$. 
Moreover, if the metric $h=h_\varphi$ is bounded
(i.e.,\ $\varphi$ is bounded) then equality holds, 
$H^0_{(2)}(X,L^p,h^p)=H^0_{(2)}(X,L^p,h_p)=H^0(X,L^p)$. 

\par We remark now that, instead of working with random 
sections of spheres, one can identify $H^0_{(2)}(X,L_p)$ to 
${\mathbb C}^{d_p}$ by 
$$a=(a_1,\dots,a_{d_p})\in{\mathbb C}^{d_p}\longmapsto S_a
=\sum_{j=1}^{d_p}a_jS_j^p\in H^0_{(2)}(X,L_p)\,,$$
and one can consider $a_j$, $1\leq j\leq d_p$, as independent 
identically distributed Gaussian random variables on $\mathbb C$.
Thus the probability space $({\mathcal S}^p,\lambda_p)$
is replaced by $(H^0_{(2)}(X,L_p),\mu_p)$, where
$$d\mu_p(z)=\pi^{-d_p}e^{-(|z_1|^2
+\ldots+|z_{d_p}|^2)}dm(z)\,,$$
and $dm(z)$ is the Lebesgue measure on ${\mathbb C}^{d_p}$. 
Let $\mu_\infty=\prod_{p=1}^\infty\mu_p$ be the product 
measure on the space 
${\mathcal H}=\prod_{p=1}^\infty H^0_{(2)}(X,L_p)$. 
Since the measure $\mu_p$ is unitary invariant, Theorems \ref{T:a1} and \ref{T:a2} hold in this context with similar proofs. More precisely, we have the following:

\begin{Theorem}\label{T:a3}
Let $(X,\omega)$, $(L_p,h_p)$, $p\geq1$, verify assumptions (A)-(B). 

(i) If $\sum_{p=1}^\infty \frac{1}{A_p^{2}}<\infty$ then for $\mu_\infty$-a.e. sequence $\{\sigma_p\}_{p\geq1}\in{\mathcal H}$ we have that 
\begin{equation*}
\lim_{p\to\infty}\,\frac{1}{A_p}
\big([\sigma_p=0]-c_1(L_p,h_p)\big)=0,\text{ in the weak sense of currents on } X.
\end{equation*}

(ii) If condition \eqref{e:logdp} holds then there exists an increasing sequence of natural numbers 
$\{p_j\}_{j\geq1}$ such that for $\mu_\infty$-a.e. sequence 
$\{\sigma_p\}_{p\geq1}\in{\mathcal H}$ we have 
\begin{equation*}
\lim_{j\to\infty}\frac{\log|\sigma_{p_j}|_{h_{p_j}}}{A_{p_j}}=0\;\,
\text{ in $L^1(X,\omega^n)$}.
\end{equation*}
\end{Theorem}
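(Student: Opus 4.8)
The plan is to reduce both assertions to their spherical counterparts, Theorems \ref{T:a1} and \ref{T:a2}, using the unitary invariance of the Gaussian measure. The key preliminary observation is that the radial projection $z\mapsto z/|z|$, defined $\mu_p$-a.e.\ on $H^0_{(2)}(X,L_p)\cong\mathbb{C}^{d_p}$, pushes $\mu_p$ forward to $\lambda_p$: its pushforward is a $U(d_p)$-invariant probability measure on ${\mathbf S}^{2d_p-1}$ and hence equals the normalized surface measure. Consequently the coordinatewise projection $\Pi\colon{\mathcal H}\to{\mathcal S}_\infty$, $\Pi((\sigma_p)_p)=(\sigma_p/\|\sigma_p\|_p)_p$, satisfies $\Pi_*\mu_\infty=\lambda_\infty$. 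For part (i) this already suffices: the current $[\sigma_p=0]$ depends only on the line $\mathbb{C}\sigma_p$, so $[\sigma_p=0]=[\Pi(\sigma)_p=0]$, and the event that $\frac1{A_p}([\sigma_p=0]-c_1(L_p,h_p))\to0$ weakly is the $\Pi$-preimage of the corresponding event on ${\mathcal S}_\infty$. Under $\sum_pA_p^{-2}<\infty$, Theorem \ref{T:a1} gives the latter full $\lambda_\infty$-measure, so its preimage has full $\mu_\infty$-measure.

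For part (ii) I would split $\sigma_p=r_p\zeta_p$ with $r_p=\|\sigma_p\|_p$ and $\zeta_p=\sigma_p/r_p\in{\mathcal S}^p$, so that pointwise on $X$
\[
\frac{\log|\sigma_{p}|_{h_{p}}}{A_{p}}=\frac{\log r_{p}}{A_{p}}+\frac{\log|\zeta_{p}|_{h_{p}}}{A_{p}}\,,
\]
where the first summand is constant in $x$. The angular term is handled by Theorem \ref{T:a2}: since $\Pi_*\mu_\infty=\lambda_\infty$, for $\mu_\infty$-a.e.\ $\sigma$ the angular part $(\zeta_p)_p$ lies in the full $\lambda_\infty$-measure set on which $\frac1{A_{p_j}}\log|\zeta_{p_j}|_{h_{p_j}}\to0$ in $L^1(X,\omega^n)$ along the subsequence furnished by that theorem. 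The remaining, genuinely new, point is to control the radial term $\frac{\log r_{p_j}}{A_{p_j}}$, and this is where I expect the main difficulty to lie.

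To treat the radial term, note that $r_p^2=\sum_{l=1}^{d_p}|a_l|^2$ is a sum of $d_p$ independent $\mathrm{Exp}(1)$ variables, whence $\mathbb{E}[r_p^2]=d_p$ and $\mu_\infty(r_p^2\le t)\le\mu_\infty(|a_1|^2\le t)\le t$. Following the construction in the proof of Theorem \ref{T:a2}, the hypothesis $\liminf_p\frac{\log d_p}{A_p}=0$ together with $A_p\to\infty$ allows an inductive choice of the subsequence $\{p_j\}$ so that simultaneously $\sum_j\frac{\log d_{p_j}}{A_{p_j}}<\infty$ (needed to invoke Theorem \ref{T:a2} for the angular part) and $A_{p_j}\ge j$. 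For each fixed $\varepsilon>0$, Markov's inequality then yields
\[
\mu_\infty\big(r_{p_j}^2\ge e^{2\varepsilon A_{p_j}}\big)\le d_{p_j}e^{-2\varepsilon A_{p_j}}\,,\qquad \mu_\infty\big(r_{p_j}^2\le e^{-2\varepsilon A_{p_j}}\big)\le e^{-2\varepsilon A_{p_j}}\,,
\]
and both bounds are summable in $j$, since $d_{p_j}=e^{o(A_{p_j})}$ and $A_{p_j}\ge j$. By the Borel--Cantelli lemma, for $\mu_\infty$-a.e.\ $\sigma$ one has $\big|\frac{\log r_{p_j}}{A_{p_j}}\big|\le\varepsilon$ for all large $j$; intersecting the resulting full-measure sets over $\varepsilon=1/m$, $m\in\mathbb{N}$, gives $\frac{\log r_{p_j}}{A_{p_j}}\to0$ $\mu_\infty$-a.s. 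Adding the two terms shows $\frac1{A_{p_j}}\log|\sigma_{p_j}|_{h_{p_j}}\to0$ in $L^1(X,\omega^n)$ for $\mu_\infty$-a.e.\ $\sigma$, which is (ii). The only subtlety is to verify that the thinning of $\{p_j\}$ needed to make both radial tails summable remains compatible with the summability $\sum_j\frac{\log d_{p_j}}{A_{p_j}}<\infty$; the inductive selection arranges both at once.
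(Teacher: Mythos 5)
Your proof is correct, but it takes a genuinely different route from the paper. The paper's entire proof is one sentence: since $\mu_p$ is unitarily invariant, Theorems \ref{T:a1} and \ref{T:a2} ``hold in this context with similar proofs,'' i.e.\ the intended argument is to re-run those proofs with $\lambda_p$ replaced by $\mu_p$ --- for (i) the Shiffman--Zelditch variance estimates hold verbatim for the Gaussian ensemble, and for (ii) the spherical average $-{\mathbf I}(d_p)$ of $\log\bigl(|\sigma_p(x)|_{h_p}/\sqrt{P_p(x)}\bigr)$ is replaced by the dimension-independent constant $\int_{\C}\log|z|\,\pi^{-1}e^{-|z|^2}dm(z)$, so Lemma \ref{L:Ik} is not even needed. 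You instead use Theorems \ref{T:a1} and \ref{T:a2} as black boxes and reduce to them via the factorization $\sigma_p=r_p\zeta_p$ and the pushforward identity $\Pi_*\mu_\infty=\lambda_\infty$: part (i) is then immediate from the scale invariance of $[\sigma_p=0]$, while part (ii) needs your extra Borel--Cantelli control of $\log r_{p_j}/A_{p_j}$, whose details check out (the tail bounds $\mu_\infty(r_p^2\geq e^{2\varepsilon A_p})\leq d_pe^{-2\varepsilon A_p}$ and $\mu_\infty(r_p^2\leq e^{-2\varepsilon A_p})\leq e^{-2\varepsilon A_p}$ are right, $A_p\geq a_p\int_X\omega^n\to\infty$ makes the thinning $A_{p_j}\geq j$ possible, and both the summability $\sum_j\log d_{p_j}/A_{p_j}<\infty$ and the a.s.\ angular convergence survive passing to the thinned subsequence). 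What your reduction buys is that no Shiffman--Zelditch-type computation has to be repeated and the Gaussian statement is derived literally from the spherical one; what it costs is the radial tail analysis. Notably, that analysis is not entirely avoidable in the paper's route either: the proof of Theorem \ref{T:a2} uses crucially that $Y_p\leq0$, which rests on $|\sigma_p(x)|_{h_p}\leq\sqrt{P_p(x)}$ for \emph{unit} vectors; for Gaussian $\sigma_p$ one only has $|\sigma_p(x)|_{h_p}\leq\|\sigma_p\|_p\sqrt{P_p(x)}$, so the ``similar proof'' must in any case control $\log\|\sigma_p\|_p/A_p$ --- exactly your radial term. In that sense your write-up makes explicit a point the paper leaves implicit.
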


Hence Corollary \ref{C:Lp3} can be seen as a generalization of 
Theorem 5.2 in \cite{BL13} which deals with the special case 
when $\varphi={\mathcal V}_{K,q}^*$ is the weighted 
$\omega$-psh global extremal function of a compact $K\subset X$.
Using different methods and a different inner product on 
$H^0(X,L^p)$ (note that $\mathcal{V}_{K,q}^*$ is bounded) it
is shown in \cite[Theorem\,5.2]{BL13} that 
\[\frac1p\big[\sigma_p=0\big]\to
\omega+dd^c\mathcal{V}_{K,q}^*\:\:\text{for $\mu_\infty$-a.\,e. 
sequences $\{\sigma_p\}_{p\geq1}\in
\prod_{p=1}^{\infty}H^0(X,L^p)$}.\]
On the other hand \cite[Theorem\,5.2]{BL13}  holds for more 
general probability measures than 
$\mu_\infty$ (see \cite[(2.1) and (2.2)]{BL13}).

\begin{Remark} 
A particularly interesting case is when $X=\mathbb{P}^n$, 
$L=\mathcal{O}(1)$ and $\omega=\omega_{_\mathrm{FS}}$ 
is the Fubini-Study metric in Corollary \ref{C:Lp3}. In this case
the class $PSH(\mathbb{P}^n,\omega_{_\mathrm{FS}})$ is in 
one-to-one correspondence with the Lelong class of psh functions 
on $\mathbb{C}^n$ of logarithmic growth, and the sections in
$H^0(\mathbb{P}^n,\mathcal{O}(p))$ can be identified to 
polynomials of degree $\leq p$ on $\mathbb{C}^n$
(see e.g., \cite[Section 5]{BL13}). Therefore 
Corollary \ref{C:Lp3} yields a general equidistribution result for
the zeros of a random sequence of polynomials on 
$\mathbb{C}^n$. For related results see \cite{Ba13}, \cite{BL13} 
and references therein.
\end{Remark}

\smallskip

We consider now approximation of metrics with conic singularities
\cite{Do12,Ti96}.
Let $X$ be a Fano manifold, that is, the anti-canonical line bundle
$K_X^{-1}$ is ample. Fix a Hermitian metric $h_0$ on $K_X^{-1}$, 
such that $\omega:=c_1(K_X^{-1},h_0)$ is a K\"ahler metric.
Recall that Hermitian metrics on
$K_X^{-1}$ can be identified to volume forms on $X$.
Let $D$ be a smooth divisor in the linear system defined by 
$K_X^{-\ell}$, so there
exists a section $s\in H^0(X,K_X^{-\ell})$ with
$D=\operatorname{Div}(s)$.
We fix a smooth metric $h$ on the bundle $\cO_X(D)$. 
Let $\beta\in[0,1)$.

A K\"ahler metric $\widehat\omega$ on $X$ with cone angle 
$2\pi(1-\beta)$ along $D$ is a current $\widehat\omega\in c_1(X)$ 
such that 
$\widehat\omega=\omega_\varphi=\omega+dd^c\varphi$
where $\varphi=\psi+|s|_h^{2-2\beta}\in PSH(X,\omega)$ and
$\psi\in\cC^\infty(X)\cap PSH(X,\omega)$. 
In a neigbourhood of a point of $D$
where $D$ is given by $z_1=0$ the metric $\widehat\omega$
is equivalent
to the cone metric $\frac{i}{2}(|z_1|^{-2\beta}dz_1\wedge 
d\overline{z}_1+
\sum_{j=2}^ndz_j\wedge d\overline{z}_j)$.
The Riemannian metric determined by $\widehat\omega$ 
has conic singularity along
$D$ of conic angle $2\pi(1-\beta)$. 

The metric $\widehat\omega$ defines a 
singular metric $h_{\widehat\omega}$ on $K_X^{-1}$ with 
curvature current
$\ric_{\,\widehat\omega}:=c_1(K_X^{-1},h_{\widehat\omega})$.
The metric $\widehat\omega$ is called K\"ahler-Einstein with conic singularities
of cone angle $2\pi(1-\beta)$ along $D$ if $\ric_{\,\widehat\omega}=(1-\ell\beta)
\widehat\omega+\beta[D]$, where
$[D]$ is the current of integration on $D$.

\begin{Corollary}\label{C:Lp4}
Let $X$ be a Fano manifold and let $\widehat\omega$ be
a K\"ahler metric
with cone angle $\beta$.
Let $h_p$ be the metric on $K_X^{-p}$ 
constructed as in Corollary \ref{C:Lp2} using the metric 
$h=h_{\widehat\omega}$ on $K_X^{-1}$.
Let  $\gamma_p\,$, ${\mathcal S}^p$ be the Fubini-Study current 
and the unit sphere associated 
to the space $H^0_{(2)}(X,K_X^{-p},h_p)$. Then 
$\frac1p\gamma_p\to\ric_{\,\widehat\omega}$ and 
$\frac{1}{p}\big[\sigma_p=0\big]\to\ric_{\,\widehat\omega}$ 
as $p\to\infty$ weakly on $X$, for $\lambda_\infty$-a.e.\ sequence 
$\{\sigma_p\}_{p\geq1}\in{\mathcal S}_\infty$. 
If $\widehat\omega$ is a K\"ahler current, then we can take 
$h_p=h_{\widehat\omega}^p$ above. 
\end{Corollary}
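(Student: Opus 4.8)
The plan is to obtain the statement as a direct application of Corollary~\ref{C:Lp2}, with Corollary~\ref{C:Lp1} covering the final assertion. First I would set $L=K_X^{-1}$, take $h=h_{\widehat\omega}$, and let $h_0$ be the fixed smooth metric on $K_X^{-1}$ with $c_1(K_X^{-1},h_0)=\omega$. Since by definition $c_1(K_X^{-1},h_{\widehat\omega})=\ric_{\,\widehat\omega}$, the conclusion of Corollary~\ref{C:Lp2} reads exactly $\tfrac1p\gamma_p\to\ric_{\,\widehat\omega}$ and $\tfrac1p[\sigma_p=0]\to\ric_{\,\widehat\omega}$ for $\lambda_\infty$-a.e.\ sequence, provided the two hypotheses of that corollary hold. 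The second is immediate, since $h_0$ is smooth with $c_1(K_X^{-1},h_0)=\omega\geq\varepsilon\omega$ for $\varepsilon=1$.

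Thus the real content is the first hypothesis: that $h_{\widehat\omega}$ is a genuine singular Hermitian metric and that $c_1(K_X^{-1},h_{\widehat\omega})=\ric_{\,\widehat\omega}\geq0$ as a current. I would make $h_{\widehat\omega}$ explicit as the metric induced on $K_X^{-1}=\det T^{1,0}X$ by the volume form $\widehat\omega^n/n!$: in a local holomorphic frame its weight equals $-\tfrac12\log$ of the density of $\widehat\omega^n/n!$, up to a smooth term. Using $\widehat\omega=\omega+dd^c\varphi$ with $\varphi=\psi+|s|_h^{2-2\beta}$ and $D=\{s=0\}$, the conic model gives $\widehat\omega^n/n!\sim|s|_h^{-2\beta}\cdot(\text{smooth positive})$ near $D$, so this weight behaves like $\beta\log|s|_h+(\text{smooth})$. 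By the Lelong--Poincar\'e formula the curvature current $\ric_{\,\widehat\omega}$ then acquires along $D$ the nonnegative singular contribution $\beta[D]$, while on $X\setminus D$ it coincides with the ordinary Ricci form of $\widehat\omega$. Its global positivity $\ric_{\,\widehat\omega}\geq0$ is guaranteed by the Einstein relation
\[
\ric_{\,\widehat\omega}=(1-\ell\beta)\,\widehat\omega+\beta[D]\,,
\]
a sum of nonnegative currents once $1-\ell\beta\geq0$; in particular the local weights of $h_{\widehat\omega}$ are psh and extend across $D$, so $h_{\widehat\omega}$ is a bona fide singular metric with nonnegative curvature.

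With both hypotheses in hand, Corollary~\ref{C:Lp2} applies with $L^p=K_X^{-p}$ and $h_p=h_{\widehat\omega}^{\,p-n_p}\otimes h_0^{\,n_p}$ for any $\{n_p\}$ as in \eqref{n_p}: the twist by $h_0^{n_p}$ restores the strict positivity $c_1(K_X^{-p},h_p)\geq\varepsilon n_p\omega$ demanded in (B), while $n_p/p\to0$ forces $\tfrac1p\,c_1(K_X^{-p},h_p)\to\ric_{\,\widehat\omega}$, whence the asserted limits. For the last sentence, if $\ric_{\,\widehat\omega}$ is a K\"ahler current, i.e.\ $\ric_{\,\widehat\omega}\geq\varepsilon\omega$ for some $\varepsilon>0$ (which by the Einstein relation holds when $\widehat\omega$ is a K\"ahler current and $1-\ell\beta>0$), then $(K_X^{-1},h_{\widehat\omega})$ already satisfies the strict-positivity hypothesis of Corollary~\ref{C:Lp1}, and I would invoke that corollary directly, dropping the twist and taking $h_p=h_{\widehat\omega}^p$.

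The step I expect to be the main obstacle is the positivity and integrability of $\ric_{\,\widehat\omega}$, i.e.\ verifying that the weight $-\tfrac12\log(\widehat\omega^n/n!)$ of $h_{\widehat\omega}$ is psh across the conic divisor $D$ and that the curvature current picks up no negative distributional term from the singularity of $\widehat\omega$ along $D$. Away from $D$ this is the classical identification of $c_1(K_X^{-1},h_{\widehat\omega})$ with the Ricci form; the delicate point is the behaviour at $D$, controlled by the precise conic normalization $\varphi=\psi+|s|_h^{2-2\beta}$ and the Einstein equation, together with the standard fact that a psh potential bounded above near a divisor extends in a unique psh way across it.
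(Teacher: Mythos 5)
Your overall route coincides with the paper's: Corollary \ref{C:Lp4} carries no separate proof there precisely because it is meant as a direct application of Corollary \ref{C:Lp2}, with Corollary \ref{C:Lp1} invoked for the final sentence. Your reading of $h_{\widehat\omega}$ as the metric induced on $K_X^{-1}=\det T^{1,0}X$ by the volume form $\widehat\omega^n/n!$ matches the paper's convention (``Hermitian metrics on $K_X^{-1}$ can be identified to volume forms on $X$''), and your local computation --- weight $=\beta\log|s|_h+(\text{smooth})$ near $D$, hence the contribution $\beta[D]$ to $\ric_{\,\widehat\omega}$ by Lelong--Poincar\'e --- is correct and consistent with the normalization of the Einstein equation as written in the paper.

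The gap is in your verification of the hypothesis $c_1(K_X^{-1},h_{\widehat\omega})=\ric_{\,\widehat\omega}\geq0$ of Corollary \ref{C:Lp2}. You derive it from the relation $\ric_{\,\widehat\omega}=(1-\ell\beta)\,\widehat\omega+\beta[D]$, but that relation is the paper's \emph{definition} of a conical K\"ahler--Einstein metric; it is not among the hypotheses of the corollary, which assumes only that $\widehat\omega$ is a K\"ahler metric with cone angle along $D$. For such a general cone metric the Ricci current need not be nonnegative: the defining conditions ($\varphi=\psi+|s|_h^{2-2\beta}\in PSH(X,\omega)$, $\psi$ smooth and $\omega$-psh) constrain $\widehat\omega$ itself, not the quantity $-\tfrac12\log\det g$ whose complex Hessian is the Ricci form, and already for $\beta=0$ the definition allows smooth K\"ahler metrics in $c_1(X)$ whose Ricci form is negative in some directions. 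So your argument proves the statement only under the additional hypotheses ``$\widehat\omega$ conical K\"ahler--Einstein and $\ell\beta\leq1$.'' The same issue affects the last sentence: without the Einstein relation, $\widehat\omega\geq\varepsilon\omega$ gives no lower bound on $\ric_{\,\widehat\omega}$, so Corollary \ref{C:Lp1} cannot be invoked for $h_p=h_{\widehat\omega}^p$; with it, you additionally need $1-\ell\beta>0$. To be fair, this positivity is tacit in the paper as well --- the phrase ``constructed as in Corollary \ref{C:Lp2}'' presupposes that $h=h_{\widehat\omega}$ satisfies that corollary's hypotheses --- but since your write-up explicitly undertakes to verify it, the appeal to an equation that has not been assumed is a genuine hole. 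The honest fix is either to add $\ric_{\,\widehat\omega}\geq0$ (respectively, $\ric_{\,\widehat\omega}$ a K\"ahler current, for the last sentence) as a standing assumption, or to state the result for conical K\"ahler--Einstein metrics with $\ell\beta\leq1$.
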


\begin{Remark}
If $\widehat\omega$ is a K\"ahler current, 
\cite[Theorem\,1.8]{HsM11} shows
that the Bergman kernel of $H^0_{(2)}(X,K_X^{-p},
h_{\widehat\omega}^p)$ has a full 
asymptotic expansion in powers $p^{n-j}$, $j=0,1,\ldots$\,, and 
the Fubini-Study forms converge to $\ric_{\,\widehat\omega}$ 
in the $\cC^\infty$-topology on compact sets of $X\setminus D$. 
\end{Remark}


\subsection{Tensor products of powers of several line bundles}\label{SS:appl3} 
Another typical application of Theorem \ref{T:mt1} is the following.

\begin{Corollary}\label{C:prod}
Let $(X,\omega)$ be a compact normal K\"ahler space. 
Assume that $(F_j,h^{F_j})$, $1\leq j\leq k$, are singular 
Hermitian holomorphic line bundles with $c_1(F_j,h^{F_j})\geq0$ 
and $c_1(F_1,h^{F_1})\geq\varepsilon\omega$, for some 
$\varepsilon>0$. Let $T=\sum_{j=1}^k r_j\,c_1(F_j,h^{F_j})$, 
where $r_j\geq0$, and let $\{m_{j,p}\}_p\,$, $1\leq j\leq k$,
be sequences of natural numbers such that
\[
m_{1,p}\to\infty\,,\;\frac{m_{j,p}}{p}\to r_j\,,\;1\leq j\leq k\,,\;
\text{ as } p\to\infty\,.
\]
Let $P_p\,$, $\gamma_p\,$, ${\mathcal S}^p$ be the 
Bergman kernel function, Fubini-Study current, and respectively 
the unit sphere, associated to $H^0_{(2)}(X,L_p)$, where 
\[
L_p=F^{m_{1,p}}_1\otimes\ldots\otimes F^{m_{k,p}}_k\,,\;\,
h_p=(h^{F_1})^{m_{1,p}}\otimes\ldots
\otimes(h^{F_k})^{m_{k,p}}\,.
\]  
Then, as $p\to\infty$, we have $\frac1p\,\log P_p\to 0$ in
$L^1(X,\omega^n)$, $\frac1p\,\gamma_p\to T$ and 
$\frac{1}{p}\big[\sigma_p=0\big]\to T$ in the weak sense of
currents on $X$, for $\lambda_\infty$-a.e.\ sequence 
$\{\sigma_p\}_{p\geq1}\in{\mathcal S}_\infty$.
\end{Corollary}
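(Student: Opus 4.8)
The plan is to verify that $(X,\omega)$ and the bundles $(L_p,h_p)$ satisfy assumptions (A)--(B), so that Theorems \ref{T:mt1} and \ref{T:a1} apply directly. First I would compute the curvature current. Additivity of Chern curvatures under tensor products and powers gives
\[
c_1(L_p,h_p)=\sum_{j=1}^k m_{j,p}\,c_1(F_j,h^{F_j}).
\]
Since each $c_1(F_j,h^{F_j})\ge0$ and $c_1(F_1,h^{F_1})\ge\varepsilon\omega$, discarding all but the first summand yields $c_1(L_p,h_p)\ge m_{1,p}\,c_1(F_1,h^{F_1})\ge\varepsilon\,m_{1,p}\,\omega$, so \eqref{e:pc} holds with $a_p:=\varepsilon\,m_{1,p}\to\infty$. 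Being a nonnegative combination of curvature currents of singular metrics on the normal space $X$, each $c_1(L_p,h_p)$ lies in $\cT(X)$.

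Next I would analyze $A_p$ and establish the domination hypothesis. Writing $\alpha_j:=\int_X c_1(F_j,h^{F_j})\wedge\omega^{n-1}\ge0$, linearity gives $A_p=\sum_{j=1}^k m_{j,p}\alpha_j$, and from $m_{j,p}/p\to r_j$ we obtain $A_p/p\to\sum_{j=1}^k r_j\alpha_j=\|T\|$. Here $\alpha_1>0$ because $c_1(F_1,h^{F_1})\ge\varepsilon\omega$, so $A_p\to\infty$; moreover, when $T\neq0$ (equivalently $\|T\|>0$) one has $A_p\asymp p$ and hence $\sum_p A_p^{-2}<\infty$. For the domination condition \eqref{e:domin0}, needed only if $X_{\rm sing}\ne\emptyset$, I would exploit that all summands in $A_p$ are nonnegative: $A_p\ge m_{j,p}\alpha_j$ for each $j$, so $m_{j,p}\le A_p/\alpha_j$ whenever $\alpha_j>0$, while an index with $\alpha_j=0$ forces $c_1(F_j,h^{F_j})=0$ (a positive closed $(1,1)$-current of zero mass vanishes) and contributes nothing. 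Therefore, for all $p\ge1$,
\[
c_1(L_p,h_p)\le A_p\,T_0,\qquad T_0:=\sum_{j:\,\alpha_j>0}\frac{1}{\alpha_j}\,c_1(F_j,h^{F_j})\in\cT(X),
\]
which is precisely \eqref{e:domin0}.

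With (A)--(B) verified, I would invoke the main theorems. Theorem \ref{T:mt1}(i) gives $\frac1{A_p}\log P_p\to0$ in $L^1(X,\omega^n)$; multiplying by the bounded scalar sequence $A_p/p\to\|T\|$ yields $\frac1p\log P_p\to0$ in $L^1(X,\omega^n)$. For the currents, $m_{j,p}/p\to r_j$ gives $\frac1p c_1(L_p,h_p)\to T$ weakly, while Theorem \ref{T:mt1}(ii) gives $\frac1{A_p}(\gamma_p-c_1(L_p,h_p))\to0$; rescaling the latter by $A_p/p$ and adding the two shows $\frac1p\gamma_p\to T$. Finally, since $\sum_p A_p^{-2}<\infty$, Theorem \ref{T:a1} applies and gives $\frac1p[\sigma_p=0]\to T$ weakly for $\lambda_\infty$-a.e.\ sequence $\{\sigma_p\}_{p\ge1}$.

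The only genuinely delicate step is the domination condition \eqref{e:domin0}: one must produce a single $T_0\in\cT(X)$ dominating $c_1(L_p,h_p)/A_p$ uniformly in $p$, and the argument rests on the simple but essential fact that each coefficient $m_{j,p}$ is controlled by $A_p$ precisely because every $c_1(F_j,h^{F_j})$ is nonnegative. When $X$ is smooth this step is unnecessary and one may appeal to Theorem \ref{T:mt2} in place of Theorem \ref{T:mt1}.
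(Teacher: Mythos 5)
Your reduction to assumptions (A)--(B) is correct, and your verification of \eqref{e:domin0} is in fact cleaner than the paper's own argument: the paper only records $c_1(L_p,h_p)\leq p\,T_0$ with $T_0=\sum_{j=1}^k(r_j+1)\,c_1(F_j,h^{F_j})$, which matches \eqref{e:domin0} only when $p\lesssim A_p$, whereas your bound $m_{j,p}\leq A_p/\alpha_j$ for the indices with $\alpha_j>0$ gives $c_1(L_p,h_p)\leq A_pT_0$ directly, even when $A_p=o(p)$. Your treatment of the Bergman kernel and Fubini--Study statements (rescaling the conclusions of Theorem \ref{T:mt1} by the bounded factor $A_p/p$) is exactly the paper's argument.

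There is, however, a genuine gap in the last step. You prove $\sum_p A_p^{-2}<\infty$ only under the assumption $T\neq0$, but then invoke Theorem \ref{T:a1} unconditionally. The hypotheses of the corollary allow $T=0$: all $r_j$ may vanish, since only $m_{1,p}\to\infty$ is required, not $r_1>0$. For instance, $k=1$ and $m_{1,p}=\lceil\sqrt{p}\,\rceil$ gives $A_p\asymp\sqrt{p}$, so $\sum_p A_p^{-2}=\infty$ and Theorem \ref{T:a1} cannot be applied as a black box; yet the corollary still asserts $\frac1p[\sigma_p=0]\to0$ almost surely. The paper closes this case not by citing Theorem \ref{T:a1} but by rerunning its proof with all currents normalized by $p$ instead of $A_p$: for a fixed test form $\theta$, the variance estimate of \cite{ShZ99} is uniform in $p$, so $Y_p(\sigma)=\frac1p\big\langle[\sigma_p=0]-\gamma_p,\theta\big\rangle$ satisfies
\[
\int_{{\mathcal S}_\infty}|Y_p|^2\,d\lambda_\infty\leq\frac{AC_\theta}{p^2}\,,
\]
and $\sum_p p^{-2}<\infty$ gives $Y_p\to0$ for $\lambda_\infty$-a.e.\ sequence; combined with the mass bound $\frac1p\int_X[\sigma_p=0]\wedge\omega^{n-1}=A_p/p\lesssim1$ (needed to pass from a countable family of test forms to weak convergence) and with $\frac1p\gamma_p\to T$, this yields the conclusion in all cases. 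Your proof becomes complete once you add this modification, or else restrict the statement to $T\neq0$.
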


\begin{proof} 
We may assume that $m_{j,p}/p<r_j+1$ for all $1\leq j\leq k$, 
$p\geq1$, so 
\begin{eqnarray*}
c_1(L_p,h_p)&=&\sum_{j=1}^k m_{j,p}\,c_1(F_j,h^{F_j})
\geq\varepsilon m_{1,p}\,\omega\,,\\
c_1(L_p,h_p)&\leq& p\,T_0\,,\;\text{ where }
T_0=\sum_{j=1}^k (r_j+1)\,c_1(F_j,h^{F_j})\,.
\end{eqnarray*}
Moreover $c_1(L_p,h_p)/p\to T$ as $p\to\infty$. Note that 
\[\frac{A_p}{p}=\frac1p\;\sum_{j=1}^k m_{j,p}\,
\|c_1(F_j,h^{F_j})\|\to\sum_{j=1}^k r_j\|c_1(F_j,h^{F_j})\|\,.\]
The conclusion follows from Theorems \ref{T:mt1} and \ref{T:a1}, 
since 
$$\frac{\gamma_p}{p}=\frac{A_p}{p}
\frac{1}{A_p}(\,\gamma_p-c_1(L_p,h_p))+\frac{1}{p}\,
c_1(L_p,h_p)\,.$$
The fact that $\frac{1}{p}\big[\sigma_p=0\big]\to T$ weakly on $X$, for $\lambda_\infty$-a.e.\ sequence $\{\sigma_p\}_{p\geq1}\in{\mathcal S}_\infty$, follows as in the proof of Theorem \ref{T:a1}, where we normalize all currents dividing by $p$, by using that $A_p\lesssim p$ and $\sum_{p=1}^\infty p^{-2}$ is finite.
\end{proof}

\medskip

\par Let us remark that Theorem \ref{T:a2} holds in the context of 
Corollaries \ref{C:Lp1}, \ref{C:Lp2}, \ref{C:Lp3}, \ref{C:prod}, 
since the condition \eqref{e:logdp} holds in all of these situations. 

\subsection{Covering manifolds}
Let $(X,\omega)$ be a compact K\"ahler manifold of dimension $n$ 
and $(L_p,h_p)$, $p\geq1$, be a sequence of singular Hermitian 
holomorphic line bundles on $X$ satisfying condition \eqref{e:pc}. Recall that $A_p=\int_Xc_1(L_p,h_p)\wedge\omega^{n-1}$. Let $q:\widetilde X\to X$ be a (paracompact) Galois covering of 
$X$, where $q$
is the canonical projection. Let us denote by 
$\widetilde\omega=q^*\omega$, 
$\widetilde L_p=q^*(L_p)$ and by $\widetilde h_p$
the metric on $\widetilde L_p$ which is
the pull-back of the metric $h_p$.   We let 
$H^0_{(2)}(\widetilde X,\widetilde L_p)$ 
be the Bergman space of $L^2$-holomorphic sections of 
$\widetilde L_p$ relative to the metric 
$\widetilde h_p$ and the volume form $\widetilde\omega^n/n!$ on 
$\widetilde X$, defined as
in \eqref{e:bs}, endowed with the obvious inner product.
We define the Bergman kernel function $\widetilde P_p$ and 
Fubini-Study
currents $\widetilde\gamma_p$ associated to 
$H^0_{(2)}(\widetilde X,\widetilde L_p)$ 
as in \eqref{e:BFS1}. In this context, $d_p\in\N\cup\{\infty\}$, 
and these objects are well defined even for $d_p=\infty$, 
see \cite[Lemmas\,3.1-3.2]{CM11}.

Note that $\widetilde\omega$ is a complete K\"ahler metric on $\widetilde X$ and 
$c_1(\widetilde L_p,\widetilde h_p)=q^*c_1(L_p,h_p)\geq a_p\widetilde\omega$. Since $\ric_\omega\geq-B\omega$ holds on $X$, for some constant $B>0$, and $q$ is a local biholomorphism, we have $\ric_{\widetilde\omega}\geq-B\widetilde\omega$ on $\widetilde X$. Moreover, if $K\Subset\widetilde X$ there exists a constant $C_K>0$ such that
$$\int_Kc_1(\widetilde L_p,\widetilde h_p)\wedge\widetilde\omega^{n-1}\leq C_KA_p\,.$$

A straightforward adaptation of the proofs given above yields:
\begin{equation}
\begin{split}
&\frac{1}{A_p}\,\log \widetilde P_p\to 0
\:\: \text{in $L^1_{loc}(\widetilde X,\widetilde\omega^n)$}\,,\\
&\frac{1}{A_p}\,(\widetilde\gamma_p
-c_1(\widetilde L_p,\widetilde h_p))\to 0
\:\: \text{weakly on $\widetilde X$}\,.
\end{split}
\end{equation}
Assume moreover that the metrics $h_p$ are of class $\cC^3$ 
and condition \eqref{e:3d} is fulfilled, that is, 
$\varepsilon_p:=\|h_p\|_3^{1/3}a_p^{-1/2}\to0$ as
$p\to\infty$\,. Then there exists $p_0\in\mathbb N$ such that 
for all $p>p_0$ 
\begin{equation}\label{e:Bexp01}
\left|\widetilde P_p\,\frac{\widetilde\omega^n}{
c_1(\widetilde L_p,\widetilde h_p)^n}-1\right|
\leq C\varepsilon_p^{2/3}\:\:\text{ on $\widetilde X$},
\end{equation}
where $C>0$ is a constant depending only on $(X,\omega)$. 
Estimates \eqref{e:Bexp00} and \eqref{e:Bexp01} show  
that the asymptotics of the Bergman kernels 
$\widetilde P_p(\widetilde x)$ and $P_p(x)$, $x=q(\widetilde x)$, 
are the same. For $(L_p,h_p)=(L^p,h^p)$ with a smooth metric 
$h$ satisfying $c_1(L,h)\geq a\omega$ this follows from 
\cite[Theorem\,6.1.4]{MM07}.


\end{document}